\documentclass[a4paper,reqno]{amsart}
\usepackage[utf8]{inputenc}
\usepackage{amsmath,amssymb,amsthm,amsfonts}
\usepackage{bbm}
\usepackage{euscript}
\usepackage{enumitem}
\usepackage{nicefrac}
\usepackage{mathtools}	

\theoremstyle{plain}
\newtheorem{theorem}{Theorem}
\newtheorem{lemma}[theorem]{Lemma}
\newtheorem{corollary}[theorem]{Corollary}

\newtheorem{proposition}[theorem]{Proposition}
\theoremstyle{definition}

\newtheorem{remark}[theorem]{Remark}
\newtheorem{example}[theorem]{Example}
\numberwithin{theorem}{section}
\numberwithin{equation}{section}

\newcommand{\mbR}{{\mathbb R}}
\newcommand{\mbN}{{\mathbb N}}
\newcommand{\mbQ}{{\mathbb Q}}
\newcommand{\mbZ}{{\mathbb Z}}
%

\newcommand{\cF}{{\mathcal F}}

\newcommand{\ind}{\mathbbm{1}}		

\newcommand{\drift}{a}
\newcommand{\sign}{\mathop{\rm sign}}

\newcommand{\ve}{\varepsilon}

\renewcommand{\Pr}{{\mathbb{P}}} 		
\newcommand{\Exp}{{\mathbb{E}}} 		
\newcommand{\Var}{\mathrm{Var}}		
\renewcommand{\leq}{\leqslant}

\renewcommand{\geq}{\geqslant}
\renewcommand{\ge}{\geqslant}
\DeclareMathOperator{\wlim}{wlim}
\newcommand{\from}{\colon}		
\newcommand{\Lip}{{\mathrm{Lip}}}
\newcommand{\nqquad}{\hspace{-2em}}

\title{The zero-noise limit of SDEs with \(L^\infty\) drift}

\author[U. S. Fjordholm]{Ulrik Skre Fjordholm}
\author[M. Musch]{Markus Musch}
\address{Department of Mathematics, University of Oslo, PO Box 1053 Blindern, N-316 Oslo, Norway}
\author[A. Pilipenko]{Andrey Pilipenko}
\address{Institute of Mathematics,  National Academy of Sciences of Ukraine, Tereshchenkivska str. 3, 01601, Kiev, Ukraine}

\begin{document}

\begin{abstract}
We study the zero-noise limit for autonomous, one-dimensional ordinary differential equations with discontinuous right-hand sides. Although the deterministic equation might have infinitely many solutions, we show, under rather general conditions, that the sequence of stochastically perturbed solutions converges to a unique distribution on classical solutions of the deterministic equation. We provide several tools for computing this limit distribution.
\end{abstract}

\maketitle

\section{Introduction}
 Consider a  scalar, autonomous ordinary differential equation (ODE) of the form
\begin{equation}\label{eq:ode}
 \begin{split}
  \frac{dX}{dt}(t) &= \drift(X(t)) \qquad \text{for } t > 0, \\
  X(0) &= 0
 \end{split}
\end{equation}
where \( \drift\from\mbR \rightarrow \mbR \) is Borel measurable. (The initial data $X(0)=0$ can be translated to an arbitrary point $x_0\in\mbR$, if needed.)
If the drift $a$ is non-smooth then uniqueness of solutions might fail --- this is the \emph{Peano phenomenon}. To distinguish physically reasonable solutions from non-physical ones, we add stochastic noise to the equation, with the aim of letting the noise go to zero. Thus, we consider a stochastic differential equation
\begin{equation}\label{eq:ode_pert} %
\begin{split}
dX_\ve(t) &= \drift(X_\ve(t)) dt + \ve dW(t), \\
X_\ve(0) &= 0.
\end{split}
\end{equation}
where \( W(t) \) is a one-dimensional Brownian motion on a given probability space \( (\Omega, \cF, \Pr )\), and \( \ve > 0 \). By the Zvonkin--Veretennikov theorem \cite{Veretennikov1981,Zvonkin1974}, equation \eqref{eq:ode_pert} has a unique  strong solution.

In this paper we consider the following problem:
\begin{quotation}
\emph{Identify the limit $\lim_{\ve\to0} X_\ve$, and show that it satisfies \eqref{eq:ode}.}
\end{quotation}
Somewhat informally, the challenges are:
\begin{itemize}
\item determining whether the sequence $\{X_\ve\}_\ve$ (or a subsequence) converges, and in what sense;
\item identifying the limit(s), either by a closed form expression or some defining property;
\item proving that the limit solves \eqref{eq:ode} by passing to the limit in the (possibly discontinuous) term $a(X_\ve)$.
\end{itemize}

The problem originated in the 1981 paper by Veretennikov \cite{Veretennikov1981b}, and was treated extensively in the 1982 paper by Bafico and Baldi \cite{BaficoBaldi1982}. Only little work has been done on this problem since then, despite its great interest. The original work of Bafico and Baldi dealt with the Peano phenomenon for an autonomous ordinary differential equation. They considered continuous drifts which are zero at some point and are non-Lipschitz continuous on at least one side of the origin. In their paper they show that the $\ve\to0$ limit of the probability measure that represents the solution of the stochastic equation is concentrated on at most two trajectories. Further, they compute explicitly some limit probability measures for specific drifts. Unfortunately, since the result of Bafico and Baldi relies on the direct computation of the solution of an elliptic PDE, it only works in one dimension. In one dimension this elliptic PDE reduces to a second-order boundary value problem for which an explicit solution can be computed. Therefore, there is little hope that this approach will also work in higher dimensions.

The only other work that is known to us dating back to the previous century is the paper by Mathieu from 1994 \cite{Mathieu1994}. In 2001 Grandinaru, Herrmann and Roynette published a paper \cite{GradinaruHerrmannRoynette2001} which showed some of the results of Bafico and Baldi using a large deviations approach. Herrmann did some more work on small-noise limits later on together with Tugaut \cite{HerrmannTugaut2010, HerrmannTugaut2012, HerrmannTugaut2014}.

Yet another approach to Bafico and Baldi's original problem was presented by Delarue and Flandoli in \cite{DelarueFlandoli2014}. They apply a careful argument based on exit times. Noteworthy it also works in arbitrary dimension but with a very specific right-hand side, in contrast to the original assumption of a general continuous function; see also Trevisian \cite{Trevisian13}.
We also point out the recent paper by Delarue and Maurelli  \cite{DelarueMaurelli2020}, where multidimensional gradient dynamics with H\"older type coefficients was perturbed by a small Wiener noise.

The 2008 paper by Buckdahn, Ouknine and Quincampoix \cite{BuckdahnOuknineQuincampoix2008} shows that the the zero noise limit is concentrated on the set of all Filippov solutions of \eqref{eq:ode}. Since this set is potentially very large, this result is of limited use to us.

Even less work was done for zero noise limits with respect to partial differential equations. To our best knowledge the only paper published so far is Attanasio and Flandoli's note on the linear transport equation \cite{AttanasioFlandoli2009}.

A new approach was proposed by Pilipenko and Proske when the drift in  \eqref{eq:ode} has H\"older-type asymptotics in a neighborhood of $x=0$ and the perturbation is a self-similar noise \cite{PilipenkoProske2015}. They used space-time scaling and reduce a solution of the small-noise problem to a study of long time behaviour of a stochastic differential equation with a {\it fixed} noise.  This approach  can be generalized to multidimensional case and multiplicative Levy-noise perturbations \cite{PilipenkoProske2018, KulikPilipenko2020, PavlyukevichPilipenko2020, PilipenkoProske2021}.


\subsection{Uniqueness of classical solutions}
If the drift $a=a(x)$ is continuous then the question of existence and uniqueness of solutions of \eqref{eq:ode} is well established. If $a$ is {continuous} then it's known since Peano that there always exists at least one solution (at least for small times). Binding \cite{Binding1979} found that the solution is unique {if and only if} $a$ satisfies a so-called Osgood condition at all zeros $x_0$ of $a$:
\begin{equation}\label{eq:osgood_cond}
	\int_{x_0-\delta}^{x_0} \frac{1}{a(z)\wedge0}\,dz= -\infty,\qquad
\int_{x_0}^{x_0+\delta} \frac{1}{a(z)\vee0}\,dz = +\infty
\end{equation}
for all $\delta\in(0,\delta_0)$ for some $\delta_0>0$. (Here and in the remainder we denote \(\alpha \wedge \beta\coloneqq\min(\alpha,\beta)\) and $\alpha\vee\beta\coloneqq\max(\alpha,\beta)$.) The unique solution starting at $x$ is then given by
\begin{equation}\label{eq:deterministicsolution}
X(t;x) = \begin{cases}
	x & \text{if } a(x)=0 \\
	A^{-1}(t) & \text{if } a(x)\neq0
\end{cases}
\end{equation}
(at least for small $t$), where $A(y)\coloneqq\int_{x}^y 1/\drift(z)\, dz$ and $A^{-1}$ is its inverse function.

If $a$ is discontinuous --- say, $a\in L^\infty(\mbR)$ --- then the question of existence and uniqueness is much more delicate. The paper \cite{Fjordholm2018} gives necessary and sufficient conditions for the uniqueness of \emph{Filippov solutions} of \eqref{eq:ode}. We remark here that the extension to Filippov solutions might lead to non-uniqueness, even when the classical solution is unique. To see this, let $E\subset\mbR$ be measure-dense, i.e.~a set for which both $U\cap E$ and $U\setminus E$ have positive Lebesgue measure for any nonempty, open set $U\subset\mbR$ (see \cite{Rud83} for the construction of such a set), and let $a=1+\ind_E$. Then \eqref{eq:deterministicsolution} is the unique classical solution for any starting point $x\in\mbR$, whereas any function satisfying $\frac{d}{dt}X(t)\in[1,2]$ for a.e.~$t>0$ will be a Filippov solution. We will show that even in cases such as this one, the stochastically perturbed solutions converge to the classical solution, and not just any Filippov solution, as was shown in  \cite{BuckdahnOuknineQuincampoix2008}.


\subsection{Main result}
We aim to prove that the distribution of solutions $X_\ve$ of \eqref{eq:ode_pert} converges to a distribution concentrated on either a single solution of the deterministic equation \eqref{eq:ode}, or two ``extremal'' solutions. Based on the discussion in the previous section, we can divide the argument into cases depending on whether $a$ is positive, negative or changes sign in a neighbourhood, and in each case, whether an Osgood-type condition such as \eqref{eq:osgood_cond} holds. The case of negative drift is clearly analogous to a positive drift, so we will merely state the results for negative drift, without proof.

Under the sole assumption $a\in L^\infty(\mbR)$, the sequence $\{X_\ve\}_\ve$ is weakly relatively compact in $C([0,T])$, for any $T>0$. (Indeed, by \eqref{eq:ode_pert}, $X_\ve-\ve W$ is uniformly Lipschitz, and $\ve W\overset{P}{\to}0$ as $\ve\to0$. See e.g.~\cite{Billingsley1999} for the full argument.) Hence, the problems are to characterize the distributional limit of any convergent subsequence, to determine whether the entire sequence converges (i.e., to determine whether the limit is unique), and to determine whether the sense of convergence can be strengthened.

Without loss of generality we will assume that the process starts at $x=0$. If $a(0)=0$ but $a$ does \textit{not} satisfy the Osgood condition \eqref{eq:osgood_cond} at $x=0$, then both
$\psi_-$ and $\psi_+$ are classical solutions
of \eqref{eq:ode} (along with infinitely many other solutions), where
\begin{equation}\label{eq:maximalsolutions}
\psi_\pm(t) \coloneqq A_\pm^{-1}(t), \qquad \text{where } A_\pm(x) \coloneqq \int_0^x \frac{1}{a(z)}\,dz \text{ for } x\in\mbR_\pm.
\end{equation}
Generally, the functions $\psi_\pm$ are defined in a neighborhood of 0. We have assumed that $a$ is bounded, so
$\psi_\pm$ cannot blow up in finite time, but they can reach  singular points $R_\pm$ where $A_\pm$ blow up. If $t_\pm\in(0,\infty]$ are the times when $\psi_\pm(t_\pm)=R_\pm$ then we set $\psi_\pm(t)\equiv R_\pm$ for all $t\geq t_\pm$. We aim to prove that the distribution of $X_\ve$ converges to a distribution concentrated on the two solutions $\psi_-,\ \psi_+$, and to determine the weighting of these two solutions.

\begin{theorem}\label{thm:ZeroNoisePositiveDrift111}
Let $a\in L^\infty(\mbR)$ {satisfy $a\geq 0$} a.e.~in $(-\delta_0, \delta_0)$ for some $\delta_0>0$, and
\begin{equation}\label{eq:osgoodOnesided}
\int_{0}^{\delta_0} \frac{1}{a(z)} dz<\infty.
\end{equation}
Then, for any $T>0$, $X_\ve$ converges uniformly in probability to $\psi_+$:
\begin{equation}\label{eq:C2}
\big\|X_\ve-\psi_+ \big\|_{C([0,T])} \overset{P} \to 0 \qquad\text{as } \ve\to0.
\end{equation}
An analogous result holds for \emph{negative} drifts, with obvious modifications.
\end{theorem}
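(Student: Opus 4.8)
The plan is to use the one-sided sign condition to turn the problem into the convergence of a monotone family, and then to pin down the limit through the integral $A_+$ of \eqref{eq:maximalsolutions}. First I would subtract the noise, setting $Y_\ve:=X_\ve-\ve W$, so that $Y_\ve$ solves the random ODE $\dot Y_\ve(t)=a(Y_\ve(t)+\ve W(t))$ with $Y_\ve(0)=0$. Since $a\ge0$ on $(-\delta_0,\delta_0)$, the path $Y_\ve$ is non-decreasing as long as $X_\ve=Y_\ve+\ve W$ stays in that neighbourhood; in particular $Y_\ve\ge0$, whence $X_\ve\ge-\ve\|W\|_{C([0,T])}$, so already no mass can leak to the left. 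As $\ve W\overset{P}\to0$ uniformly, it suffices to show $\|Y_\ve-\psi_+\|_{C([0,T])}\overset{P}\to0$. Because $Y_\ve$ is monotone and $\psi_+$ is continuous and strictly increasing until it reaches $R_+$, a Pólya-type argument reduces this to the convergence in probability of the hitting times $\sigma_\ve(x):=\inf\{t:Y_\ve(t)\ge x\}$ to $A_+(x)$, for each fixed $x\in(0,\psi_+(T)]$.

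For the hitting times I would change variables $u=Y_\ve(s)$ in $\sigma_\ve(x)=\int_0^{\sigma_\ve(x)}ds$, which yields
\[
\sigma_\ve(x)=\int_0^x\frac{du}{a\big(u+\ve W(\sigma_\ve(u))\big)}+r_\ve(x),
\]
where $r_\ve(x)$ is the time $X_\ve$ spends in the set $\{a=0\}$ up to $\sigma_\ve(x)$. By \eqref{eq:osgoodOnesided} this set is Lebesgue-null inside $(0,\delta_0)$, so $r_\ve$ only registers time at non-positive values and satisfies $r_\ve\overset{P}\to0$ once the process is swept to the right. Comparing the remaining integral with $A_+(x)=\int_0^x du/a(u)$ splits the work into two pieces. \emph{Escape from the origin:} the contribution of a small interval $(0,\rho)$ must be negligible uniformly in $\ve$; here \eqref{eq:osgoodOnesided} is indispensable, as it forces $A_+(\rho)\to0$, while a scale-function computation for \eqref{eq:ode_pert} — whose scale density $\exp(-\tfrac2{\ve^2}\int_0^y a)$ explodes to the left and collapses to the right — shows that the process leaves a neighbourhood of $0$ to the \emph{right} with probability tending to $1$, in a time converging to $A_+(\rho)$. \emph{Tracking:} for levels bounded away from $0$ the time spent by $Y_\ve$ per unit length must converge to $1/a$.

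The hard part will be this tracking step, which is exactly where the $L^\infty$ discontinuity bites: one cannot bound $a(u+\ve W(\sigma_\ve(u)))$ below by a pointwise envelope. For the measure-dense example $a=1+\ind_E$ of the introduction one has $\operatorname{ess\,inf}_{|r|\le\gamma}a(u+r)\equiv1$, which would give the wrong value $\sigma_\ve(x)\to x$ in place of $A_+(x)=\int_0^x(1+\ind_E)^{-1}$. The point is that the noise renders the occupation measure of $X_\ve$ absolutely continuous and forces $a$ to be averaged against Lebesgue measure, rather than sampled along the single curve $u\mapsto u+\ve W(\sigma_\ve(u))$; equivalently, the occupation density (local time) of $X_\ve$ must be shown to converge to $\tfrac1{a(y)}\ind_{[0,\psi_+(t)]}(y)$.

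Quantitatively I would realise this through the speed measure $m_\ve(dy)=\tfrac2{\ve^2}\exp(\tfrac2{\ve^2}\int_0^y a)\,dy$ and the Green's-function representation of mean exit times on a fine partition $0=x_0<\dots<x_N$ of $[0,\psi_+(T)]$, proving that the expected transit time from $x_i$ to $x_{i+1}$ tends to $\int_{x_i}^{x_{i+1}}du/a(u)$ and that each crossing is rightward with overwhelming probability. The strong Markov property then telescopes these estimates, the already-established tightness of $\{X_\ve\}$ converts the mean bounds into convergence in probability, and the continuation of the trajectory past $(-\delta_0,\delta_0)$ is handled by restarting at the regular point at which $\psi_+$ exits the neighbourhood.
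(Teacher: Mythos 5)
Your overall strategy --- reduce to hitting times via monotonicity, compute rightward-exit probabilities and mean transit times through the scale function and Green's function, and telescope with the strong Markov property --- is essentially the paper's, and you correctly isolate the real difficulty (the occupation measure must average $a$ against Lebesgue measure, which is exactly what the speed-measure representation achieves). But there is a genuine gap at the step where you claim that ``the already-established tightness of $\{X_\ve\}_\ve$ converts the mean bounds into convergence in probability.'' It does not. Knowing $\Exp(\tau^{x_{i+1}}_\ve-\tau^{x_i}_\ve)\to\int_{x_i}^{x_{i+1}}a(u)^{-1}du$ for independent non-negative increments gives no concentration: each increment could be near $0$ with high probability and take rare large values that inflate its mean, so the sum could sit well below $A(x)$ with probability close to one --- which is precisely the scenario in which a subsequential limit overshoots $\psi_+$ and follows a faster Filippov solution, the very outcome the theorem must exclude for $a=1+\ind_E$. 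A second ingredient is needed: either a uniform bound on the \emph{second} moments of the transit times (the paper obtains this by rescaling $Y_\ve(t)=\ve^{-2}X_\ve(\ve^2t)$ and comparing with $c_-t+B(t)$, which requires $a\ge c_->0$ and then a chain of comparison arguments to relax strict positivity), or a pathwise upper bound $\bar X_0\le\psi_+$ for every subsequential limit, which the paper extracts by comparing with the regularized drifts $a+\nicefrac1n$ (whose zero-noise limits $A_n^{-1}$ are already known and decrease to $A^{-1}$) and then squeezing $A(x)\le\Exp(\pi^x)\le\liminf_k\Exp(\pi^x_{\ve_k})=A(x)$ via Fatou. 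Your proposal contains neither.

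Two further points are glossed over. First, the hypothesis is $a\ge0$, not $a>0$, on $(-\delta_0,0)$: if $a$ vanishes on a set of positive measure to the left of the origin, the mean-exit-time computation breaks down there (the process can linger in the zero-drift region for a non-negligible expected time, and Lemma \ref{lem:approxidentity} is not applicable since it requires $f>0$ a.e.); the paper needs the separate argument of Section \ref{section:finalOfTheorem1.1}, comparing with the rescaled drifts $a_n(x)=-\tfrac1n a(-x/n)$ for $x<0$ and invoking Theorem \ref{thm:limitprobs} to push the rightward weight up to $\tfrac{n}{n+1}$ for every $n$. Second, ``restarting at the regular point where $\psi_+$ exits the neighbourhood'' does not dispose of the continuation: beyond $\delta_0$ the drift satisfies no Osgood condition and may vanish or change sign, so $\psi_+$ may freeze at a singular point $R_+$, and showing that the limit neither passes $R_+$ nor falls back below it requires the stopped-process and $a\vee0$ comparison arguments of Steps 1--3 in Section \ref{sec:positive_drift}.
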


The proof of Theorem \ref{thm:ZeroNoisePositiveDrift111} for  strictly positive drifts $a$ is given in Section \ref{sec:positive_drift}, while the general case is considered in Section \ref{section:finalOfTheorem1.1}. The final theorem applies also to signed drifts:

\begin{theorem}\label{thm:ZeroNoiseRepulsive}
Let $a\in L^\infty(\mbR)$ satisfy
\begin{equation}\label{eq:osgoodrepulsive}
-\int_{\alpha}^{0} \frac{1}{a(z)\wedge0}\, dz<\infty, \qquad \int_{0}^{\beta} \frac{1}{a(z)\vee 0}\, dz<\infty
\end{equation}
for some $\alpha<0<\beta$ (compare with \eqref{eq:osgood_cond}). Let $\{\ve_k\}_k$ be some sequence satisfying $\ve_k>0$ and $\lim_{k\to\infty}\ve_k=0$, and define
\begin{equation}\label{eq:weights}
p_k \coloneqq
\frac{s_{\ve_k}(0)-s_{\ve_k}(\alpha)}{s_{\ve_k}(\beta)- s_{\ve_k}(\alpha)}
\in [0,1], \qquad
s_\ve(r) \coloneqq \int_0^r \exp\Bigl(-\frac{2}{\ve^2}\int_0^z a(u)\,du\Bigr)\,dz.
\end{equation}
Then $\{P_{\ve_k}\}_k$ is weakly convergent if $\{p_k\}_k$ converges. Defining $p\coloneqq \lim_{k}p_k$ and $P\coloneqq\wlim_k P_{\nu_k}$, we have
\begin{equation}\label{eq:limitMeasure}
P = (1-p)\delta_{\psi_-} + p\delta_{\psi_+}.
\end{equation}
\end{theorem}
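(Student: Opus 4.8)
The plan is to reduce the statement to two facts: that $X_{\ve_k}$ leaves the interval $(\alpha,\beta)$ through its right endpoint with probability exactly $p_k$, and that, conditionally on the endpoint through which it exits, the entire trajectory concentrates on $\psi_+$ or on $\psi_-$. Granting these, weak convergence follows by splitting every test integral according to the exit endpoint. I recall from the discussion preceding the theorem that $\{X_{\ve_k}\}_k$ is relatively compact in $C([0,T])$, so no mass can escape; the real content is to identify the limit and its weights.

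First I would compute the weights exactly. Let $\mathcal L_\ve\coloneqq\tfrac{\ve^2}{2}\pt_{xx}+a\,\pt_x$ be the generator of \eqref{eq:ode_pert}. Differentiating the definition of $s_\ve$ gives $s_\ve'(r)=\exp(-\tfrac{2}{\ve^2}\int_0^r a)$ and hence $\mathcal L_\ve s_\ve\equiv 0$, so $s_\ve(X_\ve(t))$ is a local martingale. Writing $\tau\coloneqq\inf\{t>0:X_\ve(t)\notin(\alpha,\beta)\}$, the nondegeneracy of the noise and boundedness of $a$ give $\tau<\infty$ almost surely, while $s_\ve$ is bounded on $[\alpha,\beta]$. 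Optional stopping together with $s_\ve(0)=0$ then yields
\begin{equation*}
0=\Exp[s_\ve(X_\ve(\tau))]=s_\ve(\alpha)\,\Pr(X_\ve(\tau)=\alpha)+s_\ve(\beta)\,\Pr(X_\ve(\tau)=\beta),
\end{equation*}
so that $\Pr(X_\ve(\tau)=\beta)=p_k$ and $\Pr(X_\ve(\tau)=\alpha)=1-p_k$. This step is routine.

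The substance is the tracking claim: on the event $\{X_\ve(\tau)=\beta\}$ one has $\|X_\ve-\psi_+\|_{C([0,T])}\to0$ in probability, and symmetrically with $\psi_-$ on $\{X_\ve(\tau)=\alpha\}$. Here \eqref{eq:osgoodrepulsive} is crucial: it forces $a>0$ a.e.\ on $(0,\beta)$ and $a<0$ a.e.\ on $(\alpha,0)$, so $(\alpha,\beta)$ contains no interior zero of $a$, the origin is genuinely repulsive, and $R_+\ge\beta$, whence $\psi_+$ leaves $0$ at once and reaches $\beta$ at the finite time $A_+(\beta)$. I would split a right-exiting path at the first time $\sigma_\delta$ it reaches a level $\delta\in(0,\beta)$. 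On $[0,\sigma_\delta]$ the path lives near the origin, where a space-time rescaling in the spirit of \cite{PilipenkoProske2015} shows that the noise-dominated core shrinks to a point and $\sigma_\delta\to A_+(\delta)$, so that $X_\ve$ stays within $o(1)$ of $\psi_+|_{[0,A_+(\delta)]}$; on $[\sigma_\delta,T]$, where $a$ keeps a fixed sign, a comparison argument parallel to the proof of Theorem \ref{thm:ZeroNoisePositiveDrift111} shows that $X_\ve$ shadows $\psi_+$ uniformly up to the singular point $R_+$, beyond which $\psi_+$ is frozen. The left branch is identical.

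Granting the tracking claim, the conclusion is immediate: for any bounded continuous $F\from C([0,T])\to\mbR$,
\begin{equation*}
\Exp[F(X_{\ve_k})]=\Exp[F(X_{\ve_k})\ind_{\{X_{\ve_k}(\tau)=\beta\}}]+\Exp[F(X_{\ve_k})\ind_{\{X_{\ve_k}(\tau)=\alpha\}}]\longrightarrow p\,F(\psi_+)+(1-p)\,F(\psi_-),
\end{equation*}
where the masses come from the second step and $p_k\to p$, the values of $F$ from the tracking claim, and the passage to the limit from bounded convergence; this is exactly \eqref{eq:limitMeasure}. I expect the main obstacle to be the tracking claim, and specifically the matching between the algebraic event $\{X_\ve(\tau)=\beta\}$ and the geometric event $\{X_\ve\approx\psi_+\}$: one must show that a path exiting at $\beta$ does not first wander back across the origin, and one must control the degenerate dynamics near $0$ uniformly in $\ve$ so that the short, vanishing excursion there does not desynchronise $X_\ve$ from $\psi_+$. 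This is precisely where one-dimensionality and the finiteness of $A_\pm$ near $0$ enter decisively.
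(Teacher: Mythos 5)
Your overall architecture --- exit probabilities through the scale function, tracking after exit via Theorem \ref{thm:ZeroNoisePositiveDrift111}, and a decomposition of the test integrals over the exit side --- is the same as the paper's, and your martingale computation of the weights is correct (it is the content of Theorem \ref{thm:exit_time}\ref{thm:exit_time1}). The gap is exactly where you suspect it: the tracking claim near the origin. You propose to control the excursion in $(\alpha,\beta)$ by a space-time rescaling ``in the spirit of \cite{PilipenkoProske2015}'', but that method requires the drift to have power-law (self-similar) asymptotics at $0$; here $a$ is merely $L^\infty$ with the integrability condition \eqref{eq:osgoodrepulsive}, and no such scaling structure is available. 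Asserting $\sigma_\delta\to A_+(\delta)$ on the right-exit event is essentially the whole difficulty: without it, the limit law could put mass on \emph{delayed} copies of $\psi_+$, since a path that eventually exits at $\beta$ may still linger near $0$ for a non-vanishing time (compare Example \ref{ex:2}, where exactly such an atom at the trivial solution survives when the Osgood condition fails on one side).

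The paper closes this gap quantitatively rather than by rescaling: Lemma \ref{lem:osgoodrepulsive} computes, via the Green-function formula \eqref{eq:194} and the approximate-identity Lemma \ref{lem:approxidentity}, that $\Exp^0\bigl(\tau^\alpha_{\ve_k}\wedge\tau^\beta_{\ve_k}\bigr)\to(1-p)\int_\alpha^0\frac{-1}{a(z)}\,dz+p\int_0^\beta\frac{1}{a(z)}\,dz$. This shows both that the limit process exits $(\alpha,\beta)$ almost surely (so the exit probabilities pass to the limit along the subsequence) and that the expected exit time tends to $(1-p)A(\alpha)+pA(\beta)$, which vanishes as $\alpha,\beta\to0$ by \eqref{eq:osgoodrepulsive}. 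That is the step which converts ``exits on the right'' into ``tracks $\psi_+$ from time $0$'', and it is the step your proposal replaces with an inapplicable reference. Even if you keep your decomposition at $\sigma_\delta$, you would still need an estimate of this type to show that the time spent before reaching level $\delta$ is small uniformly in $\ve$; the expected-exit-time computation cannot be avoided.
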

The proof is given in Section \ref{sec:repulsive}, where we also provide tools for computing $p$.

\subsection{Outline of the paper}
We now give an outline of the rest of this manuscript. In Section \ref{sec:technical_results} we
give several technical results on convergence of SDEs with respect to perturbations of the drift;
 the relation between the solution and its
 exit time; and the distribution of the solution of an SDE.
 The goal of Section \ref{sec:positive_drift} is to prove Theorem \ref{thm:ZeroNoisePositiveDrift111} in the case where $a>0$, and in Section \ref{section:finalOfTheorem1.1} we extend to the case $a\geq0$. In Section \ref{sec:repulsive} we prove Theorem \ref{thm:ZeroNoiseRepulsive} and provide several results on sufficient conditions for convergence. Finally, we give some examples in Section \ref{sec:examples}.

\section{Technical results}\label{sec:technical_results}
In this section we list a few technical results. The first two results are comparison principles.  In order to prove them we use approximations by SDEs with smooth coefficients and the classical results on comparison. Since we do not suppose that the drift is smooth or even continuous, the results are not standard.

\begin{theorem}\label{thm:convergenceSDE_Thm}
Let $\{\drift_n\from \mbR \rightarrow \mbR \}_{n\geq0}$ be uniformly bounded
 measurable functions such that $\drift_n \to \drift_0$ pointwise a.e.~as $n\to\infty$. Let $X_n$ be a solution to the SDE
\[
X_n (t )= x_n + \int_0^t \drift_n (X_n (s )) ds + W(t),\qquad t\in[0,T].
\]
Then $\{X_n\}_n$ converges uniformly in probability:
\[
\bigl\|X_n(t)-X_0(t)\bigr\|_{C([0,T])} \overset{P}\to 0 \qquad \text{as } n\to\infty.
\]
\end{theorem}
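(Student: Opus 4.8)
Because the drifts $\drift_n$ are merely measurable, the difference $X_n-X_0$ cannot be estimated directly by Gr\"onwall's inequality: even if $X_n(s)\to X_0(s)$ pathwise, the discontinuity of $\drift_0$ gives no control over $\drift_0(X_n(s))-\drift_0(X_0(s))$. The plan is therefore to pass to the \emph{natural scale}, which removes the drift altogether and turns the equation for $X_n$ into a driftless SDE whose diffusion coefficient is uniformly Lipschitz and converges locally uniformly; for such equations the usual stability estimate applies. Throughout we take $x_n\to x_0$, as the conclusion requires, and we write $C\coloneqq\sup_n\|\drift_n\|_\infty<\infty$.

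First I would introduce the scale functions
\[
s_n(x) \coloneqq \int_0^x \exp\Bigl(-2\int_0^y \drift_n(z)\,dz\Bigr)\,dy, \qquad n\ge 0.
\]
Since $\drift_n\to\drift_0$ a.e.\ and $\abs{\drift_n}\le C$, dominated convergence together with the equi-Lipschitz continuity of $y\mapsto\int_0^y\drift_n$ shows that $s_n'\to s_0'$, and hence $s_n\to s_0$, locally uniformly; moreover $s_n'$ and $(s_n^{-1})'$ are bounded above and away from $0$ on compacts, uniformly in $n$. Thus each $s_n$ is a strictly increasing $C^1$ function whose inverse is Lipschitz uniformly in $n$ on the relevant compact ranges, and $s_n^{-1}\to s_0^{-1}$ locally uniformly. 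Because $s_n'$ is Lipschitz we have $s_n\in W^{2,\infty}_{\mathrm{loc}}$ with $\tfrac12 s_n''+\drift_n s_n'=0$ a.e., so the generalized It\^o (It\^o--Krylov) formula applies and the drift of $s_n(X_n)$ vanishes. Hence $M_n\coloneqq s_n(X_n)$ solves
\[
dM_n(t) = \sigma_n(M_n(t))\,dW(t), \qquad M_n(0)=s_n(x_n),
\]
with $\sigma_n(y)\coloneqq s_n'(s_n^{-1}(y))$. A direct computation gives $\sigma_n'(y)=-2\drift_n(s_n^{-1}(y))$, so $\sigma_n$ is globally Lipschitz with constant $2C$ uniformly in $n$, and $\sigma_n\to\sigma_0$ locally uniformly by the previous step.

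With uniformly Lipschitz coefficients I would then run the standard stability estimate. Stopping at $\tau_R^n\coloneqq\inf\{t:\abs{M_n(t)}\vee\abs{M_0(t)}\ge R\}$, the Burkholder--Davis--Gundy inequality, the splitting $\sigma_n(M_n)-\sigma_0(M_0)=(\sigma_n-\sigma_0)(M_n)+(\sigma_0(M_n)-\sigma_0(M_0))$, the bounds $\abs{(\sigma_n-\sigma_0)(M_n)}\le\sup_{\abs{y}\le R}\abs{\sigma_n-\sigma_0}$ and $\abs{\sigma_0(M_n)-\sigma_0(M_0)}\le 2C\abs{M_n-M_0}$, and Gr\"onwall's inequality yield $\Exp\sup_{t\le T\wedge\tau_R^n}\abs{M_n-M_0}^2\to 0$ as $n\to\infty$, for each fixed $R$. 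The linear growth of $\sigma_n$ (uniform in $n$) gives uniform moment bounds on $\sup_{t\le T}\abs{M_n}$, whence $\sup_n\Pr(\tau_R^n\le T)\to 0$ as $R\to\infty$; this removes the localization and gives $\|M_n-M_0\|_{C([0,T])}\overset{P}{\to}0$. Transforming back via $X_n=s_n^{-1}(M_n)$ and using the local uniform convergence and uniform local Lipschitz continuity of $s_n^{-1}$ then yields $\|X_n-X_0\|_{C([0,T])}\overset{P}{\to}0$.

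The conceptual crux is the lack of regularity of the drift, which the scale-function transformation sidesteps by converting the problem into one with a Lipschitz coefficient. The main technical obstacle is that the convergence $\sigma_n\to\sigma_0$ is only locally uniform: the coefficients need not converge uniformly near infinity, so one cannot bound $(\sigma_n-\sigma_0)(M_n)$ globally. This is exactly what forces the localization argument and the accompanying uniform-in-$n$ tightness bound on $\sup_{t\le T}\abs{M_n}$.
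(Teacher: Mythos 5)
The paper does not prove this theorem itself --- it simply cites \cite[Theorem~2.1]{Pilipenko2013} --- so there is no in-text argument to compare against; your proposal supplies a correct, self-contained proof via the classical Zvonkin/scale-function transformation, which is the natural route here and, in particular, correctly sidesteps the real difficulty (one cannot pass to the limit in $a_n(X_n)$ directly when $a_n\to a_0$ only a.e.) by funneling the a.e.\ convergence through $s_n\to s_0$ and $\sigma_n\to\sigma_0$, where only continuity of $\sigma_0$ is needed. One small point to tidy up: since $a_n$ need not decay, $s_n$ is in general \emph{not} surjective onto $\mbR$ (e.g.\ $a_n\equiv C>0$ gives $s_n(x)=(1-e^{-2Cx})/(2C)$, bounded above), so $\sigma_n=s_n'\circ s_n^{-1}$ is only defined on the open interval $\mathrm{ran}(s_n)$ and the phrase ``globally Lipschitz'' should be read on that domain; the expression $(\sigma_n-\sigma_0)(M_n)$ then requires $M_n$ to lie in $\mathrm{ran}(s_0)$ as well. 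This is harmless --- either extend each $\sigma_n$ to $\mbR$ preserving the Lipschitz constant $2C$, or localize in $x$-space rather than $M$-space, noting that $\sup_{t\le T}|X_n(t)|\le |x_n|+CT+\sup_{t\le T}|W(t)|$ gives the needed tightness directly from the original equation --- but as written the localization step leans on a global domain for $\sigma_n$ that does not literally exist. With that adjustment the argument is complete (assuming, as you note, $x_n\to x_0$, which the statement leaves implicit).
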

For a proof, see e.g.~\cite[Theorem~2.1]{Pilipenko2013}.

\begin{theorem}\label{thm:comparisonThm}
Let \( \drift_1, \drift_2\from \mbR \rightarrow \mbR \) be locally bounded measurable functions satisfying \( \drift_1 \leq \drift_2\) and let $x_1\leq x_2$. Let \( X_1, X_2 \) be solutions to the equations
\begin{align*}
X_i (t )= x_i + \int_0^t \drift_i (X_i (s)) ds + W(t), \qquad i=1,2.
\end{align*}
Then
\[ X_1 (t )\leq X_2 (t )\qquad \forall\ t \geq 0 \]
with probability 1.
\end{theorem}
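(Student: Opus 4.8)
The plan is to reduce to the classical comparison theorem for SDEs with Lipschitz coefficients by a two-step approximation: first a localization that makes the drifts bounded, then a mollification that preserves the ordering $\drift_1 \le \drift_2$ while regularizing.

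First I would localize. Fix $R>0$ and set $\drift_i^R \coloneqq \drift_i \ind_{(-R,R)}$, which is bounded (since $\drift_i$ is locally bounded) and still satisfies $\drift_1^R \le \drift_2^R$. Let $X_i^R$ denote the corresponding solution; by strong uniqueness (Zvonkin--Veretennikov) $X_i^R$ coincides with $X_i$ up to the first exit time $\tau_R$ of $X_1$ or $X_2$ from $(-R,R)$. Since the drifts are locally bounded and the noise is additive, the solutions do not blow up in finite time, so $\tau_R \to \infty$ almost surely as $R\to\infty$. Hence it suffices to prove $X_1^R \le X_2^R$ for every $R$ and let $R\to\infty$; in other words, we may assume from now on that $\drift_1,\drift_2$ are \emph{bounded}.

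Next I would mollify. Let $\rho_n$ be a standard nonnegative mollifier and set $\drift_i^n \coloneqq \drift_i * \rho_n$. These functions are smooth, globally Lipschitz (with $\|(\drift_i^n)'\|_\infty \le \|\drift_i\|_\infty \|\rho_n'\|_{L^1}$), uniformly bounded by $\|\drift_i\|_\infty$, and converge to $\drift_i$ at every Lebesgue point, hence pointwise a.e. The crucial point is that, because $\rho_n \ge 0$, mollification preserves the ordering: $\drift_1^n \le \drift_2^n$ \emph{everywhere}. For these smooth, Lipschitz, ordered drifts $\drift_1^n \le \drift_2^n$, ordered initial data $x_1 \le x_2$, and the common driving noise $W$, the classical comparison theorem gives $X_1^n(t) \le X_2^n(t)$ for all $t$, almost surely.

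Finally I would pass to the limit. By Theorem~\ref{thm:convergenceSDE_Thm}, $X_i^n \to X_i$ uniformly in probability on $[0,T]$; extracting a common subsequence, the convergence becomes a.s.\ uniform, and the pathwise inequality $X_1^n \le X_2^n$ survives in the limit to yield $X_1(t) \le X_2(t)$ for all $t\in[0,T]$, almost surely. As $T$ is arbitrary, this proves the claim. The main obstacle I anticipate is not any single estimate but arranging the two approximation steps to mesh with the hypotheses of Theorem~\ref{thm:convergenceSDE_Thm}: that convergence result requires \emph{uniformly bounded} drifts, which is exactly why the localization must precede the mollification, and one must check that the ordering is retained at every stage --- a property that mollification guarantees precisely because the mollifier is nonnegative.
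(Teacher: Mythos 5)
Your proposal is correct and follows essentially the same route as the paper's proof in Appendix~\ref{app:comparisonprinciple}: mollify with a nonnegative kernel to preserve the ordering, invoke the classical comparison theorem for the smooth drifts, pass to the limit via Theorem~\ref{thm:convergenceSDE_Thm}, and handle merely locally bounded drifts by cutting off with an indicator and using strong uniqueness up to the exit time. The only cosmetic difference is that you localize before mollifying while the paper treats the bounded case first and localizes afterwards; the content is identical.
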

The proof is given in Appendix \ref{app:comparisonprinciple}.

\begin{lemma}\label{lem:timeinversion}
Let $\{f_n\}_{n\geq 1}\subset C([0,T])$ be a uniformly convergent sequence of non-random continuous functions and let $f_0\in C([0,T])$ be a strictly increasing function. Set $\tau^x_n\coloneqq\inf\bigl\{t\geq 0 : f_n(t)=x\bigr\}$ for every $n\geq 0$, and assume that
\[
\tau^x_n \to\tau^x_0 \qquad \text{for every } x\in \big(f_0(0), f_0(T)\bigr)\cap\mbQ.
\]
Then
\[
f_n\to f_0 \qquad \text{in } C([0,T]) \text{ as } n\to\infty.
\]
\end{lemma}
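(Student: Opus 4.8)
The plan is as follows. Since $\{f_n\}_{n\geq1}$ converges uniformly on $[0,T]$, it has a limit $g\in C([0,T])$, and it therefore suffices to show $g=f_0$ (the point being that the uniform limit is \emph{not} assumed to be $f_0$ a priori). The strategy is to identify $g$ with $f_0$ on a dense set of times by passing to the limit in the relations $f_n(\tau^x_n)=x$, and then to invoke continuity of both functions.

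First I would record the structure coming from monotonicity. Since $f_0$ is continuous and strictly increasing, for every $x\in\bigl(f_0(0),f_0(T)\bigr)$ the equation $f_0(t)=x$ has a unique root $\tau^x_0=f_0^{-1}(x)\in(0,T)$, and $f_0^{-1}$ is a continuous, strictly increasing bijection of $\bigl(f_0(0),f_0(T)\bigr)$ onto $(0,T)$. Next, fix a rational $x\in\bigl(f_0(0),f_0(T)\bigr)$. Then $\tau^x_0\in(0,T)$ is finite, so by the hypothesis $\tau^x_n\to\tau^x_0$ the hitting time $\tau^x_n$ is finite for all large $n$; by continuity of $f_n$ the infimum defining $\tau^x_n$ is attained, whence $f_n(\tau^x_n)=x$. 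I would then pass to the limit via
\[
\bigl|g(\tau^x_0)-x\bigr|=\bigl|g(\tau^x_0)-f_n(\tau^x_n)\bigr|\leq\bigl|g(\tau^x_0)-g(\tau^x_n)\bigr|+\bigl|g(\tau^x_n)-f_n(\tau^x_n)\bigr|,
\]
where the first term tends to $0$ because $g$ is continuous and $\tau^x_n\to\tau^x_0$, and the second tends to $0$ because $f_n\to g$ uniformly. Hence $g(\tau^x_0)=x=f_0(\tau^x_0)$.

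Finally, since $f_0^{-1}$ is a homeomorphism, the set $\{f_0^{-1}(x):x\in\mbQ\cap(f_0(0),f_0(T))\}$ is dense in $(0,T)$, hence in $[0,T]$. As $g$ and $f_0$ are continuous and agree on this dense set, they coincide on all of $[0,T]$, which is exactly the claim $f_n\to f_0$.

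The main obstacle, though a mild one, is the limit passage $f_n(\tau^x_n)\to g(\tau^x_0)$: one must combine the uniform convergence $f_n\to g$ (to control the moving evaluation point) with the continuity of the limit $g$ (to absorb $\tau^x_n\to\tau^x_0$), and one must ensure the hitting times are genuinely attained so that $f_n(\tau^x_n)=x$ holds exactly rather than approximately. The conceptual heart of the argument is that the uniform limit $g$ is pinned down to be $f_0$ purely through the hitting-time hypothesis together with the density of inverse images furnished by strict monotonicity of $f_0$.
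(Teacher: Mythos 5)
Your proposal is correct and follows essentially the same route as the paper: both arguments use the attained hitting-time identity $f_n(\tau^x_n)=x$, the density of $f_0^{-1}\bigl(\mbQ\cap(f_0(0),f_0(T))\bigr)$ in $[0,T]$, and the combination of uniform convergence with continuity to identify the uniform limit with $f_0$ on that dense set. The only cosmetic difference is that you name the limit $g$ and run a triangle inequality at $g(\tau^x_0)$, whereas the paper passes directly from $f_n(\tau^x_n)$ to $f_n(t)$.
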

\begin{proof}
Let $\mathcal{T}\coloneqq f_0^{-1}(\mbQ)$, and note that this is a dense subset of $[0,T]$, since $f_0^{-1}$ is continuous. Let $t\in\mathcal{T}$ be arbitrary and let $x\coloneqq f_0(t)\in\mbQ$. By assumptions of the lemma we have $t=\tau_0^x=\lim_{n\to\infty}\tau_n^x.$
Moreover, since $f_n(\tau^x_n)=x$ for sufficiently large  $n$, we have
\begin{equation}\label{eq:240}
f_0(t)=x=\lim_{n\to\infty}f_n(\tau^x_n) = \lim_{n\to\infty} f_n(t),
\end{equation}
the last step following from the fact that $f_n$ converges uniformly and $\tau^x_n\to \tau^x_0=t$ as $n\to\infty$. Thus, $\{f_n\}_n$ converges pointwise to $f_0$ on a dense subset of $[0,T]$. But $\{f_n\}_n$ is uniformly convergent by assumption, so necessarily $f_n\to f_0$ uniformly.
\end{proof}
\begin{corollary}\label{cor:ConvergenceOfPaths}
Let $\{\xi_n\}_{n\geq 1} $ be a sequence of continuous stochastic processes $\xi_n\from[0,\infty)\to\mbR$
that is locally uniformly convergent with probability $1$. Let $\xi_0$ be a strictly increasing continuous process satisfying $\xi_0(0)=0$ and $\lim_{t\to\infty}\xi_0(t)=\infty$. Set $\tau_n^x\coloneqq\inf\{t\geq 0 :  \xi_n(t)\geq x\}$ and assume that
\[
\tau_n^x \overset{P}\to\tau_0^x \qquad \text{for every } x\in[0,\infty)\cap\mbQ.
\]
Then
\[
\xi_n \to \xi_0 \qquad \text{locally uniformly with probability }1.
\]
\end{corollary}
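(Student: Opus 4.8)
The plan is to reduce the statement to the deterministic Lemma \ref{lem:timeinversion} by arguing pathwise along a suitable subsequence, and to use this to identify the almost sure limit that $\{\xi_n\}$ already possesses. By hypothesis $\{\xi_n\}$ converges locally uniformly with probability $1$ to some continuous process $\xi_\infty$, so the entire content of the corollary is to show that $\xi_\infty=\xi_0$ almost surely; the conclusion is then immediate.

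First I would pass to an almost surely convergent subsequence of the hitting times. Since $\mbQ\cap[0,\infty)$ is countable and $\tau_n^x\overset{P}\to\tau_0^x$ for each rational $x\ge0$, a diagonal extraction produces a subsequence $\{n_k\}$ and an event $\Omega_0$ with $\Pr(\Omega_0)=1$ on which
\[
\tau_{n_k}^x\to\tau_0^x\qquad\text{for every }x\in\mbQ\cap[0,\infty)
\]
simultaneously. Shrinking $\Omega_0$ (keeping full probability) I may also assume that on $\Omega_0$ the subsequence $\xi_{n_k}$ converges to $\xi_\infty$ locally uniformly and that the path of $\xi_0$ is continuous, strictly increasing, with $\xi_0(0)=0$ and $\xi_0(t)\to\infty$.

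Next I would run the deterministic argument pathwise. Fix $\omega\in\Omega_0$ and a rational $T>0$. For $x\in(0,\xi_0(T))$ strict monotonicity gives $\tau_0^x=\xi_0^{-1}(x)>0$, so $\tau_{n_k}^x\to\tau_0^x>0$ forces $\tau_{n_k}^x>0$ for all large $k$; by continuity of $\xi_{n_k}$ this means $\xi_{n_k}(\tau_{n_k}^x)=x$, i.e.\ the first-passage times defined with ``$\ge x$'' coincide, in the limit, with the level-$x$ crossing times used in Lemma \ref{lem:timeinversion}. The hypotheses of that lemma are therefore met on $[0,T]$ with $f_{n_k}=\xi_{n_k}(\cdot,\omega)$ and $f_0=\xi_0(\cdot,\omega)$, and it yields $\xi_{n_k}\to\xi_0$ uniformly on $[0,T]$. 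But on $\Omega_0$ we also have $\xi_{n_k}\to\xi_\infty$ uniformly on $[0,T]$, whence $\xi_\infty=\xi_0$ on $[0,T]$. Letting $T\to\infty$ through the rationals gives $\xi_\infty=\xi_0$ on $[0,\infty)$ for every $\omega\in\Omega_0$, that is, with probability $1$.

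Since the full sequence satisfies $\xi_n\to\xi_\infty=\xi_0$ locally uniformly with probability $1$, this finishes the argument. The two delicate points are the diagonal extraction that upgrades convergence in probability of the countably many hitting times to simultaneous almost sure convergence along a single subsequence, and the verification that the ``$\ge x$'' first-passage times agree in the limit with the level crossings of Lemma \ref{lem:timeinversion}; both rest on the strict monotonicity of $\xi_0$ together with path continuity. I expect the extraction step to be the main obstacle to phrase cleanly, since it is precisely what legitimises the pathwise use of the deterministic lemma.
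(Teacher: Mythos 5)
Your proposal is correct and follows essentially the same route as the paper: a diagonal extraction to upgrade the countably many hitting-time convergences in probability to simultaneous almost sure convergence along one subsequence, followed by a pathwise application of Lemma \ref{lem:timeinversion} and identification of the limit with the a.s.\ limit of the full sequence. Your extra check that the ``$\geq x$'' first-passage times satisfy $\xi_{n_k}(\tau_{n_k}^x)=x$ for large $k$ (using $\tau_0^x>0$) is a point the paper's proof passes over silently, but it is not a different argument.
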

\begin{proof}
Enumerate the positive rational numbers as $\mbQ\cap (0,\infty)=\{x_n\}_n$.
Select a sequence $\{n^1_k\}_k$ such that
\[
\lim_{k\to\infty}\tau^{x_1}_{n^1_k} = \tau^{x_1}_0 \qquad \text{$\Pr$-a.s.}
\]
Then select a sub-subsequence $\{n^2_k\}_k$ of $\{n^1_k\}_k$ such that
\[
\lim_{k\to\infty}\tau^{x_2}_{n^2_k} = \tau^{x_2}_0 \qquad \text{$\Pr$-a.s.,}
\]
and so on. Then
\[
\Pr\Bigl(\forall\ j\in\mbN \quad \lim_{k\to\infty}\tau^{x_j}_{n^k_k} = \tau^{x_j}_0 \Bigr) = 1.
\]
From Lemma \ref{lem:timeinversion} it follows that
\[
\Pr\Bigl(\lim_{k\to\infty}\xi_{n^k_k}=\xi_0 \quad \text{uniformly in }[0,T]\Bigr)=1
\]
for any $T>0$. This yields the result.
\end{proof}

Assume that  $\drift, \sigma\from \mbR\to\mbR$ are bounded measurable functions, $\sigma$
 is separated from zero.
It is well known that the stochastic differential equation
\[
d\xi(t) = \drift(\xi(t))dt+ \sigma(\xi(t)) dW(t), \qquad t\geq 0,
\]
has a unique (weak) solution, which is a continuous strong Markov process, i.e., $\xi$ is a diffusion process.

Denote
 $L\coloneqq\drift(x)\frac{d}{dx}+\frac{1}2\sigma^2(x) \frac{d^2}{dx^2}$
 and
let $s$ and $m$ be a scale function and a speed measure of $\xi,$
see details in \cite[Chapter VII]{RevuzYor1999}.
Define the hitting time of $\xi$ as $\tau^y\coloneqq\inf\{t\geq 0 : \xi(t) =y\}$.
Recall that $s$ and $m$ are well-defined up to constants,
and $s$ is a non-degenerate $L$-harmonic function, i.e.,
\begin{equation}\label{eq:Lharmonic}
L s=0,
\end{equation}
in particular
\begin{equation}\label{eq:eq_scale}
s(x)\coloneqq\int_{y_1}^x\exp\left(-\int_{y_2}^y\frac{2 a(z)}{\sigma(z)^2}dz\right) dy,
\end{equation}
and
\begin{equation}\label{eq:463}
m(dy)=\frac{2}{s'(y)\sigma(y)^2}dy
\end{equation}
for any choices of $y_1, y_2,$ see \cite[Chapter VII, Exercise 3.20]{RevuzYor1999}.

\begin{theorem}\label{thm:exit_time} Let $x_1<x_2$ be arbitrary.
\begin{enumerate}[leftmargin=*,label=(\roman*)]
\item  \cite[Chapter VII, Proposition 3.2 and Exercise 3.20]{RevuzYor1999}
\label{thm:exit_time1}
\begin{align*}
\Pr^{x}\big(\tau^{x_1}\wedge \tau^{x_2}<\infty\big)=1 \qquad &\forall\ x\in[x_1,x_2] \\
\intertext{and}
\Pr^{x}\bigl(\tau^{x_1}< \tau^{x_2}\bigr)=\frac{s(x_2)-s(x)}{s(x_2)-s(x_1)} \qquad &\forall\ x\in[x_1,x_2],
\end{align*}
%
%
%
\item  \label{thm:exit_time3}\cite[Chapter VII, Corollary 3.8]{RevuzYor1999}
For any $I=(x_1,x_2) $, $x\in I$ and for any non-negative measurable function $f$ we have
\begin{equation}\label{eq:194}
\Exp^x\biggl(\int_0^{\tau^{x_1}\wedge \tau^{x_2}} \!\!f(\xi(t)) dt\biggr) =
\int_{x_1}^{x_2} \!G(x,y) f(y) m(dy),
\end{equation}
where   $G=G_I$ is a symmetric function such that
\[
G_I(x,y)=\frac{(s(x)-s(x_1))(s(x_2)-s(y))}{s(x_2)-s(x_1)}, \qquad x_1\leq x\leq y\leq x_2.
\]
\end{enumerate}
\end{theorem}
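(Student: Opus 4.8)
The plan is to deduce both parts from the single structural fact that \(s(\xi_t)\) is a local martingale, which holds precisely because \(s\) is \(L\)-harmonic. Since \(\sigma\) is bounded and bounded away from zero and \(a\) is bounded, the density \(s'(x)=\exp\bigl(-\int_{y_2}^x 2a/\sigma^2\bigr)\) is continuous and strictly positive, so \(s\in C^1\) with \(s'\) absolutely continuous and \(s''=-\tfrac{2a}{\sigma^2}s'\) a.e. Applying the generalized It\^o formula --- valid since \(s\in C^1\) and \(s'\) is absolutely continuous --- gives
\[
ds(\xi_t)=s'(\xi_t)\sigma(\xi_t)\,dW_t+\bigl(Ls\bigr)(\xi_t)\,dt=s'(\xi_t)\sigma(\xi_t)\,dW_t,
\]
because \(Ls=as'+\tfrac12\sigma^2 s''=0\) a.e. Writing \(\tau\coloneqq\tau^{x_1}\wedge\tau^{x_2}\), the stopped process \(M_t\coloneqq s(\xi_{t\wedge\tau})\) is then a bounded martingale, since \(\xi_{t\wedge\tau}\in[x_1,x_2]\) forces \(M_t\in[s(x_1),s(x_2)]\).

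For part \ref{thm:exit_time1} I would first establish \(\tau<\infty\) a.s. The quadratic variation satisfies \(\lg M\rg_t=\int_0^{t\wedge\tau}\bigl(s'(\xi_u)\sigma(\xi_u)\bigr)^2\,du\geq\delta^2(t\wedge\tau)\), where \(\delta>0\) is a lower bound for \(s'\sigma\) on \([x_1,x_2]\) (such a bound exists because \(s'\) is continuous and positive and \(\sigma\geq c_0>0\)). As \(M\) is bounded it is \(L^2\)-bounded, so \(\Exp^x[\lg M\rg_\infty]<\infty\) and hence \(\lg M\rg_\infty<\infty\) a.s.; together with the lower bound this forces \(\tau<\infty\) a.s. Optional stopping for the bounded martingale \(M\) at the a.s.-finite time \(\tau\) then gives \(s(x)=\Exp^x[s(\xi_\tau)]=s(x_1)\Pr^x(\tau^{x_1}<\tau^{x_2})+s(x_2)\Pr^x(\tau^{x_2}<\tau^{x_1})\); path-continuity and \(x_1\neq x_2\) ensure the two hitting events partition the sample space up to a null set, and solving the resulting linear relation yields the stated formula.

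For part \ref{thm:exit_time3} I would verify the Green-function identity by checking that the candidate \(v(x)\coloneqq\int_{x_1}^{x_2}G_I(x,y)f(y)\,m(dy)\) solves the Dirichlet problem \(Lv=-f\) on \((x_1,x_2)\) with \(v(x_1)=v(x_2)=0\), and then reading off the representation via Dynkin's formula. The boundary conditions follow from \(G_I(x_1,y)=G_I(x_2,y)=0\). For the equation it is cleanest to use the scale--speed form \(L=\frac{d}{dm}\frac{d}{ds}\): for fixed \(y\), the map \(x\mapsto G_I(x,y)\) is \(s\)-affine (hence \(L\)-harmonic) on each of \((x_1,y)\) and \((y,x_2)\), is continuous at \(x=y\), and its \(s\)-derivative jumps by exactly \(-1\) there, so \(\frac{d}{dm_x}\frac{d}{ds_x}G_I(\cdot,y)=-\delta_y\) as measures; integrating against \(f\,dm\) gives \(Lv=-f\) a.e. Applying the generalized Dynkin formula to \(v(\xi_{t\wedge\tau})\) and letting \(t\to\infty\) --- using \(\tau<\infty\), boundedness of \(v\), \(v(\xi_\tau)=0\), and monotone convergence for the nonnegative integrand --- yields \(v(x)=\Exp^x\bigl[\int_0^\tau f(\xi_t)\,dt\bigr]\), which is \eqref{eq:194}.

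The main obstacle throughout is the lack of regularity of the coefficients: because \(a\) is only measurable, \(s\) and \(v\) are merely \(C^1\) with absolutely continuous derivatives, so every invocation of ``It\^o's formula'' and ``Dynkin's formula'' must be the generalized version for such functions, and \(Lv=-f\) holds only in the a.e./scale--speed sense rather than pointwise. Justifying these generalized formulas --- equivalently, controlling the second-order term through local times and the occupation-times formula, which is exactly where the nondegeneracy \(\sigma\geq c_0>0\) is used --- is the one place requiring genuine care; once it is in place, parts \ref{thm:exit_time1} and \ref{thm:exit_time3} reduce to the short martingale computations above.
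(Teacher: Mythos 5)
The paper does not prove Theorem \ref{thm:exit_time} at all --- it is quoted directly from Revuz--Yor \cite[Chapter VII]{RevuzYor1999}, where it is established within the general theory of regular diffusions (scale/speed characterization and time-change to Brownian motion). Your proposal supplies a correct, self-contained proof adapted to the SDE setting at hand, and the route is sound: $s'=\exp\bigl(-\int 2a/\sigma^2\bigr)$ is locally Lipschitz and strictly positive, so $s$ is a difference of convex functions and the It\^o--Tanaka formula plus the occupation-times identity (this is exactly where $\sigma\geq c_0>0$ is needed, both to convert the local-time term into $\tfrac12\int_0^t s''(\xi_u)\sigma^2(\xi_u)\,du$ and to ensure that a.e.\ identities such as $Ls=0$ are not spoiled on Lebesgue-null sets, since $\int_0^t\ind_N(\xi_u)\,du=0$ for null $N$); the bounded-martingale/quadratic-variation argument for $\tau<\infty$ and the optional-stopping computation for the exit probability are both standard and correctly executed; and your verification that $x\mapsto G_I(x,y)$ is piecewise $s$-affine with $s$-derivative jump $-1$ at $y$, hence $\frac{d}{dm}\frac{d}{ds}\,G_I(\cdot,y)=-\delta_y$ and $Lv=-f$ a.e., is the right way to confirm the Green-function representation (one checks directly that $\frac{d}{dm}\frac{d}{ds}=a\frac{d}{dx}+\frac{\sigma^2}{2}\frac{d^2}{dx^2}$ with the normalizations \eqref{eq:eq_scale}--\eqref{eq:463}). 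What your approach buys, compared with citing \cite{RevuzYor1999}, is an argument that never leaves the semimartingale framework and makes explicit how little regularity of $a$ is used. Two minor points you should add for completeness: the theorem is stated for \emph{any} non-negative measurable $f$, whereas your Dynkin step needs $v$ finite and $W^{2,1}$; so first prove \eqref{eq:194} for $f$ bounded (note $m$ has density bounded above and below on $[x_1,x_2]$, so $v$ is then $C^1$ with absolutely continuous derivative and bounded $v''$) and pass to general $f$ by monotone convergence applied to $f\wedge n$ on both sides. Also, in the optional-stopping step one should note the martingale part of $v(\xi_{t\wedge\tau})$ is a true martingale because $v'$ and $\sigma$ are bounded on $[x_1,x_2]$; with these one-line additions the proof is complete.
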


\begin{remark}\label{rem:harmonic_functions}~
\begin{enumerate}[leftmargin=*,label=(\textit{\roman*})]
\item The function $\tilde u(x)\coloneqq\Exp^x\Bigl(\int_0^{\tau^{x_1}\wedge \tau^{x_2}} f(\xi(t)) dt\Bigr)$ from the left-hand side of \eqref{eq:194}
is a solution to
\[
\begin{cases}
	L \tilde u(x) =-f(x), & x\in(x_1,x_2)\\
	\tilde u(x_1)=\tilde u(x_2)=0.
\end{cases}
\]
The function $G$ from  \eqref{eq:194} is the corresponding Green function, in the sense that $\tilde{u}(x)$ can be written as the right-hand side of \eqref{eq:194}.

\item \label{thm:exit_time2}
 If we take $f(x)=1$ in \eqref{eq:194}, then we get a formula for
  the expectation of the exit time  $u(x)\coloneqq\Exp^x(\tau^{x_1}\wedge \tau^{x_2})$, $x\in[x_1,x_2]$.
In particular,
\[u(x)=-\int_{x_1}^x2\Phi(y)\int_{x_1}^y \frac{dz}{\sigma(z)^2\Phi(z)}dy+
\int_{x_1}^{x_2}2\Phi(y)\int_{x_1}^y \frac{dz}{\sigma(z)^2\Phi(z)}dy \frac{\int_{x_1}^{x}\Phi(y)dy}{\int_{x_1}^{x_2}\Phi(y)dy},\]
where $\Phi(x)=\exp\left(-\int_{x_1}^x\frac{2 \drift(z)}{\sigma(z)^2}dz\right).$
\end{enumerate}
\end{remark}

Finally, the following result will be quite useful when taking limits
{$\sigma=\sigma_\ve(x)\coloneqq\ve\to0$} in terms such as $s$ and $u$ above.

\begin{lemma}\label{lem:approxidentity}
Let $\alpha<\beta$ and $\ve\neq0$, let $f,g\in L^1((\alpha,\beta))$ with $f>0$ almost everywhere, and let
\begin{equation*}
g_\ve(y)\coloneqq\int_{y}^{\beta}\exp\left(-\int_{y}^z \frac{f(u)}{\ve^2}\,du\right)\frac{f(z)}{\ve^2}g(z)\,dz, \qquad y\in[\alpha,\beta].
\end{equation*}
Then $g_\ve \to g$ as $\ve\to 0$ in $L^1((\alpha,\beta))$ and pointwise a.e.~ in $y\in(\alpha,\beta)$. The same is true if
\begin{equation*}
g_\ve(y)\coloneqq\int_{\alpha}^{y}\exp\left(-\int_z^y \frac{f(u)}{\ve^2}\,du\right)\frac{f(z)}{\ve^2}g(z)\,dz, \qquad y\in[\alpha,\beta].
\end{equation*}
\end{lemma}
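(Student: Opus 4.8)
The plan is to recognise the kernel as a one-sided approximate identity adapted to the measure $dF$, where $F(y)\coloneqq\int_\alpha^y f(u)\,du$, and then run the classical approximate-identity argument. Since $f>0$ a.e.\ and $f\in L^1$, the function $F$ is absolutely continuous and strictly increasing, hence a homeomorphism of $[\alpha,\beta]$ onto $[0,L]$ with $L\coloneqq F(\beta)$. Using $F'=f$ a.e., the integrand equals $\ve^{-2}\exp\bigl(-(F(z)-F(y))/\ve^2\bigr)\,g(z)\,dF(z)$, so the substitution $w=F(z)$, together with $v\coloneqq F(y)$ and $G\coloneqq g\circ F^{-1}$, turns $g_\ve$ into a one-sided exponential smoothing: with
\[
h_\ve(v)\coloneqq\int_{v}^{L}\kappa_\ve(w-v)\,G(w)\,dw,\qquad \kappa_\ve(s)\coloneqq\tfrac{1}{\ve^2}e^{-s/\ve^2}\ind_{\{s>0\}},
\]
we have $g_\ve(y)=h_\ve(F(y))$. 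The second formula in the statement is the mirror image of the first under the reflection $y\mapsto\alpha+\beta-y$, so it suffices to treat this one. I would first record the two defining properties of $\kappa_\ve$: unit mass, $\int_0^\infty\kappa_\ve=1$, and negligible tail, $\int_\delta^\infty\kappa_\ve=e^{-\delta/\ve^2}\to0$ for each fixed $\delta>0$.

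For the pointwise statement I would write, at a point $v<L$,
\[
h_\ve(v)-G(v)=\int_0^{L-v}\kappa_\ve(s)\bigl(G(v+s)-G(v)\bigr)\,ds-G(v)\,e^{-(L-v)/\ve^2},
\]
the last term vanishing as $\ve\to0$. Setting $\phi(h)\coloneqq\int_0^h|G(v+s)-G(v)|\,ds$ and integrating the remaining integral by parts against the decreasing kernel $\kappa_\ve$, one bounds it by $\kappa_\ve(\delta)\phi(\delta)+\ve^{-2}\int_0^\delta\kappa_\ve(s)\phi(s)\,ds$; using $\int_0^\infty s\,\kappa_\ve(s)\,ds=\ve^2$ and the fact that $\phi(s)\le\eta s$ for small $s$ whenever $v$ is a right Lebesgue point of $G$, this is $\le\eta+o(1)$ for every $\eta>0$. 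Since Lebesgue-a.e.\ $v$ is such a point, $h_\ve\to G$ pointwise a.e.; because $f>0$ a.e.\ makes $F$ a homeomorphism carrying full-measure sets to full-measure sets, transporting back gives $g_\ve(y)=h_\ve(F(y))\to g(y)$ for a.e.\ $y\in(\alpha,\beta)$.

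For the $L^1$ statement I would establish the uniform bound $\int_0^L|h_\ve|\,dw\le\int_0^L|G|\,dw$ by Tonelli and the unit mass of $\kappa_\ve$, verify convergence on the dense class of continuous $G$ (where $h_\ve\to G$ uniformly on compacta, the boundary layer near $w=L$ contributing negligibly in $L^1$ since it has small length and $|h_\ve|\le\|G\|_\infty$ there), and conclude by the usual $\ve$–$\delta$ density argument. The delicate point, and the one I expect to be the main obstacle, is that these estimates are clean in the variable $w$, i.e.\ against the measure $dF=f\,dz$, whereas the statement asks for convergence in $L^1((\alpha,\beta))$ with respect to Lebesgue measure $dy$; the change of variables carries $dy$ to the weighted measure $(f\circ F^{-1})^{-1}\,dv$, and one must check that this weight destroys neither the uniform bound nor the convergence. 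This is immediate when $f$ is bounded away from $0$ and $\infty$ (the situation in the applications of the lemma), and in general forces one to run the Lebesgue-differentiation step directly against $f\,dz$; verifying that no mass escapes through this weight is where the real work lies.
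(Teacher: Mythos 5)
Your change of variables is exactly the paper's proof: the authors substitute $v=B(z)$ with $B(z)=\int_\alpha^z f(u)\,du$, observe that $v\mapsto \ve^{-2}e^{-v/\ve^2}$ is an approximate identity, and transport Lebesgue points back through the absolutely continuous homeomorphism $B$ (your $F$). Your pointwise argument is a correct, more detailed version of what the paper delegates to Folland's Theorem 8.15, and your reduction of the second formula to the first by reflection matches the paper's treatment. On the pointwise a.e.\ statement you and the paper coincide and the argument is sound.

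The $L^1$ step that you flag as ``where the real work lies'' is, however, a genuine gap in your write-up --- you identify it but do not close it --- and it is precisely the point the paper's own proof elides. After the substitution, the approximate-identity theorem gives $h_\ve\to G$ in $L^1(dv)$, which translates to $\int_\alpha^\beta|g_\ve-g|\,f\,dy\to0$, i.e.\ convergence in $L^1(f\,dy)$; since $f$ is only assumed positive a.e.\ and integrable, the density $dy/dv=1/\bigl(f\circ F^{-1}\bigr)$ is in general unbounded and integrable rather than bounded, so convergence in $L^1(dv)$ does not transfer to $L^1(dy)$ without a further argument (the paper simply asserts $L^1((\alpha,\beta))$ convergence at this point, citing Folland 8.14, so it shares the omission). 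A related issue that both you and the paper pass over silently: $\int_0^L|G|\,dv=\int_\alpha^\beta|f(z)g(z)|\,dz$, so even the finiteness of $g_\ve$ and the applicability of the approximate-identity theorems in the $v$ variable require $fg\in L^1$, which is not implied by $f,g\in L^1$. Two ways to repair the $L^1$ claim: (i) combine your pointwise convergence with a generalized dominated convergence argument, using the analogous smoothing of $|g|$ as dominating sequence, which reduces the problem to showing $\int_\alpha^\beta|g|_\ve\,dy\to\int_\alpha^\beta|g|\,dy$ --- itself requiring the Lebesgue-differentiation step to be run against the measure $f\,dz$, as you anticipate; or (ii) note that in every application in this paper one has $f=2a$ and $g=a^{-1}\ind_{(x,r)}$, so $fg$ is bounded and the needed estimates are available there. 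As it stands your proof, like the paper's, fully establishes only the pointwise part and the $L^1(f\,dy)$ convergence.
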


The proof is given in Appendix \ref{app:comparisonprinciple}. Note that this lemma provides a positive answer to the
 question raised by Bafico and Baldi in \cite[Remark~b~in~Section~6]{BaficoBaldi1982} on whether
\cite[Proposition 3.3]{BaficoBaldi1982} still holds under the sole assumption of \( \int_0^r 1/a(z)dz < + \infty \).

\section{Positive drifts}\label{sec:positive_drift}
This section is dedicated to the proof of Theorem \ref{thm:ZeroNoisePositiveDrift111}. In order to prove the theorem, we first prove the following:
\begin{theorem}\label{thm:ZeroNoiseUnifPositive}
	Let $a\in L^\infty(\mbR)$ and assume that there exist positive constants $\delta_0,c_->0$
	such that
	\begin{equation}\label{eq:assumption_c_pm}
	a(x)\geq c_- \quad \text{for a.e. } x\in(-\delta_0,\infty).
	\end{equation}
	Then we have the uniform convergence in probability
	\begin{equation}\label{eq:result}
		\|X_\ve- \psi_+\|_{C([0,T])}\overset{P}\to 0 \quad \text{as } \ve\to0 \text{ for all }T>0.
	\end{equation}
\end{theorem}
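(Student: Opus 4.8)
The plan is to reduce the path statement \eqref{eq:result} to the convergence of hitting times, and then to compute those hitting times via the scale function, the speed measure, and the approximate-identity Lemma \ref{lem:approxidentity}. Note first that, since $a\ge c_->0$ on $(-\delta_0,\infty)$, the map $A_+(x)=\int_0^x a(z)^{-1}\,dz$ is finite, strictly increasing and onto $[0,\infty)$ (there is no singular point $R_+$), so $\psi_+=A_+^{-1}$ is a genuine strictly increasing continuous function with $\psi_+(0)=0$ and $\psi_+(t)\to\infty$, and its hitting time of a level $x>0$ equals $A_+(x)$.

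I would begin by recording that $\{X_\ve\}$ is tight in $C([0,\infty))$: by \eqref{eq:ode_pert} the process $X_\ve-\ve W$ is Lipschitz with constant $\|a\|_\infty$, and $\ve W\to0$ locally uniformly, so Arzel\`a--Ascoli applies. Since $\psi_+$ is deterministic, \eqref{eq:result} is equivalent to the weak convergence $X_\ve\Rightarrow\psi_+$, hence it suffices to show that every weak subsequential limit is $\delta_{\psi_+}$. Given a weakly convergent subsequence $X_{\ve_n}\Rightarrow\mu$, pass to a.s.\ locally uniformly convergent copies $\tilde X_n\to\tilde X$ via the Skorokhod representation; these copies share the hitting-time laws of the $X_{\ve_n}$. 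I would then apply Corollary \ref{cor:ConvergenceOfPaths} with $\xi_n=\tilde X_n$ and $\xi_0=\psi_+$, whose hypotheses reduce to proving, for every fixed rational $x>0$,
\[
\tau_\ve^x:=\inf\{t\ge0:X_\ve(t)\ge x\}\ \overset{P}\to\ A_+(x)\qquad(\ve\to0),
\]
since convergence in probability to the constant $A_+(x)$ transfers to the equidistributed copies. The corollary then forces $\tilde X=\psi_+$ a.s., i.e.\ $\mu=\delta_{\psi_+}$, which gives \eqref{eq:result}. The entire problem thus reduces to the displayed hitting-time limit.

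To analyze $\tau_\ve^x$ I would fix $x_1\in(-\delta_0,0)$ and work on the bounded interval $(x_1,x)\subset(-\delta_0,\infty)$, where $a\ge c_-$; this confines the analysis to the region where the lower bound holds and so sidesteps the total absence of information about $a$ below $-\delta_0$. Using the scale function $s_\ve$ from \eqref{eq:weights} and Theorem \ref{thm:exit_time}\ref{thm:exit_time1}, one has $\Pr^0(\tau_\ve^{x_1}<\tau_\ve^x)=s_\ve(x)/(s_\ve(x)-s_\ve(x_1))$. Because $a\ge c_-$ gives $s_\ve(x)=O(\ve^2)$ while $-s_\ve(x_1)\to\infty$ exponentially fast, this probability tends to $0$, so $\tau_\ve^x$ coincides with the two-sided exit time $\theta_\ve:=\tau_\ve^{x_1}\wedge\tau_\ve^x$ on an event of probability tending to $1$; it therefore suffices to prove $\theta_\ve\to A_+(x)$ in probability. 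For the mean, Theorem \ref{thm:exit_time}\ref{thm:exit_time3} (equivalently Remark \ref{rem:harmonic_functions}\ref{thm:exit_time2}) gives $\Exp^0[\theta_\ve]=\int_{x_1}^x G_\ve(0,y)\,m_\ve(dy)$. The part $y\in(x_1,0)$ carries the prefactor $s_\ve(x)/(s_\ve(x)-s_\ve(x_1))\to0$, while the part $y\in(0,x)$ carries the prefactor $-s_\ve(x_1)/(s_\ve(x)-s_\ve(x_1))\to1$ and, after a Fubini rearrangement, equals $\int_0^x g_\ve(w)\,dw$ with
\[
g_\ve(w)=\int_0^w\exp\Bigl(-\int_y^w\frac{2a(u)}{\ve^2}\,du\Bigr)\frac{2a(y)}{\ve^2}\,\frac{1}{a(y)}\,dy .
\]
Lemma \ref{lem:approxidentity} with $f=2a$ and $g=1/a$ gives $g_\ve\to1/a$ in $L^1(0,x)$, whence $\Exp^0[\theta_\ve]\to\int_0^x a^{-1}=A_+(x)$.

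Finally I would upgrade the convergence of the mean to convergence in probability by controlling the variance. With $u_\ve(y)=\Exp^y[\theta_\ve]$ solving $Lu_\ve=-1$ and vanishing on $\{x_1,x\}$, It\^o's formula yields $\theta_\ve-\Exp^0[\theta_\ve]=\int_0^{\theta_\ve}\ve\,u_\ve'(X_\ve(s))\,dW(s)$, and Theorem \ref{thm:exit_time}\ref{thm:exit_time3} applied with $f=\ve^2(u_\ve')^2$ gives $\Var(\theta_\ve)=\int_{x_1}^x G_\ve(0,y)\,\frac{2u_\ve'(y)^2}{s_\ve'(y)}\,dy$. Since $G_\ve(0,y)/s_\ve'(y)=O(\ve^2)$ on $(0,x)$ and $G_\ve$ vanishes at both endpoints (suppressing the boundary layers of $u_\ve'$), this is $O(\ve^2)\to0$, so $\theta_\ve\to A_+(x)$ in $L^2$; combined with $\Pr^0(\tau_\ve^x=\theta_\ve)\to1$ this proves $\tau_\ve^x\to A_+(x)$ in probability and closes the reduction. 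I expect the main obstacle to be exactly this quantitative step, together with the vanishing of the $y\in(x_1,0)$ part of the mean: both rest on delicate Laplace-type exponential asymptotics in which the one-sided lower bound $a\ge c_-$ on $(-\delta_0,\infty)$ must be invoked with care, and Lemma \ref{lem:approxidentity} is the device that converts these singular limits into the clean value $\int_0^x a^{-1}$.
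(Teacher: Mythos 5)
Your proposal is correct and its skeleton coincides with the paper's: reduce \eqref{eq:result} to convergence in probability of hitting times via the Skorokhod representation and Corollary \ref{cor:ConvergenceOfPaths}, kill the exit through the left endpoint with the scale-function formula of Theorem \ref{thm:exit_time}\ref{thm:exit_time1}, and compute the mean exit time through the Green-function identity of Theorem \ref{thm:exit_time}\ref{thm:exit_time3} combined with Lemma \ref{lem:approxidentity} (this is exactly the chain \eqref{eq:ProbabilityFirstExit}--\eqref{eq:ConvergenceOfExpectationsExits} in Lemma \ref{lem:properties_of_time}). Where you genuinely diverge is the fluctuation bound. The paper proves \eqref{eq:VanishingVariance} probabilistically: it rescales $Y_\ve(t)=\ve^{-2}X_\ve(\ve^2t)$, splits the passage time into independent increments via the strong Markov property, and dominates each increment by the first-passage time of $c_-t+B(t)$, giving $\Var(\tau^x_\ve)\le Cx\ve^2$; since that argument needs a lower bound on $a$ on all of $\mbR$, the paper then patches by replacing $a$ with $c_-$ below $-\delta$ and noting the two diffusions agree up to $\tau_\ve^{-\delta}$, which is infinite with probability tending to one. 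Your route --- the It\^o/Green-function identity $\Var(\theta_\ve)=\int_{x_1}^x G_\ve(0,y)\,2u_\ve'(y)^2/s_\ve'(y)\,dy$ estimated by Laplace asymptotics --- stays entirely inside $(x_1,x)\subset(-\delta_0,\infty)$ and so dispenses with the patching step; the price is a more delicate analytic estimate.

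Two points in your variance step must be carried out in full for the argument to close. First, the uniform bound $G_\ve(0,y)/s_\ve'(y)=O(\ve^2)$ is not by itself enough: decomposing $u_\ve'=v_1+v_2$ with $v_1(y)=-2\Phi(y)\int_{x_1}^y(\ve^2\Phi(z))^{-1}dz$ (uniformly bounded by $1/c_-$) and $v_2$ the boundary-layer term proportional to $\Phi$, one has $\int_{x_1}^x v_2^2\,dy=O(\ve^{-2})$, so the crude product is only $O(1)$. The rescue is precisely your parenthetical remark: since $\Phi\propto s_\ve'$, the $v_2$-contribution reduces to a constant times $s_\ve(x)\,(-s_\ve(x_1))/(s_\ve(x)-s_\ve(x_1))^2$, which is exponentially small because $-s_\ve(x_1)$ blows up; this computation is the real content of the step and should be written out. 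Second, since $a$ is only measurable, $u_\ve$ is $W^{2,\infty}$ rather than $C^2$, so the identity $\theta_\ve=u_\ve(0)+\ve\int_0^{\theta_\ve}u_\ve'(X_\ve(s))\,dW(s)$ requires the It\^o--Krylov formula, together with boundedness of $u_\ve'$ for each fixed $\ve$ to justify that the stochastic integral is a true martingale. Neither is a gap in the idea, but both are nontrivial bookkeeping.
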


\begin{proof}[Proof of Theorem \ref{thm:ZeroNoiseUnifPositive}]
The proof consists of these steps:
\begin{enumerate}[label=\arabic*.]
\item Show weak relative compactness of $\{X_\ve\}_\ve$.
\item Show that $\bar X_0$ is strictly increasing, where $\bar X_0$ is a limit point of $\{X_\ve\}_\ve$.
\item Reduce to proving convergence of the hitting
 times $\tau^\ve\to\tau$, see Lemma \ref{lem:timeinversion}.
\end{enumerate}

\medskip\noindent \textit{Step 1:}
For any $T>0$ the family $\{X_\ve\}_{\ve\in (0,1]}$ is weakly relatively compact in $C([0,T])$
(see e.g.~\cite{Billingsley1999}).
Since $\psi_+$ is non-random, the convergence statement \eqref{eq:result} is equivalent to the weak convergence
\[
X_\ve\Rightarrow  \psi_+ \qquad \text{ in } C([0,T]) \text{ as } \ve\to0 .
\]
for any $T>0$. To prove the latter, it suffices to verify that if $\{X_{\ve_k}\}_k$ is any convergent subsequence, then $\psi_+$ is its limit.

\medskip \noindent \textit{Step 2:}
Assume that $X_{\ve_k}\Rightarrow \bar X_0$ as $k\to\infty$. Since
\[
X_{\ve_k}(t)=\int_0^t \drift(X_{\ve_k}(s))\, ds+\ve_k W(t) \qquad \forall\ t\in[0,T],
\]
and $\ve_k W \overset{P}{\to} 0$, Slutsky's theorem implies that also
\begin{equation}\label{eq:Lip}
\int_0^\cdot \drift(X_{\ve_k}(s))\, ds \Rightarrow \bar X_0 \qquad \text{in }C([0,T]).
\end{equation}
By Skorokhod's representation theorem \cite[Theorem 1.6.7]{Billingsley1999}, we may assume that the convergence in \eqref{eq:Lip} happens almost surely. Since $c_-\leq a \leq c_+$ (for some $c_+>0$), we conclude that
\[
c_-\leq \frac{\bar X_0(t_2)-\bar X_0(t_1)}{t_2-t_1} \leq c_+ \qquad \forall\ t_1,t_2\in[0,T],  \text{ almost surely.}
\]
In particular, $\bar{X}_0$ is strictly increasing.

\medbreak \noindent \textit{Step 3:}
Notice that assumption \eqref{eq:assumption_c_pm} implies that $\lim_{t\to\infty}\psi_+(t)=+\infty.$
Define
\[
\tau_\ve^x\coloneqq\inf\{t\geq 0\,:\, X_\ve(t)=x\}, \qquad \tau_0^x \coloneqq \inf\{t\geq 0 \,:\, \psi_+(t)=x\} = A(x)
\]
where $A(x)\coloneqq \int_0^x a(z)^{-1}\,dz$ (cf.~\eqref{eq:deterministicsolution}).
By Corollary \ref{cor:ConvergenceOfPaths} it is enough to show convergence in probability of $\tau_\ve$:
\begin{equation}\label{eq:conv_hitting}
\tau_\ve^x\overset{P}\to A(x) \qquad\text{as }\ve\to0 \text{ for every } x\in\mbQ\cap [0,\infty).
\end{equation}
To check \eqref{eq:conv_hitting} it is sufficient to verify that
\begin{subequations}
\begin{alignat}{2}
&\lim_{\ve\to0} \Exp(\tau_\ve^x) = A(x) &\qquad&\text{for any } x\in\mbQ\cap [0,\infty), \label{eq:conv_hitting_expectation} \\
&\lim_{\ve\to0}\Var(\tau_\ve^x)= 0 &&\text{for any } x\in\mbQ\cap [0,\infty). \label{eq:conv_hitting_variance}
\end{alignat}
\end{subequations}
We prove these properties under less restrictive conditions on $a$, given in the lemma below.

\begin{lemma}\label{lem:properties_of_time}
Let $R,\delta>0$ and let $a\in L^\infty(\mbR)$ satisfy $a > 0$ a.e.~in $(-\delta,R)$. Assume that the Osgood-type condition
\begin{equation}\label{eq:positivedriftcondition}
	\int_{0}^R \frac{1}{a(z)}\, dz<\infty
\end{equation}
is satisfied. Denote $A(r)\coloneqq\int_0^r a(z)^{-1}\,dz$ for $r\in[0,R]$. Then
\begin{subequations}
\begin{alignat}{2}
&\lim_{\ve\to0}\Pr^x\big(\tau^{-\delta}_\ve>\tau^{R}_\ve\big)=1 &&\forall \ 0\leq x\leq R, \label{eq:ProbabilityFirstExit} \\
&\lim_{\ve\to0}\Exp^x\big(\tau^{-\delta}_\ve\wedge \tau^r_\ve\big) = A(r)
{-A(x)} &\qquad& \forall\ 0\leq x<r\leq R. \label{eq:ExpectedTrajectory} \\
\intertext{{Moreover, if $a(x)\geq c_-$ for $x\in(-\infty,-\delta)$ for some constant $c_->0$, then also}}
& {\lim_{\ve\to0}\Exp^0 ( \tau^r_\ve) =A(r)} &&\forall\ 0<r\leq R, \label{eq:ConvergenceOfExpectationsExits} \\
\intertext{ and if $a(x)\geq c_->0$ for all $ x\in\mbR$, then}
&{\lim_{\ve\to0}\Var^0(  \tau^r_\ve) =0} &&\forall\ 0<r\leq R. \label{eq:VanishingVariance}
\end{alignat}
\end{subequations}
\end{lemma}

We finalize the proof of Theorem \ref{thm:ZeroNoiseUnifPositive} and then prove the claims of Lemma \ref{lem:properties_of_time}  separately. Define the function
\[
\tilde a(x):=\begin{cases} a(x) & \text{if } x>-\delta,\\ c_- & \text{if } x\leq -\delta,\end{cases}
\]
and denote the solution to the corresponding stochastic differential equation by $\tilde X_\ve$.  It follows from Lemma \ref{lem:properties_of_time} that
\[
\|\tilde X_\ve- \psi_+\|_{C([0,T])}\overset{P}\to 0 \qquad \text{as } \ve\to0 \text{ for all }T>0.
\]
Uniqueness of the solution yields $\Pr\bigl(\tilde X_\ve(t)=  X_\ve(t) \text{ for } t\leq \tau_\ve^{-\delta}\bigr)=1.$
It is easy to see that $\Pr(\tau_\ve^{-\delta}=\infty)\to1 $ as $\ve\to0.$ This completes the proof of Theorem  \ref{thm:ZeroNoiseUnifPositive}.
\end{proof}

\begin{proof}[Proof of \eqref{eq:ProbabilityFirstExit} in Lemma \ref{lem:properties_of_time}]
By Theorem \ref{thm:exit_time}\ref{thm:exit_time1}, we can write
\[
\Pr^x(\tau^r_\ve<\tau^{-\delta}_\ve) = \frac{s_\ve(x)}{s_\ve(r)} \geq \frac{s_\ve(0)}{s_\ve(r)}
\]
for every $x\in[0,r]$, where (cf.~\eqref{eq:eq_scale}) 
\begin{equation}\label{eq:scalefunction}
s_\ve(x)\coloneqq\int_{-\delta}^xe^{-B(y)/\ve^2}\, dy, \qquad B(y) \coloneqq 2\int_{-\delta}^y a(z) dz.
\end{equation}
We have
\begin{equation}\label{eq:scale-function-estimate}
s_\ve(0) = \int_{-\delta}^0 e^{-B(y)/\ve^2}\,dy \geq \delta e^{-B(0)/\ve^2}
\end{equation}
since $B$ is nondecreasing. For sufficiently small $\ve>0$ we can find $y_\ve>0$ such that $B(y_\ve)=B(0)+\ve$. Note that $y_\ve\to0$ as $\ve\to0$. Again using the fact that $B$ is nondecreasing, we can estimate
\begin{align*}
s_\ve(r) &= s_\ve(0)+\int_0^r e^{-B(y)/\ve^2}\,dy
\leq s_\ve(0) + y_\ve e^{-B(0)/\ve^2} + (r-y_\ve)e^{-B(y_\ve)/\ve^2} \\
&\leq e^{-B(0)/\ve^2}\Bigl(s_\ve(0) + y_\ve + re^{-1/\ve}\Bigr).
\end{align*}
Using \eqref{eq:scale-function-estimate}, we get
\[
\Pr^x(\tau^r_\ve<\tau^{-\delta}_\ve) \geq \frac{s_\ve(0)e^{B(0)/\ve^2}}{s_\ve(0)e^{B(0)/\ve^2} + y_\ve+re^{-1/\ve}}
\geq \frac{\delta}{\delta + y_\ve+re^{-1/\ve}}.
\]
Since $y_\ve+re^{-1/\ve}\to0$ as $\ve\to0$, we conclude that $\Pr^x(\tau^r_\ve<\tau^{-\delta}_\ve)\to1$ as $\ve\to0$.
\end{proof}

\begin{proof}[Proof of \eqref{eq:ExpectedTrajectory} in Lemma \ref{lem:properties_of_time}]
We will show that for any $r\in(0,R]$ and $x\in[0,r]$, we have
$\lim_{\ve\to0} \Exp^x\big(\tau^{-\delta}_\ve \wedge \tau^r_\ve\big) = \int_x^r\drift(z)^{-1}dz.$
It follows from Theorem \ref{thm:exit_time}\ref{thm:exit_time3}\ with $x_1=-\delta$, $x_2=r$, $f\equiv1$,
 $s= s_\ve$ (cf.~\eqref{eq:scalefunction}) and $m=m_\ve$ (cf.~\eqref{eq:463}) that for any $\delta>0$ and $x\in[0,r]$,
 \begin{equation}\label{eq:668}
\begin{aligned}
&\Exp^x\big(\tau^{-\delta}_\ve \wedge \tau^{r}_\ve\big)
= \int_{-\delta}^r G_\ve(x,y)\,m_\ve(dy) \\
&= \int_{-\delta}^x G_\ve(y,x)\,m_\ve(dy)+\int_x^r G_\ve(x,y)\,m_\ve(dy) \\
&= \int_{-\delta}^x \frac{s_\ve(y)(s_\ve(r)-s_\ve(x))}{s_\ve(r)}\,
m_\ve(dy)+ \int_x^r \frac{s_\ve(x)(s_\ve(r)-s_\ve(y))}{s_\ve(r)}\,m_\ve(dy) \\
&= \int_{-\delta}^x \underbrace{\frac{s_\ve(y)}{s_\ve(r)}}_{\eqqcolon\, p_\ve(y)} (s_\ve(r)-s_\ve(x))\,
m_\ve(dy) + \underbrace{\frac{s_\ve(x)}{s_\ve(r)}}_{=\,p_\ve(x)} \int_x^r (s_\ve(r)-s_\ve(y))\, m_\ve(dy) \\
&= \int_{-\delta}^x p_\ve(y)\left[
\int_x^r\exp\left(-\int_{-\delta}^z\frac{2 \drift(u)}{\ve^2}du\right) dz\right]
\frac{2}{\ve^2} \exp\left(\int_{-\delta}^y\frac{2 \drift(z)}{\ve^2}dz\right) dy \\
&\quad + p_\ve(x)\int_x^r\left[
\int_{y}^r\exp\left(-\int_{-\delta}^z\frac{2 \drift(u)}{\ve^2}du\right) dz \right]
\frac{2}{\ve^2} \exp\left(\int_{-\delta}^y\frac{2 \drift(z)}{\ve^2}dz\right) dy \\
&= \int_{-\delta}^xp_\ve(y) \int_x^r\exp\left(-\int_y^z\frac{2 \drift(u)}{\ve^2}du\right)\frac{2}{\ve^2} \,dz
 dy \\
&\quad + p_\ve(x)\int_x^r\int_{y}^r\exp\left(-\int_y^z\frac{2 \drift(u)}{\ve^2}du\right)\frac{2}{\ve^2} \,dzdy \\
&= {
\int_{-\delta}^xp_\ve(y) \int_y^r\exp\left(-\int_y^z\frac{2 \drift(u)}{\ve^2}du\right)
\frac{2 \drift(z)}{\ve^2} \frac{\ind_{(x,r)}(z)}{ \drift(z)} \,dz
 dy }\\
&\quad + p_\ve(x)\int_x^r\int_{y}^r\exp\left(-\int_y^z\frac{2 \drift(u)}{\ve^2}du\right)
\frac{2 \drift(z)}{\ve^2} \frac{1}{ \drift(z)} \,dzdy \\
&= I_\ve + \mathit{II}_\ve.
\end{aligned}
\end{equation}
By Theorem \ref{thm:exit_time}\ref{thm:exit_time1} we have $p_\ve(x) = \Pr^x(\tau_\ve^{-\delta}>\tau_\ve^r)$, and \eqref{eq:ProbabilityFirstExit} in Lemma \ref{lem:properties_of_time} implies that $\lim_{\ve\to0}p_\ve(x)=1$ for every $x\in[0,r]$. Letting $f(z)=2a(z)$ and $g(z) = \frac{1}{\drift(z)}\ind_{(x,r)}(z)$ for $z\in[0,r]$, we see that the $z$-integral in $\mathit{II}_\ve$ can be written as
\[
{\int_y^r\exp\left(-\int_y^z\frac{f(u)}{\ve^2}du\right)\frac{f(z)}{\ve^2}g(z) \,dz.}
\]
 Note that $f,g\in L^1([0,r])$, by \eqref{eq:positivedriftcondition}. Thus, we can apply Lemma \ref{lem:approxidentity} with $\alpha=0$, $\beta=r$ to get
\[
g_\ve(y)\coloneqq\int_y^r\exp\left(-\int_y^u\frac{2 \drift(z)}{\ve^2}dz\right)\frac{2}{\ve^2} \,du \to g(y)
\]
in $L^1([0,r])$ and pointwise a.e.\ as $\ve\to0$, so that
\[
\mathit{II}_\ve \to \int_x^r g(y)\,dy = \int_x^r\frac{1}{a(y)}\,dy.
\]
A similar manipulation will hold for $I_\ve$, with the same functions $f$ and $g$, yielding
\[
I_\ve \to \int_{-\delta}^x \frac{1}{a(y)}\ind_{(x,r)}(y)\,dy = 0.
\]
Putting these together gives
\[
\lim_{\ve\to0}\Exp^x\big(\tau^{-\delta}_\ve \wedge \tau^{r}_\ve\big)
= \lim_{\ve\to0} I_\ve+\mathit{II}_\ve = \int_x^r \frac{1}{a(y)}\,dy.
\]
This concludes the proof.
\end{proof}

\begin{proof}[Proof of \eqref{eq:ConvergenceOfExpectationsExits} in Lemma \ref{lem:properties_of_time}]
{For any $x\in[0,r)$, note that $\lim_{\delta\to+\infty} \Exp^x(\tau^{-\delta}_\ve\wedge \tau^r_\ve)=\Exp^x(\tau^r_\ve)$.
Using \eqref{eq:668} and the assumption $a\geq c_->0$ it is easy to obtain the uniform estimates for
expectations and  to see that $\lim_{\ve\to0} \Exp^0(\tau^r_\ve)= A(r).$}
\end{proof}

\begin{proof}[Proof of \eqref{eq:VanishingVariance} in Lemma \ref{lem:properties_of_time}]
Let $X_\ve$ solve \eqref{eq:ode_pert} and define $Y_\ve(t) = \ve^{-2}X_\ve(\ve^2t)$. Substitution into \eqref{eq:ode_pert} then gives
\begin{equation}\label{eq:scaledSDE}
Y_\ve(t) = \int_0^t \drift\big(\ve^2 Y_\ve(s)\big)\,ds + B(t)
\end{equation}
where $B(t)=\ve^{-1}W(\ve^2t)$ is another Brownian motion. Applying the same scaling to $\tau$, we see that if $\pi^n_\ve$ is the exit time of $Y_\ve$ from $(-\infty,n]$ then $\pi^n_\ve = \ve^{-2}\tau^{\ve^2n}_\ve$.
%
 To this end, fix $x>0$, let $n=\ve^{-2} x$ (assumed for simplicity to be an integer) and define the increments $\zeta^1_\ve=\pi^1_\ve$, $\zeta^2_\ve=\pi^2_\ve-\pi^1_\ve$, $\dots$, $\zeta^n_\ve = \pi^n_\ve-\pi^{n-1}_\ve$. The strong Markov property ensures that $\zeta^1_\ve,\dots,\zeta^n_\ve$ are independent random variables. Hence,
\begin{align*}
\Var(\tau^x_\ve) &= \ve^4\Var(\pi^n_\ve) = \ve^4\Var\Biggl(\sum_{k=1}^n\zeta^k_\ve\Biggr) \\
&= \ve^4\sum_{k=1}^n\Var(\zeta^k_\ve).
\end{align*}
Hence, if we can bound $\Var(\zeta^k_\ve)$ by a constant independent of $\ve$, then $\Var(\tau^x_\ve) \leq \ve^4Cn = C x \ve^2 \to 0$, and we are done. To this end, note first the naive estimate $\Var(\zeta^k_\ve)\leq \Exp((\zeta^k_\ve)^2)$. Next, we invoke the comparison principle Theorem \ref{thm:comparisonThm} between $Y_\ve$ and
\[
Z_\ve(t)\coloneqq\int_0^t c_-\,dt+B(t) = c_-t+B(t),
\]
yielding $Z_\ve(t)\leq Y_\ve(t)$ for all $t\geq0$, almost surely. Hence, $\pi^n_\ve \leq \tilde{\pi}^n_\ve$, where $\tilde{\pi}^n_\ve$ is the exit time of $Z_\ve$, and correspondingly, $\zeta^k_\ve\leq \tilde{\zeta}^k_\ve$ for $k=1,\dots,n$. Since $(\tilde{\zeta}^k_\ve)_{k=1}^n$ are identically distributed, we get
\[
\Exp\big((\zeta^k_\ve)^2\big) \leq \Exp\big((\tilde{\zeta}^k_\ve)^2\big) = \Exp\big((\tilde{\zeta}^1_\ve)^2\big) = \Exp\big((\tilde{\pi}^1_\ve)^2\big).
\]
To estimate the latter, we have (letting $p_t = \mathrm{Law}(B_t) = \frac{1}{\sqrt{2\pi t}}e^{-|\cdot|^2/(2t)}$)
\begin{align*}
\Pr\big(\tilde{\pi}^1_\ve > t\big) &= \Pr\big(\tilde{\pi}^1_\ve > t,\ c_-t+B_t<1\big) + \underbrace{\Pr\big(\tilde{\pi}^1_\ve > t,\ c_-t+B_t \ge 1\big)}_{=\;0} \\
&\leq \Pr\big(c_-t+B_t<1\big) = \Pr\big(B_t<1-c_-t\big) \\
&= \int_{-\infty}^{1-c_-t} \frac{1}{\sqrt{2\pi t}}\exp\biggl(-\frac{|x|^2}{2t}\biggr)\,dx \\
&= \frac{1}{\sqrt{2\pi}}\int_{-\infty}^{(1-c_-t)/\sqrt{t}} \exp\biggl(-\frac{|y|^2}{2}\biggr)\,dy.
\end{align*}
It follows that
\[ \Exp((\tilde{\pi}^1_\ve)^2) = \int_0^\infty 2t \Pr(\tilde{\pi}^1_\ve > t)\,dt
\leq \frac{1}{\sqrt{2\pi}}\int_0^\infty 2t\int_{-\infty}^{(1-c_-t)/\sqrt{t}} \exp\left(-\frac{|y|^2}{2}\right)\,dy\,dt < \infty, \]
and we are done.
\end{proof}

Using the above theorem and standard comparison principles, we extend the result
to drifts satisfying an Osgood-type condition:
\begin{lemma}\label{lem:ZeroNoiseOsgood}
	Let $a\in L^\infty(\mbR)$ satisfy $a>0$ a.e.~in $(-\delta_0,\infty)$ for some $\delta_0>0$. Assume that for all $R>0$,
	\[
	\int_{0}^R \frac{1}{a(z)} dz<\infty.
	\]
	Then, for any $T>0$, $X_\ve$ converges to $\psi_+$:
	\begin{equation}\label{eq:C22}
		\big\|X_\ve-\psi_+\big\|_{C([0,T])} \overset{P} \to 0 \qquad\text{as } \ve\to0 \text{ for all }  T>0
	\end{equation}
(where $\psi_+$ is the maximal solution \eqref{eq:maximalsolutions}).
\end{lemma}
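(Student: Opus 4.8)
The plan is to follow the same three-step strategy as in the proof of Theorem~\ref{thm:ZeroNoiseUnifPositive}: reduce the claim \eqref{eq:C22} to the convergence in probability of the hitting times
\[
\tau^x_\ve \overset{P}\to A(x)\qquad\text{for every }x\in\mbQ\cap[0,\infty),\qquad A(x)\coloneqq\int_0^x\frac{1}{a(z)}\,dz,
\]
and then invoke the path-reconstruction machinery of Corollary~\ref{cor:ConvergenceOfPaths}. Here $\psi_+=A^{-1}$ is strictly increasing with $\psi_+(0)=0$; moreover $a\in L^\infty$ forces $1/a\ge 1/\|a\|_\infty$, so $A(\infty)=\infty$ and hence $\psi_+(t)\to\infty$, exactly the hypotheses Corollary~\ref{cor:ConvergenceOfPaths} requires of the limit. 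To pass from hitting-time convergence to the in-probability statement \eqref{eq:C22}, I would argue along subsequences: by weak relative compactness of $\{X_\ve\}$ (recall $a\in L^\infty$) it suffices to show every weakly convergent subsequence has limit $\psi_+$; passing to a Skorokhod representation makes the paths converge a.s., the hitting-time convergence survives (convergence in probability to the constant $A(x)$ is a distributional statement, preserved under the coupling), and Corollary~\ref{cor:ConvergenceOfPaths} then identifies the a.s.\ limit as $\psi_+$.

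The obstruction, compared with Theorem~\ref{thm:ZeroNoiseUnifPositive}, is that we no longer assume $a\ge c_->0$, so the vanishing-variance estimate \eqref{eq:VanishingVariance}—whose proof relied on comparing the rescaled process with $c_-t+B(t)$—is unavailable. I therefore split $\tau^x_\ve\overset{P}\to A(x)$ into two one-sided estimates proved by different means. For the \emph{lower} tail, fix $c>0$ and set $a_c\coloneqq a\vee c$, with associated solution $X^c_\ve$, hitting times $\tau^{c,x}_\ve$, and $A_c(x)\coloneqq\int_0^x (a(z)\vee c)^{-1}\,dz$. Since $a_c\ge c>0$ everywhere, Theorem~\ref{thm:ZeroNoiseUnifPositive} applies to $a_c$ and gives $X^c_\ve\to\psi^c_+=A_c^{-1}$ uniformly in probability, whence $\tau^{c,x}_\ve\overset{P}\to A_c(x)$ (uniform convergence of paths to a strictly increasing continuous limit transfers to the hitting times). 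As $a_c\ge a$, the comparison principle Theorem~\ref{thm:comparisonThm} yields $X_\ve\le X^c_\ve$ and therefore $\tau^x_\ve\ge\tau^{c,x}_\ve$. Letting $c\downarrow0$, monotone convergence gives $A_c(x)\uparrow A(x)$, and combining these facts shows $\Pr(\tau^x_\ve<A(x)-\eta)\to0$ for every $\eta>0$.

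For the \emph{upper} tail I avoid the variance estimate altogether and use instead the mean exit-time convergence \eqref{eq:ExpectedTrajectory} of Lemma~\ref{lem:properties_of_time}, which holds under the present weak hypotheses. Fixing $\delta\in(0,\delta_0)$ and writing $S_\ve\coloneqq\tau^{-\delta}_\ve\wedge\tau^x_\ve$, formula \eqref{eq:ExpectedTrajectory} (with start $0$ and target $x$) gives $\Exp^0(S_\ve)\to A(x)$, while \eqref{eq:ProbabilityFirstExit} gives $\Pr(\tau^x_\ve<\tau^{-\delta}_\ve)\to1$, so that $S_\ve=\tau^x_\ve$ with probability tending to $1$. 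Writing $\Exp^0(S_\ve)=\int_0^\infty \Pr(S_\ve>t)\,dt$ and using the lower tail from the previous paragraph together with \eqref{eq:ProbabilityFirstExit}, one checks that $\Pr(S_\ve>t)\to1$ for every $t<A(x)$; hence $\int_0^{A(x)}\Pr(S_\ve>t)\,dt\to A(x)$ by bounded convergence, and since the full integral also tends to $A(x)$ we obtain $\int_{A(x)}^\infty\Pr(S_\ve>t)\,dt\to0$. Monotonicity of $t\mapsto\Pr(S_\ve>t)$ then forces $\Pr(S_\ve>A(x)+\eta)\to0$, and combined with $\Pr(\tau^x_\ve\neq S_\ve)\to0$ this gives $\Pr(\tau^x_\ve>A(x)+\eta)\to0$.

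The main difficulty is precisely this upper-tail step: without a uniform lower bound on $a$ the natural second-moment route is closed, and the substitute is the layer-cake argument above, which trades the variance bound for the convergence of the \emph{mean} of $S_\ve$ (available from \eqref{eq:ExpectedTrajectory}) plus the one-sided comparison bound. One should also note the minor bookkeeping that the behaviour of $a$ on $(-\infty,-\delta_0)$ never enters: the estimates \eqref{eq:ProbabilityFirstExit} and \eqref{eq:ExpectedTrajectory} depend only on $a|_{(-\delta,x)}$, and the comparison drift $a_c=a\vee c$ is defined on all of $\mbR$, so no extension hypothesis on the left is needed. Assembling the two tails yields $\tau^x_\ve\overset{P}\to A(x)$ for all rational $x\ge0$, and the reduction of the first paragraph then completes the proof.
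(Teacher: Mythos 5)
Your proof is correct, and it rests on the same two pillars as the paper's own argument: (i) a comparison (Theorem~\ref{thm:comparisonThm}) with a modification of the drift that is bounded below by a positive constant, so that Theorem~\ref{thm:ZeroNoiseUnifPositive} applies and yields a one-sided bound on the hitting times, and (ii) the convergence of the expected exit time \eqref{eq:ExpectedTrajectory} from Lemma~\ref{lem:properties_of_time}, all fed into Corollary~\ref{cor:ConvergenceOfPaths} after a Skorokhod representation along a weakly convergent subsequence. The differences are in execution. You take $a\vee c$ where the paper takes $a+\nicefrac1n$ (immaterial), and, more substantively, the paper upgrades the mean convergence to an almost-sure identity for the \emph{limiting} hitting time $\pi^x$: comparison gives $\pi^x\geq A_n(x)\to A(x)$ a.s., lower semicontinuity of hitting times plus Fatou gives $\Exp(\pi^x)\leq A(x)$, and a random variable that is a.s.\ at least $A(x)$ with mean $A(x)$ must equal $A(x)$ a.s. You instead prove the two-sided tail estimate $\tau^x_\ve\overset{P}\to A(x)$ for the \emph{pre-limit} hitting times, getting the lower tail from the comparison and the upper tail from a layer-cake argument on $\Exp^0(\tau^{-\delta}_\ve\wedge\tau^x_\ve)$ combined with \eqref{eq:ProbabilityFirstExit}. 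Your route buys a small amount of robustness --- it sidesteps the paper's (unproved) appeal to lower semicontinuity of hitting times along the a.s.-convergent subsequence and to Fatou --- at the cost of the extra (routine) verification that uniform convergence $X^c_\ve\to\psi^c_+$ to a strictly increasing limit transfers to convergence $\tau^{c,x}_\ve\overset{P}\to A_c(x)$ of the hitting times. Both arguments are sound and of comparable length.
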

\begin{proof}
As in the proof of Theorem \ref{thm:ZeroNoiseUnifPositive} we know that $\{X_\ve\}_\ve$
 is weakly relatively compact, so it has some weakly convergent subsequence $\{X_{\ve_k}\}_k$. Due to Skorokhod's representation theorem \cite[Theorem 1.6.7]{Billingsley1999} there exists a sequence of copies
$\tilde X_{\ve_k}$ of $X_{\ve_k}$   that satisfy the corresponding SDEs with Wiener processes $B_{\ve_k}$ and such that
$\{\tilde X_{\ve_k}\}_k$ converges almost surely to some continuous non-decreasing process $\tilde X$:
\begin{equation}\label{eq:conv_tilde}
\Pr\Bigl(\lim_{k\to\infty} \|\tilde X_{\ve_k}-\tilde X\|_{C([0,T])}=0 \quad \forall\ T>0\Bigr)=1.
\end{equation}
{The limit process is non-decreasing, so without loss of generality we may assume that function
$\drift$ is such that $\drift(x)=c_-$ for all $x\in(-\infty,-\delta_0),$  where $c_->0$ is a constant.}
Define \( \drift_n \coloneqq \drift + \nicefrac{1}{n} \),
{let $\tilde X_{n,\ve}$ be the corresponding stochastic
process and let $X_n$ denote the solution of the corresponding deterministic problem}. It holds for all \( n \in \mbN \) that \( \drift_n \geq \nicefrac{1}{n} \),
 thus the result above holds for \( \drift_n \).

Let $\pi^x$, $\pi^x_{\ve_k}$, $\pi^x_{n,\ve_k}$, $\tau^x_n$ and $\tau^x$ be the hitting
 times of $\tilde X$, $\tilde X_{\ve_k}$, $\tilde X_{n,\ve_k}$, $X_n$ and $\psi_+$, respectively. By the comparison principle Theorem \ref{thm:comparisonThm}, we know that
\begin{equation}\label{eq:ineq_limits1}
\tilde X_{n,\ve_k} \geq \tilde X_{\ve_k}, \qquad \text{or equivalently,} \qquad \pi^x_{n,\ve_k} \leq \pi^x_{\ve_k}
 \; \forall\ x
\end{equation}
{(cf.~Lemma~\ref{lem:timeinversion}).}
It follows 
from Theorem \ref{thm:ZeroNoiseUnifPositive} that $\tilde X_{n,\ve_k}\to X_n$ a.s.~as $k\to\infty$, which together with \eqref{eq:conv_tilde} and \eqref{eq:ineq_limits1} implies
\begin{equation}\label{eq:ineq_limits2}
X_n \geq \tilde X, \qquad\text{or equivalently,}\qquad \tau^x_{n} \leq \pi^x\;\forall\ x.
\end{equation}
The lower semi-continuity of a hitting time with respect to its process also implies that $\pi^x\leq \liminf_{k\to\infty} \pi^x_{\ve_k}$
a.s. for any $x\geq 0$. Hence, for any $x\geq 0$,
\begin{align*}
A(x)&=\lim_{n\to\infty}A_n(x) = \lim_{n\to\infty} \tau_n^x \leq \Exp(\pi^x) \\
 &\leq \Exp\Bigl(\liminf_{k\to\infty} \pi_{\ve_k}^x\Bigr)
 \leq \liminf_{k\to\infty} \Exp\bigl(\pi_{\ve_k}^x\bigr)
 = A(x),
\end{align*}
the last equality following from \eqref{eq:ExpectedTrajectory} in Lemma \ref{lem:properties_of_time}. Hence, $\Exp(\pi^x)=A(x)$ for all $x\geq0$, and since $\pi^x\geq\tau_n^x\to A(x)$ as $n\to\infty$, we conclude that $\pi^x=A(x)$ almost surely for every $x\geq0$,
so Corollary \ref{cor:ConvergenceOfPaths} implies that $\tilde X=A^{-1}=\psi_+$ almost surely. Since  $\psi_+$ is non-random, we have the uniform convergence in probability
\[
\Pr\biggl(\lim_{k\to\infty}\|X_{\ve_k}- \psi_+\|_{C([0,T])}=0 \quad\forall\ T>0\biggr)=1.
\]
And finally, since the limit $\psi_+$ is unique, we can conclude that the entire sequence $\{X_\ve\}_\ve$ converges.
\end{proof}

We are now ready to prove Theorem \ref{thm:ZeroNoisePositiveDrift111} under the additional condition that $a>0$ a.e.~in $(-\delta_0,0)$:
\begin{proof}[Proof of Theorem \ref{thm:ZeroNoisePositiveDrift111} for positive $a$]
The case when $
 \int_{0}^{R} \frac{dx}{a(x)\vee0}<\infty
$ for any $R>0$ (and hence, in particular, $a>0$ a.e.~in $(-\delta_0,\infty)$)
has been considered in Lemma \ref{lem:ZeroNoiseOsgood}. Thus, we can assume that there is some $R>0$ such that $a>0$ a.e.~on $(-\delta_0,R)$, and for any (small) $\delta>0$,
\begin{equation}\label{eq:osgoodblowup}
 \int_0^{R-\delta} \frac{dx}{a(x)}<\infty \quad\text{but}\quad
 \int_0^{R+\delta} \frac{dx}{a(x)\vee 0}=\infty.
\end{equation}
Recall that
\[
\psi_+(x)=
\begin{cases}
A^{-1}(x),& x\in[0,A(R)),\\
R, & x\geq A(R).
\end{cases}
\]
(Note that $A(R)$ may be equal to $\infty.$) The proof of the theorem consists of the following steps:
\begin{enumerate}[label=\arabic*.]
\item Prove the theorem for the stopped process $X_\ve(\cdot\wedge\tau^R_\ve)$
\item Prove the theorem for nonnegative drifts
\item Extend to possibly negative drifts.
\end{enumerate}

\noindent\textit{Step 1.}
Set $\widehat a_m(x)\coloneqq a(x)\ind_{x\leq R-\nicefrac{1}{m}}+\ind_{x>R-\nicefrac1m}$ for
$m\in\mbN$, and note that $\widehat a_m$ satisfies the conditions of
 Lemma \ref{lem:ZeroNoiseOsgood}. Let $\widehat{X}_{m,\ve} $ denote the solution to the
corresponding SDE, $\widehat{X}_{m} $ its limit, and $\widehat{\tau}_{m,\ve}^x,\ \widehat{\tau}_{m }^x$
the corresponding
hitting times. It follows from the uniqueness of a solution that
\[
\Pr\Bigl( \widehat{\tau}_{m,\ve}^{R-\nicefrac1m}=\widehat{\tau}_\ve^{R-\nicefrac1m}\Bigr)=1 \quad\text{and}\quad
\Pr\Bigl(\widehat{X}_{m,\ve}(t) = X_{\ve}(t) \quad\forall\ t\leq
\widehat{\tau}_\ve^{R-\nicefrac1m}\Bigr)=1.
\]
Thus, by Lemma \ref{lem:ZeroNoiseOsgood},
\begin{equation}\label{eq:605}
\begin{split}
\sup_{t\in[0,T]}\big|X_{\ve}\bigl(t\wedge \widehat{\tau}_\ve^{R-\nicefrac{1}{m}}\bigr)- A^{-1}\big(t\wedge \widehat{\tau}_\ve^{R-\nicefrac{1}{m}}\big)\big|
&\overset{P} \to 0 \qquad\text{as } \ve\to0 \text{ for all }  T>0,
\\
\sup_{t\in[0,T]}\big|\widehat{X}_{m,\ve}\bigl(t\wedge \widehat{\tau}_\ve^{R-\nicefrac{1}{m}}\bigr)- A^{-1}\bigl(t\wedge \widehat{\tau}_\ve^{R-\nicefrac1m}\bigr)\big|
&\overset{P} \to 0 \qquad\text{as } \ve\to0 \text{ for all }  T>0,
\end{split}
\end{equation}
for every $m\in\mbN$.

Let  $\overline X_0$ be a limit point of $\{X_\ve\}_\ve$ and
 $X_{\ve_k}\Rightarrow  \overline X_0$ as $k\to\infty.$
It follows from \eqref{eq:605} that $\overline X_0(\cdot\wedge \tau^{R-\nicefrac1m}_m) = A^{-1}(\cdot\wedge \tau^{R-\nicefrac1m}_m )$, and since $m$ is arbitrary, we have $\overline{X}_0(\cdot\wedge \tau^{R} ) = A^{-1}(\cdot\wedge \tau^{R} )$, that is, $\overline X_0(\cdot\wedge\tau^R) = \psi_+(\cdot\wedge\tau^R)$. In particular, the entire sequence of stopped processes converges, by uniqueness of the limit.

\medskip\noindent\textit{Step 2.}
Assume next, in addition to \eqref{eq:osgoodblowup}, that $a\geq0$ a.e.~in $\mbR$. Any limit point of $\{X_\ve\}_\ve$ is a non-decreasing process, so to prove the theorem it suffices to verify that for any $\delta>0$ and $M>0$
\[
\limsup_{k\to\infty}\Pr \bigl( \tau^{R+\delta}_{\ve_k}<M\bigr)=0
\]
Set $a_n\coloneqq a+\nicefrac{1}{n}$ and let $ X_{n,\ve}$ denote
 the solution to the corresponding SDE.
It follows from comparison Theorem \ref{thm:comparisonThm} that for any $M>0$
\[
\limsup_{k\to\infty}\Pr\bigl(\tau^{R+\delta}_{\ve_k}<M\bigr)\leq
\liminf_{n\to\infty}\limsup_{k\to\infty}\Pr\bigl(\tau^{R+\delta}_{n,\ve_k}<M\bigr).
\]
Theorem \ref{thm:ZeroNoiseUnifPositive} implies that $\lim_{\ve\to0} X_{n,\ve}=X_n=A^{-1}_n,$ so the right hand side of the above
inequality equals zero for any $M$. This concludes the proof if $a$ is non-negative everywhere.

\medskip\noindent\textit{Step 3.}
In the case that $a$ takes negative values, we consider the processes $X_\ve^+$ satisfying the corresponding SDEs with drift $a^+(x)\coloneqq a(x)\vee 0$. We have already proved in Step 2 that
\begin{alignat*}{2}
\bigl\|X_\ve^+-\psi_+\bigr\|_{C([0,T])} \overset{P}\to 0 && \text{as }\ve\to0 \;\forall\ T>0 \\
\intertext{(since $a^+$ has the same deterministic solution $\psi_+$ as $a$ does), and in Step 1 that}
\bigl\|X_\ve\big(\cdot\wedge \tau^R_0\big)-\psi_+\bigr\|_{C([0,T])} \overset{P}\to 0 &\qquad& \text{as }\ve\to0\;\forall\ T>0.
\end{alignat*}
Theorem \ref{thm:comparisonThm} yields $X_\ve^+(t)\geq  X_\ve(t)$. Therefore, any (subsequential) limit of $\{X_\ve^+\}_\ve$ is greater than or equal to a limit of $\{ X_\ve\}_\ve$, and if $\bar X_0$ is a limit point of $\{X_\ve\}_\ve$ then
\[
\Pr\Bigl(\bar X_0(t) = \psi_+(t) \ \forall\ t\leq\tau^R_0 \text{ and } \bar{X}_0(t) \leq R \ \forall\ t>\tau^R_0\Bigr) =1.
\]
On the other hand, it can be seen that any limit point $\bar X_0$ of $\{X_\ve\}_\ve$
satisfies
\[
\Pr\Bigl(\exists\ t\geq \tau^0_R : \bar X_0(t)<R\Bigr)=0.
\]
Thus we have equality, $\bar X_0(t)=\psi_+(t)$ for all $t\geq 0 $  almost surely. This concludes the proof for the case $a(x)>0$ for $x\in(-\delta_0,0)$. The case $a(x)\geq 0$ for $x\in(-\delta_0,0)$ will be considered in \S\ref{section:finalOfTheorem1.1}.
\end{proof}

\section{Velocity with a change in sign}\label{sec:repulsive}
In this section we consider the repulsive case and prove Theorem \ref{thm:ZeroNoiseRepulsive}. We also provide several tools for computing the zero noise probability distribution.

\subsection{Convergence in the repulsive case}

\begin{lemma}\label{lem:osgoodrepulsive}
Let $\alpha<0<\beta$, assume that $a\in L^\infty(\mbR)$ satisfies the ``repulsive Osgood condition'' \eqref{eq:osgoodrepulsive}, and define $p_\ve$ by
\begin{equation}\label{eq:weightdef}
p_\ve \coloneqq \frac{- s_\ve(\alpha)}{s_\ve(\beta)- s_\ve(\alpha)}, \qquad
s_\ve(r) \coloneqq \int_0^r e^{-B(z)/\ve^2} \,dz, \qquad B(z)\coloneqq 2\int_0^z a(u)\,du.
\end{equation}
Then
\[
\limsup_{\ve\to0}\Exp^0\big(\tau_{\ve}^\alpha\wedge \tau_{\ve}^\beta\big) \leq \int_\alpha^\beta \frac{1}{|a(x)|}\,dx < \infty.
\]
If $p_{\ve_k}\to p$ as $k\to\infty$, then
\[
\Exp^0\big(\tau_{\ve_k}^\alpha\wedge \tau_{\ve_k}^\beta\big) \to {(1-p)}\int_\alpha^0 \frac{-1}{a(z)}\,dz + {p}\int_0^\beta \frac{1}{a(z)}\,dz \qquad \text{as }k\to\infty.
\]
\end{lemma}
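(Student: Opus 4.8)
The plan is to compute $\Exp^0\big(\tau_\ve^\alpha\wedge\tau_\ve^\beta\big)$ exactly via the Green's-function representation in Theorem~\ref{thm:exit_time}\ref{thm:exit_time3}, and then to pass to the limit termwise using Lemma~\ref{lem:approxidentity}. First I would apply \eqref{eq:194} with $x_1=\alpha$, $x_2=\beta$, $x=0$ and $f\equiv1$, obtaining
\[
\Exp^0\big(\tau_\ve^\alpha\wedge\tau_\ve^\beta\big)=\int_\alpha^\beta G_\ve(0,y)\,m_\ve(dy),
\]
where $s=s_\ve$ is the scale function from \eqref{eq:weightdef} and, since $\sigma=\ve$ and $s_\ve'(y)=e^{-B(y)/\ve^2}$, the speed measure is $m_\ve(dy)=\tfrac{2}{\ve^2}e^{B(y)/\ve^2}\,dy$ by \eqref{eq:463}. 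Because $s_\ve(0)=0$, the symmetric Green function simplifies, on the two subintervals, to
\[
G_\ve(0,y)=(1-p_\ve)\bigl(s_\ve(y)-s_\ve(\alpha)\bigr),\ y\in(\alpha,0), \qquad G_\ve(0,y)=p_\ve\bigl(s_\ve(\beta)-s_\ve(y)\bigr),\ y\in(0,\beta),
\]
where I have used $G_\ve(0,y)=G_\ve(y,0)$ on $(\alpha,0)$ and the identity $1-p_\ve=s_\ve(\beta)/(s_\ve(\beta)-s_\ve(\alpha))$. This exhibits the expectation as a convex combination $\Exp^0\big(\tau_\ve^\alpha\wedge\tau_\ve^\beta\big)=(1-p_\ve)I_\ve^-+p_\ve I_\ve^+$, with $I_\ve^\pm\geq0$ the two remaining integrals.

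Next I would recognise each $I_\ve^\pm$ as an instance of Lemma~\ref{lem:approxidentity}. Writing $s_\ve(\beta)-s_\ve(y)=\int_y^\beta e^{-B(z)/\ve^2}\,dz$ and applying Tonelli, the $(0,\beta)$ term becomes
\[
I_\ve^+=\int_0^\beta\left(\int_y^\beta\exp\left(-\int_y^z\frac{2a(u)}{\ve^2}\,du\right)\frac{2}{\ve^2}\,dz\right)dy,
\]
and similarly $I_\ve^-=\int_\alpha^0\bigl(\int_\alpha^y\exp(-(B(z)-B(y))/\ve^2)\tfrac{2}{\ve^2}\,dz\bigr)dy$. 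The crucial observation is that the repulsive Osgood condition \eqref{eq:osgoodrepulsive} forces $a>0$ a.e.~on $(0,\beta)$ and $a<0$ a.e.~on $(\alpha,0)$, since otherwise the integrals in \eqref{eq:osgoodrepulsive} would diverge. Hence on $(0,\beta)$ I take $f=2a>0$ and $g=1/a$, so that the inner integral is precisely the $g_\ve$ of the first form of Lemma~\ref{lem:approxidentity}; on $(\alpha,0)$, where $B(z)-B(y)=\int_z^y(-2a(u))\,du$, I take $f=-2a=2|a|>0$ and $g=-1/a=1/|a|$, matching the second form. Both choices of $g$ lie in $L^1$ exactly by \eqref{eq:osgoodrepulsive}, so the lemma yields $L^1$-convergence of the inner integrals, whence $I_\ve^+\to\int_0^\beta\frac1a\,dz$ and $I_\ve^-\to\int_\alpha^0\frac{-1}{a}\,dz$ as $\ve\to0$.

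Finally I would assemble the two assertions. For the bound, $p_\ve\in[0,1]$ and $I_\ve^\pm\geq0$ give $\Exp^0\big(\tau_\ve^\alpha\wedge\tau_\ve^\beta\big)\leq I_\ve^-+I_\ve^+\to\int_\alpha^\beta\frac{1}{|a|}\,dx<\infty$, with no convergence of $p_\ve$ required. For the limit, if $p_{\ve_k}\to p$ then passing to the limit in the convex combination yields $(1-p)\int_\alpha^0\frac{-1}{a}\,dz+p\int_0^\beta\frac1a\,dz$, as claimed. I expect the main obstacle to be the bookkeeping that matches each $I_\ve^\pm$ to the correct form of Lemma~\ref{lem:approxidentity}, and in particular arranging the sign of the exponent so that $f>0$ — which is exactly the point at which the Osgood condition \eqref{eq:osgoodrepulsive} is used to fix the sign of $a$ on each side of the origin.
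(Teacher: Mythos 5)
Your proposal is correct and follows essentially the same route as the paper's proof: the Green's-function formula \eqref{eq:194} with $f\equiv1$, the simplification via $s_\ve(0)=0$ into the convex combination $(1-p_\ve)I_\ve^-+p_\ve I_\ve^+$, and termwise passage to the limit via Lemma~\ref{lem:approxidentity} with $f=2\sign(z)a(z)$ and $g=1/|a|$. Your explicit observation that \eqref{eq:osgoodrepulsive} forces the sign of $a$ on each side of the origin, and your clean separation of the uniform bound (needing only $p_\ve\in[0,1]$) from the convergence statement, match the paper's argument in substance.
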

\begin{proof}
{By \eqref{eq:Lharmonic}, \eqref{eq:463}, and \eqref{eq:194}
 with $f=1$} we can write
{\begin{align*}
&\Exp^0\big(\tau_{\ve}^\alpha\wedge \tau_{\ve}^\beta\big)
= \int_\alpha^0 \frac{(s_\ve(y)-s_\ve(\alpha))(s_\ve(\beta)-s_\ve(0))}{s_\ve(\beta)-s_\ve(\alpha)}\frac{2e^{B(y)/\ve^2}}{\ve^2}\,dy \\
&\qquad +\int_0^\beta \frac{(s_\ve(0)-s_\ve(\alpha))(s_\ve(\beta)-s_\ve(y))}{s_\ve(\beta)-s_\ve(\alpha)}\frac{2e^{B(y)/\ve^2}}{\ve^2}\,dy \\
&\quad= {(1-p_\ve)} \int_\alpha^0 (s_\ve(y)-s_\ve(\alpha))\frac{2e^{B(y)/\ve^2}}{\ve^2}\,dy  + {p_\ve}\int_0^\beta (s_\ve(\beta)-s_\ve(y))\frac{2e^{B(y)/\ve^2}}{\ve^2}\,dy \\
&\quad= {(1-p_\ve) \int_\alpha^0\int_\alpha^y \frac{2e^{(B(y)-B(z))/\ve^2}}{\ve^2}\,dz\,dy+
p_\ve \int_0^\beta\int_y^\beta\frac{2e^{(B(y)-B(z))/\ve^2}}{\ve^2}\,dz\,dy}\\
&\quad= (1-p_\ve) \int_\alpha^0\int_\alpha^y \frac{2\exp\Bigl({\textstyle -\int_z^y \frac{2a(u)}{\ve^2} du}\Bigr)}{\ve^2}\,dz\,dy \\
&\qquad +p_\ve \int_0^\beta\int_y^\beta\frac{2\exp\Bigl({\textstyle -\int_z^y \frac{2a(u)}{\ve^2} du}\Bigr)}{\ve^2}\,dz\,dy\\
&\quad= (1-p_\ve) \int_\alpha^0\int_\alpha^y \exp\Bigl({\textstyle-\int_z^y \frac{2a(u)}{\ve^2} du}\Bigr) \frac{2 a(z)}{\ve^2}\frac{1}{a(z)}\,dz\,dy \\
&\qquad +p_\ve \int_0^\beta\int_y^\beta \exp\Bigl({\textstyle-\int_z^y \frac{2a(u)}{\ve^2} du}\Bigr) \frac{2 a(z)}{\ve^2}\frac{1}{a(z)}\,dz\,dy.
\end{align*}}
Setting $f(z)=2\sign(z)a(z)$ and $g(z)=\frac{1}{a(z)}$ in Lemma \ref{lem:approxidentity}, we find that the above two integrals with $\ve=\ve_k$ converge to
\[
\int_\alpha^0 \frac{-1}{a(z)}\,dz \qquad\text{and}\qquad \int_0^\beta\frac{1}{a(z)}\,dz
\]
respectively, as $k\to\infty$. This concludes the proof.
\end{proof}

We can now prove the main theorem in the repulsive case.
\begin{proof}[Proof of Theorem \ref{thm:ZeroNoiseRepulsive}]
Let $X_{\ve_k'}$ be any weakly convergent subsequence of $\{X_{\ve_k}\}_k$, and let $\tau_{\ve_k'}$ and $\tau$ be the hitting times of $X_{\ve_k'}$ and its limit, respectively.
By Lemma \ref{lem:osgoodrepulsive} we have for any $\alpha<0<\beta$
\[
\Exp^0(\tau^\alpha\wedge\tau^\beta)\leq \liminf_{k\to\infty}\Exp^0\bigl(\tau^\alpha_{\ve_k}\wedge\tau^\beta_{\ve_k}\bigr) = {(1-p)A(\alpha)+ pA(\beta)}.
\]
Consequently, $\Pr^0\bigl(\tau^\alpha\wedge\tau^\beta=\infty\bigr)=0$, so $\Pr^0(\tau^\alpha<\tau^\beta)=\lim_{k\to\infty}\Pr^0(\tau^\alpha_{\ve_k'}<\tau^\beta_{\ve_k'})={1-p}$ and $\Pr^0(\tau^\alpha>\tau^\beta)={p}$.
Using Theorem \ref{thm:ZeroNoisePositiveDrift111} and the strong Markov property, the probability of convergence once the process escapes $(\alpha,\beta)$ at $x=\beta$ is one:
\[
\lim_{k\to\infty}\Pr^0\Bigl(\bigl\|X_{\ve_k'}(\cdot-\tau_\beta)-\psi_+(\cdot-A(\beta))\bigr\|_{C([0,T])}>\delta \bigm| \tau^\alpha>\tau^\beta \Bigr) = 1,
\]
for any sufficiently small $\delta>0$, and likewise for those paths escaping at $x=\alpha$. Passing $\alpha,\beta\to0$ yields
\begin{align*}
&\lim_{\delta\to0}\lim_{k\to\infty}\Pr^0\Bigl(\|X_{\ve_k'}-\psi_-\|_{C([0,T])}>\delta\Bigr) = {1-p}, \\ &\lim_{\delta\to0}\lim_{k\to\infty}\Pr^0\Bigl(\|X_{\ve_k'}-\psi_+\|_{C([0,T])}>\delta\Bigr) = {p}.
\end{align*}
Since this is true for any weakly convergent subsequence $\ve_k'$, and the limit is unique, the entire sequence $\ve_k$ must converge.
\end{proof}

\subsection{Probabilities in the repulsive case}
{Theorem \ref{thm:ZeroNoiseRepulsive} gives a concrete condition for convergence of the sequence
 of perturbed solutions, as well as a characterization of the limit distribution. In this section we give
 an explicit expression for the probabilities in the limit distribution, and an equivalent condition for
convergence.}

Consider the integral
\[
B(x)\coloneqq \int_0^x a(y)\,dy
\]
%
and denote $B_\pm = B\bigr|_{\mbR_\pm}$.
{Select any $\alpha>0, \beta>0$ such that the function $\mu\from[0,\beta)\to(\alpha,0]$
 defined by $\mu=B_-^{-1}\circ B_+$ is well-defined --- that is,
 \[
B_+(x) = B_-(\mu(x)), \quad \forall\ x\in [0,\beta).
\]
Clearly, $B_\pm$ are Lipschitz continuous. Since $a$ is strictly positive (negative) for $x>0$ ($x<0$), the inverses of $B_\pm$ are absolutely continuous (see e.g.~\cite[Exercise 5.8.52]{Bogachev2007}), so $\mu$ is also absolutely continuous.
We now rewrite the probability of choosing the left/right extremal  solutions $X^\pm$ in terms of $\mu$.}

\begin{theorem}\label{thm:limitprobs}
Let $a\in L^\infty(\mbR)$ satisfy \eqref{eq:osgoodrepulsive}
and let $\mu\from[0,\beta)\to(\alpha,0]$ be as above.
 Then $\{p_\ve\}_\ve$ converges if either the derivative $\mu'(0)$ exists, or if $\mu'(0)=-\infty$. In either case, we have
\begin{subequations}\label{eq:limit_prob}
\begin{equation}\label{eq:limit_prob1}
\lim_{\ve\to0}p_\ve = {\frac{-\mu'(0)}{1-\mu'(0)}}.
\end{equation}
Moreover, the derivative $\mu'(0)$ exists if and only if the limit $\lim_{u\downarrow0}\frac{B_-^{-1}(u)}{B_+^{-1}(u)}$ exists, and we have the equality:
\begin{equation}
\label{eq:limit_prob2}
\mu'(0)=\lim_{u\downarrow0}\frac{B_-^{-1}(u)}{B_+^{-1}(u)}.
\end{equation}
\end{subequations}
\end{theorem}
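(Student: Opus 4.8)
The plan is to reduce everything to a ratio of Laplace-type integrals and to extract the hypothesis on $\mu'(0)$ through an integration by parts. Write $I_\ve^-\coloneqq -s_\ve(\alpha)=\int_\alpha^0 e^{-2B(z)/\ve^2}\,dz$ and $I_\ve^+\coloneqq s_\ve(\beta)=\int_0^\beta e^{-2B(z)/\ve^2}\,dz$ (both positive, with $B(z)=\int_0^z a$ as in the statement and the factor $2$ coming from \eqref{eq:weightdef}). Then $\frac{p_\ve}{1-p_\ve}=\frac{I_\ve^-}{I_\ve^+}$, so the target \eqref{eq:limit_prob1} is equivalent to $\frac{I_\ve^-}{I_\ve^+}\to-\mu'(0)$. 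I would first dispose of \eqref{eq:limit_prob2}: the map $u=B_+(x)$ is a homeomorphism of a right-neighbourhood of $0$ onto a right-neighbourhood of $0$ with $x\downarrow0\iff u\downarrow0$, and under it $\frac{\mu(x)}{x}=\frac{B_-^{-1}(u)}{B_+^{-1}(u)}$; since $\mu(0)=0$, the one-sided derivative $\mu'(0)=\lim_{x\downarrow0}\mu(x)/x$ exists if and only if $\lim_{u\downarrow0}B_-^{-1}(u)/B_+^{-1}(u)$ does, with equal values.

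Next I would rewrite $I_\ve^\pm$ on the level sets of $B$. The repulsive Osgood condition \eqref{eq:osgoodrepulsive} forces $a<0$ a.e.\ on $(\alpha,0)$ and $a>0$ a.e.\ on $(0,\beta)$, so $B_\pm$ are strictly monotone with absolutely continuous inverses (as noted in the setup). Substituting $u=B(z)$ on each side turns $I_\ve^\pm$ into Laplace integrals $\int_0^{b_\pm}e^{-2u/\ve^2}\rho_\pm(u)\,du$ with densities $\rho_+=(B_+^{-1})'$ and $\rho_-=-(B_-^{-1})'$, where the weight $e^{-2u/\ve^2}$ concentrates at $u=0$ as $\ve\to0$. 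Here is the crucial point: I cannot apply Lemma~\ref{lem:approxidentity} to each integral separately, because the densities are essentially $1/\abs{a}$ and need not have a limit as $u\downarrow0$; only their ratio does, that being exactly the content of $\mu'(0)$ existing. To circumvent this I integrate by parts against the elementary weight $e^{-2u/\ve^2}$; the boundary terms are $O(e^{-2b/\ve^2})$ and hence negligible, and the prefactor $2/\ve^2$ cancels in the quotient, leaving
\[
\frac{I_\ve^-}{I_\ve^+}=\frac{\int_0^{b_-}e^{-2u/\ve^2}\bigl(-B_-^{-1}(u)\bigr)\,du}{\int_0^{b_+}e^{-2u/\ve^2}\,B_+^{-1}(u)\,du}\,\bigl(1+o(1)\bigr),
\]
which now involves the primitives $B_\pm^{-1}$ rather than their derivatives.

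Finally I would prove an elementary ratio lemma: if $F,G>0$ satisfy $F(u)/G(u)\to L\in(0,\infty]$ as $u\downarrow0$, then $\frac{\int_0^b e^{-2u/\ve^2}F\,du}{\int_0^b e^{-2u/\ve^2}G\,du}\to L$. For finite $L$ this follows by sandwiching $(L-\eta)G\le F\le(L+\eta)G$ on a small interval $(0,\delta_1]$ and observing that the contribution of $[\delta_1,b]$ is exponentially small compared with the main part (a crude lower bound $\int_0^{\delta_1}e^{-2u/\ve^2}G\gtrsim G(\delta_1/2)\,\ve^2 e^{-\delta_1/\ve^2}$ suffices); letting $\ve\to0$ and then $\eta\to0$ gives the claim, and the case $L=\infty$ is handled by the same one-sided sandwiching. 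Applying this with $F=-B_-^{-1}$, $G=B_+^{-1}$ and $L=-\mu'(0)$ (via \eqref{eq:limit_prob2}) yields $\frac{I_\ve^-}{I_\ve^+}\to-\mu'(0)$, hence $p_\ve\to\frac{-\mu'(0)}{1-\mu'(0)}$, which is \eqref{eq:limit_prob1}; the degenerate value $\mu'(0)=-\infty$ gives the limit $1$, consistent with the formula.

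I expect the main obstacle to be precisely the lack of regularity of $a$ at the origin: the densities $\rho_\pm\sim1/\abs{a}$ may oscillate, so evaluating each Laplace integral through Lemma~\ref{lem:approxidentity} fails. The integration-by-parts reduction to $B_\pm^{-1}$, combined with the ratio lemma, is what converts the weak hypothesis (existence of $\mu'(0)$, equivalently of $\lim_{u\downarrow0}B_-^{-1}(u)/B_+^{-1}(u)$) into convergence of $p_\ve$. The remaining points — the factor $2$, matching the two integration ranges near the common top level, and the negligibility of tails — are routine.
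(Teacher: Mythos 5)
Your argument is correct, but it is not the route the paper takes, and the difference is worth recording. The paper first uses Lemma \ref{lem:limits} to normalize $\mu(\beta)=\alpha$, then substitutes $x=\mu^{-1}(\cdot)$ in $s_\ve(\alpha)$ so that $s_\ve(\alpha)/s_\ve(\beta)$ becomes the average $\frac{1}{\bar\nu_\ve}\int_0^\beta\nu_\ve(x)\mu'(x)\,dx$ of the \emph{density} $\mu'$ against the kernel $\nu_\ve=e^{-B/\ve^2}$, and then invokes the approximate-identity Lemma \ref{lem:approxidentity2} with $f=\mu'$; that lemma requires $0$ to be a Lebesgue point of $\mu'$. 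You instead substitute $u=B(z)$ separately on the two sides, integrate by parts against $e^{-2u/\ve^2}$ to trade the densities $\pm(B_\pm^{-1})'$ for the primitives $B_\pm^{-1}$ (the boundary terms being exponentially small against main terms bounded below by a multiple of $\ve^2$, since $B_+^{-1}(u)\geq u/(2\|a\|_{L^\infty})$), and close with an elementary two-sided sandwich for ratios of Laplace integrals. What your route buys is that it consumes exactly the stated hypothesis: existence of $\lim_{u\downarrow0}B_-^{-1}(u)/B_+^{-1}(u)$, i.e.\ of the difference-quotient limit $\mu'(0)$. The paper's route needs the a priori stronger Lebesgue-point property, and its passage from ``$\mu'(0)$ exists'' to ``$0$ is a Lebesgue point of $\mu'$'' (convergence of the averages $\tfrac1h\int_0^h\mu'$ versus convergence of $\tfrac1h\int_0^h|\mu'-\mu'(0)|$) is the most delicate step of that proof; your integration by parts sidesteps it entirely, at the cost of proving the small ratio lemma from scratch rather than reusing Lemma \ref{lem:approxidentity2}. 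The pieces you defer --- the mismatch of the ranges $b_\pm$ (which the paper handles via Lemma \ref{lem:limits}), the validity of the change of variables and of integration by parts for the absolutely continuous $B_\pm^{-1}$, and the identification \eqref{eq:limit_prob2} via $u=B_+(x)$ (which coincides with the paper's own computation) --- are indeed routine.
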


To prove the theorem we will need the following lemmas:

\begin{lemma}\label{lem:limits}
Let $\alpha<0<\beta$. Define $p_\ve$ as in \eqref{eq:weightdef} and $p_\ve'$ similarly, where
$\alpha,\beta$ are exchanged with any
 $\alpha'<0<\beta'.$
Then $\lim_{\ve\to0}p_\ve'/p_\ve = 1$. In particular,
$p_{\ve_k}$ converges to some $p$ as $k\to\infty$ if and only if $p_{\ve_k}'$ converges to $p$.
\end{lemma}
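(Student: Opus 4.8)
The plan is to rewrite both weights purely in terms of the two one-sided scale integrals and to reduce everything to a Laplace-type concentration statement at the origin. Since $\alpha<0$, one has $-s_\ve(\alpha)=\int_\alpha^0 e^{-B(z)/\ve^2}\,dz=:L_\ve(\alpha)>0$ and $s_\ve(\beta)=\int_0^\beta e^{-B(z)/\ve^2}\,dz=:R_\ve(\beta)>0$, so that \eqref{eq:weightdef} reads $p_\ve=L_\ve(\alpha)/(L_\ve(\alpha)+R_\ve(\beta))\in(0,1)$, and likewise $p_\ve'=L_\ve(\alpha')/(L_\ve(\alpha')+R_\ve(\beta'))$. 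All four quantities are strictly positive, so the ratio $p_\ve'/p_\ve$ is well defined.

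The core of the argument is the claim that the one-sided integrals depend on the endpoint only up to a factor tending to $1$: namely $\lambda_\ve:=L_\ve(\alpha')/L_\ve(\alpha)\to1$ and $\rho_\ve:=R_\ve(\beta')/R_\ve(\beta)\to1$ as $\ve\to0$. Granting this, a short computation finishes the lemma: writing $L=L_\ve(\alpha)$, $R=R_\ve(\beta)$, one has
\[
\frac{p_\ve'}{p_\ve}-1=\frac{\lambda_\ve(L+R)}{\lambda_\ve L+\rho_\ve R}-1=\frac{(\lambda_\ve-\rho_\ve)R}{\lambda_\ve L+\rho_\ve R},
\]
and since $|\lambda_\ve-\rho_\ve|\to0$ while $R/(\lambda_\ve L+\rho_\ve R)\le 1/\rho_\ve$ stays bounded, the right-hand side tends to $0$. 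For the final assertion, $p_\ve,p_\ve'\in(0,1)$ together with $p_\ve'/p_\ve\to1$ give $|p_{\ve_k}'-p_{\ve_k}|=p_{\ve_k}\bigl|p_{\ve_k}'/p_{\ve_k}-1\bigr|\to0$, so $p_{\ve_k}$ and $p_{\ve_k}'$ share the same limit whenever either converges.

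It remains to prove the one-sided equivalence, which is where the repulsive structure enters and which I expect to be the main obstacle. The condition \eqref{eq:osgoodrepulsive} forces $a<0$ a.e.\ on the negative part and $a>0$ a.e.\ on the positive part of the relevant neighbourhood, so that $B$ is continuous with $B(0)=0$ and $B>0$ on $[\alpha,0)\cup(0,\beta]$ (and on the intervals determined by $\alpha',\beta'$). Taking WLOG $\alpha<\alpha'<0$, I would write $L_\ve(\alpha)=L_\ve(\alpha')+\int_\alpha^{\alpha'}e^{-B/\ve^2}\,dz$ and show the extra piece is negligible relative to $L_\ve(\alpha')$. Setting $m:=\min_{[\alpha,\alpha']}B>0$ (positive by continuity and compactness), the extra piece is at most $|\alpha-\alpha'|e^{-m/\ve^2}$; on the other hand, choosing $\gamma\in(0,m)$ and, by continuity of $B$ at $0$, a length-$\eta$ interval $(-\eta,0)\subset(\alpha',0)$ on which $B<\gamma$, one gets $L_\ve(\alpha')\ge \eta e^{-\gamma/\ve^2}$. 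Hence the ratio of the extra piece to $L_\ve(\alpha')$ is bounded by $\tfrac{|\alpha-\alpha'|}{\eta}e^{-(m-\gamma)/\ve^2}\to0$, giving $\lambda_\ve\to1$, and the symmetric estimate on the positive side gives $\rho_\ve\to1$. The delicate point is precisely this separation of exponential rates — that the contribution of the neighbourhood of the minimum of $B$ at the origin dominates any contribution bounded away from it — together with ensuring $B$ stays strictly positive away from $0$ on the intervals in play, which is exactly what the one-sided sign and integrability conditions in \eqref{eq:osgoodrepulsive} supply.
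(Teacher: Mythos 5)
Your proof is correct and follows essentially the same route as the paper: the paper's argument is precisely the observation that $\int_{r_1}^{r_2}e^{-B(z)/\ve^2}\,dz$ is negligible compared with $\int_0^{r_1}e^{-B(z)/\ve^2}\,dz$ as $\ve\to0$, which is exactly your claim $\lambda_\ve,\rho_\ve\to1$, and you supply the standard Laplace-type rate-separation estimate justifying it. The only caveat (shared with the paper's own one-line proof) is that the argument needs the sign condition on $a$, hence the positivity of $B$ away from the origin, to hold on the larger of the two pairs of intervals; your ``WLOG'' reduction to the inward case is exactly the situation in which the lemma is later applied.
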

The proof follows from the following observation: Since $B$ is strictly increasing,
 then for any positive $r_1< r_2$ or negative $r_1>r_2$,
\[
 \lim_{\ve\to0}\frac{\int_{r_1}^{r_2} e^{-B(z)/\ve^2} \,dz}{\int_0^{r_1} e^{-B(z)/\ve^2} \,dz}=0.
\]
{Next, we prove a technical lemma:}
\begin{lemma}\label{lem:approxidentity2}
Let $0<a \in L^\infty([0,\beta])$ and $f\in L^1(\mbR)$, and for $\ve>0$ and $x\in[0,\beta)$ define
\begin{gather*}
B(x) = 2\int_0^x a(y)\,dy, \qquad \nu_\ve(x) = e^{-B(x)/\ve^2}\ind_{[0,\beta]}(x), \\
\bar{\nu}_\ve = \int_0^\beta \nu_\ve(y)\,dy, \qquad
f_\ve(x) = \frac{1}{\bar{\nu}_\ve}\int_0^\beta f(x+y)\nu_\ve(y)\,dy.
\end{gather*}
Then $f_\ve(x) \to f(x)$ as $\ve\to0$ if and only if $x$ is a Lebesgue point of $f$.
\end{lemma}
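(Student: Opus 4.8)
My plan is to recast everything in terms of the one-sided interval averages
\[
\Theta(h) \coloneqq \frac1h\int_0^h f(x+y)\,dy, \qquad h\in(0,\beta],
\]
and to read the statement as an Abelian--Tauberian equivalence between the kernel mean $f_\ve(x)$ and the limit $\Theta(h)\to f(x)$. Two structural facts drive this. First, $\nu_\ve$ is nonincreasing on $[0,\beta]$ with $\nu_\ve(0)=1$ and $-\nu_\ve'(y)=\frac{2a(y)}{\ve^2}\nu_\ve(y)$, so $\int_0^\beta(-\nu_\ve'(y))\,dy=1-\nu_\ve(\beta)$. Second, since $B$ is strictly increasing (as $a>0$ a.e.), for any $0<\delta'<\delta$ one has the concentration estimate $\nu_\ve(\delta)/\bar\nu_\ve\le(\delta')^{-1}e^{-(B(\delta)-B(\delta'))/\ve^2}\to0$, obtained from $\bar\nu_\ve\ge\delta'e^{-B(\delta')/\ve^2}$. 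I will use $\tfrac1{\bar\nu_\ve}\int_0^\beta\nu_\ve=1$ to write $f_\ve(x)-f(x)=\tfrac1{\bar\nu_\ve}\int_0^\beta(f(x+y)-f(x))\nu_\ve(y)\,dy$ throughout.

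For the implication ``Lebesgue point $\Rightarrow$ $f_\ve(x)\to f(x)$'' I would integrate by parts against $\Psi(h)\coloneqq\int_0^h|f(x+y)-f(x)|\,dy$, using $\Psi(0)=0$, to get
\[
|f_\ve(x)-f(x)|\le\frac{\nu_\ve(\beta)\Psi(\beta)}{\bar\nu_\ve}+\frac1{\bar\nu_\ve}\int_0^\beta\Psi(y)\bigl(-\nu_\ve'(y)\bigr)\,dy.
\]
Given $\epsilon>0$, the Lebesgue-point hypothesis provides $\delta$ with $\Psi(y)\le\epsilon y$ on $[0,\delta]$; on that interval the identity $\int_0^\delta y(-\nu_\ve'(y))\,dy=-\delta\nu_\ve(\delta)+\int_0^\delta\nu_\ve\le\bar\nu_\ve$ bounds the contribution by $\epsilon$, while on $[\delta,\beta]$ (and in the boundary term) the concentration estimate forces the contribution to $0$. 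Hence $\limsup_{\ve\to0}|f_\ve(x)-f(x)|\le\epsilon$, and $\epsilon\downarrow0$ finishes this (routine) direction.

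For the converse I would first expose $f_\ve(x)$ as an average of the $\Theta(h)$. Writing the nonincreasing probability density $k_\ve\coloneqq\nu_\ve/\bar\nu_\ve$ as a mixture of uniform densities, $k_\ve(y)=\int_{(0,\beta]}\tfrac1h\ind_{[0,h]}(y)\,d\rho_\ve(h)$ with $d\rho_\ve(h)=h(-k_\ve'(h))\,dh+\beta k_\ve(\beta)\,\delta_{\{\beta\}}$ a probability measure, Fubini gives
\[
f_\ve(x)=\int_{(0,\beta]}\Theta(h)\,d\rho_\ve(h),
\]
and the concentration estimate shows that $\rho_\ve$ concentrates at the origin as $\ve\to0$. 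Thus $f_\ve(x)\to f(x)$ is precisely the Abel-summability of $\Theta$ to $f(x)$, whereas $x$ being a Lebesgue point gives genuine convergence $\Theta(h)\to f(x)$. Recovering the latter from the former is a Tauberian step; to justify it I would change variables $u=B(y)$, under which $k_\ve$ becomes the pure exponential (Laplace) kernel $\propto e^{-u/\ve^2}$ on $[0,B(\beta)]$ against the absolutely continuous weight $(B^{-1})'(u)\,du$, reducing the problem to the Hardy--Littlewood Tauberian theorem for the Laplace transform.

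The main obstacle is exactly this converse, and specifically checking that the hypothesis $f\in L^1$ supplies the one-sided Tauberian condition: integrability is what prevents $\Theta$ from oscillating so fast near $0$ that the means $\int\Theta\,d\rho_\ve$ could converge while $\Theta$ itself does not. I would also take care that the condition produced by the converse is the convergence of the (signed) interval averages $\Theta(h)\to f(x)$ at $x$, which is the notion genuinely matched to the one-sided kernel, and keep track of the one-sidedness throughout since only $y\in[0,\beta]$ enters.
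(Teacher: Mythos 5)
Your forward implication (Lebesgue point $\Rightarrow f_\ve(x)\to f(x)$) is correct and is essentially the paper's own argument: both integrate by parts against the primitive $\Psi(s)=\int_0^s|f(x+y)-f(x)|\,dy$, use $\sup_{y\leq\delta}\Psi(y)/y\to0$ on a small initial interval, and kill the remainder with the concentration estimate $\nu_\ve(\delta)/\bar\nu_\ve\to0$. (The paper chooses a moving cut-off $s_\ve$ with $B(s_\ve)=\ve$ rather than a fixed $\delta$ followed by $\epsilon\downarrow0$; this is cosmetic.)

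The converse is where your proposal has a genuine gap, on two counts. First, the Tauberian condition is asserted rather than supplied: $f\in L^1$ places no constraint on the oscillation of $\Theta(h)=\frac1h\int_0^h f(x+y)\,dy$ near $h=0$ --- for instance $\int_0^h f(x+y)\,dy=h\sin\log(1/h)$ is realized by a \emph{bounded} density --- so integrability cannot serve as a one-sided Tauberian hypothesis for the Laplace kernel, and the Hardy--Littlewood step does not go through. Second, and more fundamentally, even a successful Tauberian argument would only deliver $\Theta(h)\to f(x)$, i.e.\ convergence of the \emph{signed} averages; this is strictly weaker than the Lebesgue-point property, which requires $\frac1h\int_0^h|f(x+y)-f(x)|\,dy\to0$. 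You flag this mismatch yourself but do not close it, and it cannot be closed: take $x=0$, $f(0)=0$, $f(y)=\sin(1/y)$ on $(0,\beta]$. Then $\int_0^h f=O(h^2)$, so integrating by parts against $\nu_\ve$ gives $f_\ve(0)\to0=f(0)$, while $\frac1h\int_0^h|f|\to 2/\pi\neq0$, so $0$ is not a Lebesgue point; the ``only if'' half of the lemma is therefore false as literally stated. You are in good company: the paper's own proof establishes only the ``if'' direction, which is the only direction used later (in the proof of Theorem \ref{thm:limitprobs}), so the right move is to prove that direction --- as you do correctly --- and to drop or reinterpret the converse rather than attempt a Tauberian rescue.
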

\begin{proof}
{Let $x\in[0,\beta)$. For $s\in(0,\beta-x)$, let
\[
F(s) = \int_0^s |f(x+y)-f(x)|\,dy, \qquad C_s = \sup_{y\in(0,s)}\tfrac{F(y)}{y}.
\]
Then $C_s \to 0$ as $s\to0$ if and only if $x$ is a Lebesgue point.} We estimate
\begin{align*}
|f_\ve(x)-f(x)| &= \frac{1}{\bar\nu_\ve}\biggl|\int_0^\beta (f(x+y)-f(x))\nu_\ve(y)\,dy\biggr| \\
&\leq \underbrace{\frac{1}{\bar{\nu}_\ve}\int_0^s |f(x+y)-f(x)|\nu_\ve(y)\,dy}_{=\,I_1} + \underbrace{\frac{1}{\bar{\nu}_\ve}\int_s^\beta |f(x+y)-f(x)|\nu_\ve(y)\,dy}_{=\,I_2}.
\end{align*}
For the first term we integrate by parts several times to get
\begin{align*}
I_1 &= F(s)\frac{\nu_\ve(s)}{\bar\nu_\ve} - \frac{1}{\bar\nu_\ve}\int_0^s F(y)\nu_\ve'(y)\,dy
\leq F(s)\frac{\nu_\ve(s)}{\bar\nu_\ve} - \frac{C_s}{\bar\nu_\ve}\int_0^s y\nu_\ve'(y)\,dy \\
&= F(s)\frac{\nu_\ve(s)}{\bar\nu_\ve} - \frac{C_s}{\bar\nu_\ve} s\nu_\ve(s) + \frac{C_s}{\bar\nu_\ve}\int_0^s \nu_\ve(y)\,dy \\
&\leq F(s)\frac{\nu_\ve(s)}{\bar\nu_\ve} + \frac{C_s}{\bar\nu_\ve}\int_0^\beta \nu_\ve(y)\,dy\\
&= F(s)\frac{\nu_\ve(s)}{\bar\nu_\ve} + C_s.
\end{align*}
For the second term we estimate
\begin{align*}
I_2 &\leq 2\|f\|_{L^1}  \frac{\nu_\ve(s)}{\bar\nu_\ve}.
\end{align*}
If we can find $s=s_\ve$ such that both $s_\ve\to0$ and $\frac{\nu_\ve(s_\ve)}{\bar\nu_\ve} \to 0$
as $\ve\to0$, then both $I_1$ and $I_2$ vanish in the $\ve\to0$ limit,
and we can conclude the result. Below we explain the existence of such a choice.

Since $B$ is increasing and Lipschitz continuous, with $B(0)=0$ and $\|B\|_\Lip \leq 2\|a\|_{L^\infty}<\infty$, there is some $\kappa<s$ satisfying $B(\kappa)=\tfrac12 B(s)$, and $\kappa\geq \frac{1}{2\|B\|_\Lip}B(s)$. Moreover, since $\nu_\ve$ is decreasing we have
\[
\bar\nu_\ve = \int_0^\beta \nu_\ve(y)\,dy \geq \kappa\nu_\ve(\kappa) = \kappa e^{-B(\kappa)/\ve^2} = \kappa e^{-B(s)/(2\ve^2)},
\]
so
\[
\frac{\nu_\ve(s)}{\bar\nu_\ve} \leq \frac{1}{\kappa}e^{-B(s)/(2\ve^2)}
\leq 2\|B\|_\Lip \frac{e^{-B(s)/(2\ve^2)}}{B(s)}.
\]
Now choose $s=s_\ve$ such that $B(s_\ve) = \ve$. (Such a number exists for sufficiently small $\ve>0$.) Then $s_\ve\to0$ as $\ve\to0$, and
\[
\frac{\nu_\ve(s)}{\bar\nu_\ve} \leq 2\|B\|_\Lip \frac{e^{-1/(2\ve)}}{\ve} \to 0
\]
as $\ve\to0$. This finishes the proof.
\end{proof}

\begin{proof}[Proof of Theorem \ref{thm:limitprobs}]
We have
\[
p_\ve = \frac{{-s_\ve(\alpha)}}{s_\ve(\beta)-s_\ve(\alpha)} =
{\frac{-\frac{s_\ve(\alpha)}{s_\ve(\beta)}}{1-\frac{s_\ve(\alpha)}{s_\ve(\beta)}}}.
\]
By Lemma~\ref{lem:limits} we may assume $\mu(\beta)=\alpha$, so
\begin{align*}
s_\ve(\alpha) &= \int_0^\alpha e^{-B(\mu^{-1}(x))/\ve^2}\,dx = \int_0^{\beta}e^{-B(x)/\ve^2}\mu'(x)\,dx.
\end{align*}
Thus,
\[
\frac{s_\ve(\alpha)}{s_\ve(\beta)} = \frac{1}{\bar\nu_\ve}\int_0^\beta \nu_\ve(x)\mu'(x)\,dx
\]
where
\[
\nu_\ve(x) = e^{-B(x)/\ve^2}, \qquad \bar\nu_\ve = \int_0^\beta e^{-B(z)/\ve^2}\,dz.
\]
From Lemma \ref{lem:approxidentity2} with $f(x)\coloneqq \mu'(x)$ it now follows that $p_\ve$ converges if either $0$ is a Lebesgue point for $\mu'$, or $\lim_{x\to0}\mu'(x)={-\infty}$.
In the former case, we notice that $0$ is a Lebesgue point for $\mu'$ if
 the following limit exists:
\[
{\lim_{h\downarrow 0}}\frac{\int_0^h \mu'(z) \,dz}{h}=
{\lim_{h\downarrow 0}}\frac{ \mu(h) -\mu(0)}{h}.
\]
The right hand side of the last equation is the usual definition of the derivative.

To prove \eqref{eq:limit_prob2} notice that
\[
\lim_{h\downarrow 0}\frac{ \mu(h) -\mu(0)}{h}= \lim_{h\downarrow 0}\frac{ \mu(h)}{h}=
 \lim_{h\downarrow 0}\frac{B_-^{-1}\circ B_+(h)}{h}
=\lim_{u\downarrow 0}\frac{B_-^{-1}(u)}{B_+^{-1}(u)}.
\]
\end{proof}

\subsection{Repulsive, regularly varying drifts}
Although Theorem \ref{thm:ZeroNoiseRepulsive} provides an explicit expression \eqref{eq:limit_prob} of the limit probabilities, the limit \eqref{eq:limit_prob2} might be difficult to evaluate in practice. It is clearly easier to study existence of the limits
\begin{equation}\label{eq:equiv_a_B}
\lim_{x\downarrow0} \frac{a(-x)}{a(x)}
\end{equation}
or
\begin{equation}\label{eq:equiv_a_B1}
\lim_{x\downarrow0} \frac{B(-x)}{B(x)}
\end{equation}
than that for the inverse functions in \eqref{eq:limit_prob}. We will show that the limit  in \eqref{eq:limit_prob} can easily be calculated using \eqref{eq:equiv_a_B} or \eqref{eq:equiv_a_B1} if $a$ or $B$ are regularly varying at $0$.

Recall that a positive, measurable function $f\colon [0,\infty)\to(0,\infty)$ is \emph{regularly varying} of index $\gamma$ at $+\infty$ if $\lim_{x\to\infty}\frac{f(\lambda x)}{f(x)}=\lambda^\rho$ for all $\lambda>0$. It is regularly varying of index $\rho$ at $0$ if the function $x\mapsto f(1/x)$ is a regularly varying function of index $-\rho$ at $+\infty$. The set of regularly varying functions of index $\rho$
(at $+\infty$) is denoted by $R_\rho.$ It is well known that if $f\in R_\rho$, then
$f(x)=x^\rho \ell(x)$ for some slowly varying function $\ell$, i.e.~some $\ell\from[0,\infty)\to(0,\infty)$ for which $\lim_{x\to\infty}\frac{\ell(\lambda x)}{\ell(x)}=1$ for all $\lambda>0$.

We first consider the case when $B$ is regularly varying, and then the case when $a$ is. Note that the latter implies the former, but not {vice versa}.

\begin{proposition}\label{prop:B_regvar}
Assume that the functions $x\mapsto B_\pm(\pm x)$ are regularly varying of index
 $\rho>0$ at 0, and that the limit $c\coloneqq \lim_{x\downarrow0} B_-(-x)/B_+(x)$ exists
(or equals $\infty$). Then $\{p_\ve\}_{\ve>0}$ converges, and
\begin{equation}\label{key}
p \coloneqq \lim_{\ve\to0} p_{\ve} = \frac{{c^{-1/\rho}}}{1+c^{-1/\rho}}.
\end{equation}
If the functions $x\mapsto B_\pm(\pm x)$ are regularly varying of different indices $\rho_\pm,$
then
\[
p \coloneqq \lim_{\ve\to0} p_{\ve} =
\begin{cases}
1 & \rho_+<\rho_-\\
0 &  \rho_+>\rho_-.
\end{cases}
\]

\end{proposition}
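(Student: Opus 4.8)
The plan is to reduce everything to the single quantity $\mu'(0)$ from Theorem~\ref{thm:limitprobs} and then to evaluate it using the theory of regularly varying functions. By Theorem~\ref{thm:limitprobs}, $\{p_\ve\}_\ve$ converges whenever $\mu'(0)$ exists (finite or $-\infty$), with $p=\lim_\ve p_\ve=\frac{-\mu'(0)}{1-\mu'(0)}$ and $\mu'(0)=\lim_{u\downarrow0}B_-^{-1}(u)/B_+^{-1}(u)$; so it suffices to identify this last limit under the regular-variation hypotheses. First I would pass to the positive, increasing functions $g_\pm(x):=B_\pm(\pm x)$, which are continuous and strictly increasing because $a$ is strictly signed on either side of $0$, and record the elementary identities $B_+^{-1}(u)=g_+^{-1}(u)$ and $B_-^{-1}(u)=-g_-^{-1}(u)$, so that
\[
\mu'(0) = -\lim_{u\downarrow0}\frac{g_-^{-1}(u)}{g_+^{-1}(u)}, \qquad c = \lim_{x\downarrow0}\frac{g_-(x)}{g_+(x)}.
\]

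For the equal-index case $\rho_+=\rho_-=\rho$, the heart of the matter is to transfer the asymptotic ratio $g_-\sim c\,g_+$ from the functions to their inverses, obtaining $g_-^{-1}(u)\sim c^{-1/\rho}g_+^{-1}(u)$. I would prove this by a squeezing argument that avoids assuming the ratio converges a priori. Set $\phi(u):=c^{-1/\rho}g_+^{-1}(u)\to0$; regular variation of $g_+$ gives $g_+(\phi(u))\sim c^{-1}u$, and then $g_-\sim c\,g_+$ gives $g_-(\phi(u))\sim u$. Fixing $\delta>0$ and using that $g_-$ is regularly varying of index $\rho$ at $0$, one gets $g_-((1\pm\delta)\phi(u))\sim(1\pm\delta)^\rho u$, so for small $u$ the increasing function $g_-$ satisfies $g_-((1-\delta)\phi(u))<u<g_-((1+\delta)\phi(u))$; inverting yields $(1-\delta)<g_-^{-1}(u)/\phi(u)<(1+\delta)$. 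Letting $\delta\downarrow0$ gives $g_-^{-1}(u)\sim\phi(u)$, hence $\mu'(0)=-c^{-1/\rho}$ and $p=\frac{c^{-1/\rho}}{1+c^{-1/\rho}}$. The boundary cases $c=0$ and $c=\infty$ are handled by the same monotonicity-plus-scaling estimate, giving $\mu'(0)=-\infty$ (so $p=1$) and $\mu'(0)=0$ (so $p=0$) respectively, both consistent with the displayed formula.

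For the unequal-index case I would instead invoke the standard fact that the inverse of a function regularly varying of index $\rho>0$ at $0$ is regularly varying of index $1/\rho$ at $0$, so $g_\pm^{-1}(u)=u^{1/\rho_\pm}m_\pm(u)$ with $m_\pm$ slowly varying at $0$. Then
\[
\frac{g_-^{-1}(u)}{g_+^{-1}(u)} = u^{\,1/\rho_- - 1/\rho_+}\,\frac{m_-(u)}{m_+(u)},
\]
and since a slowly varying factor is $o(u^{-\delta})$ and $\omega(u^{\delta})$ for every $\delta>0$, it cannot overcome a genuine power of $u$: the sign of the exponent $1/\rho_--1/\rho_+$ decides the limit. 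If $\rho_+<\rho_-$ the exponent is negative, the ratio tends to $+\infty$, so $\mu'(0)=-\infty$ and $p=1$; if $\rho_+>\rho_-$ the exponent is positive, the ratio tends to $0$, so $\mu'(0)=0$ and $p=0$. In each case convergence of $\{p_\ve\}_\ve$ follows from Theorem~\ref{thm:limitprobs}.

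The main obstacle I expect is precisely the inversion step in the equal-index case, converting $g_-\sim c\,g_+$ into $g_-^{-1}\sim c^{-1/\rho}g_+^{-1}$, because a naive computation would already presuppose that the ratio $g_-^{-1}/g_+^{-1}$ converges. The squeezing argument above, which relies only on the monotonicity of $g_-$ and the defining scaling property of regular variation, is designed to break this circularity; once it is in place, the remaining work is routine bookkeeping with the representation $g(x)=x^\rho\ell(x)$ and the elementary algebra of $\frac{-\mu'(0)}{1-\mu'(0)}$.
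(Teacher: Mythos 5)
Your proposal is correct and follows essentially the same route as the paper: both reduce the statement to computing $\mu'(0)=\lim_{u\downarrow0}B_-^{-1}(u)/B_+^{-1}(u)$ via Theorem~\ref{thm:limitprobs} and then transfer the asymptotic ratio $B_-(-\cdot)\sim c\,B_+$ to the inverse functions using regular variation, picking up the factor $c^{-1/\rho}$ from the $1/\rho$-regular variation of $B_+^{-1}$. The only difference is that the paper cites the inversion equivalence as an exercise from Bingham--Goldie--Teugels, whereas you prove it directly by a monotonicity/squeezing argument (and you are more careful with the sign bookkeeping $B_-^{-1}(u)=-g_-^{-1}(u)$); this is a self-contained rendering of the same idea rather than a different proof.
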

\begin{proof}
It follows from \cite[Exercise~14, p.~190]{BTG} that if $f_1, f_2\colon(0,\infty)\to(0,\infty)$ are non-decreasing, regularly varying functions at $0$ of index $\rho>0$, then
\[
\lim_{x\to0}\frac{f_1(x)}{f_2(x)}=1 \qquad \text{if and only if} \qquad \lim_{x\to0}\frac{f_1^{-1}(x)}{f_2^{-1}(x)}=1,
\]
where $f_1^{-1}, f_2^{-1}$ are inverse functions. Write now $f_1(x)=B_-(-x)$, $f_2(x)=c B_+(x)$. Then
 \begin{equation}\label{eq:ratiolimit}
\lim_{x\to 0}\frac{f_1(x)}{f_2(x)}=1.
\end{equation}
The inverse function for $x\mapsto c B_+(x)$ is $x\mapsto B_+^{-1}(x/c),$ and $B_+^{-1}$ is
regularly varying of index $1/\rho$ (see \cite[Theorem 1.5.12]{BTG}), so
\[
B_+^{-1}(x/c)= (x/c)^{1/\rho}\ell_1(x/c)\sim (x/c)^{1/\rho}\ell_1(x)= c^{-1/\rho} B_+^{-1}(x)\qquad \text{as } x\to 0
\]
(where equivalence is meant in the sense of slowly
 varying functions). Hence, \eqref{eq:ratiolimit} yields
\begin{equation}\label{eq:ratiolimitinv}
\lim_{x\to0}\frac{B_-^{-1}(x)}{B_+^{-1}(x)}=c^{-1/\rho}.
\end{equation}
The same computation can be easily performed in reverse, so
\eqref{eq:ratiolimit} and \eqref{eq:ratiolimitinv} are equivalent, and
 the result now follows from Theorem \ref{thm:limitprobs} if $B_\pm$ are of the same index.

If $x\mapsto B_\pm(\pm x)$ are regularly varying of different indices $ {\rho_\pm},$ then
 the inverse functions are regularly varying functions of indices $\frac{1}{\rho_\pm},$ and the result is obvious.
\end{proof}

\begin{proposition}\label{prop:a_regvar}
Assume that both $x\mapsto a(\pm x)$ (for $x\geq0$) are regularly varying at $0$ with index $\rho>0$, and that the limit $c\coloneqq \lim_{x\downarrow0} \frac{-a(-x)}{a(x)}$ exists. Then $\{p_\ve\}_{\ve>0}$ converges, and
\[
p\coloneqq \lim_{\ve\to0}p_\ve = \frac{{c^{-1/(1+\rho)}}}{1+c^{-1/(1+\rho)}}.
\]
If the functions $x\mapsto a(\pm x)$ are regularly varying of different indices $\rho_\pm,$
then
\[
p \coloneqq \lim_{\ve\to0} p_{\ve} =
\begin{cases}
1 & \text{if } \rho_+<\rho_-\\
0 & \text{if }  \rho_+>\rho_-.
\end{cases}
\]
\end{proposition}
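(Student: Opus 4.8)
The plan is to reduce Proposition \ref{prop:a_regvar} to the already-established Proposition \ref{prop:B_regvar} by showing that regular variation of $a$ at $0$ transfers to regular variation of $B$, with the index shifted up by one, and that the two associated ratio-constants coincide. The only genuine analytic input is Karamata's theorem, which controls the integral of a regularly varying function near $0$. Throughout I work with the convention forced by the repulsive Osgood condition \eqref{eq:osgoodrepulsive}: $a>0$ a.e.\ on $(0,\beta)$ and $a<0$ a.e.\ on $(\alpha,0)$, so that both $B_+(x)=\int_0^x a$ and $B_-(-x)=\int_0^{-x}a=\int_0^x(-a(-v))\,dv$ are strictly positive and increasing for small $x>0$, and the regular-variation hypothesis is understood for the positive functions $a(x)$ and $-a(-x)$.

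First I would write $a(x)=x^\rho\ell_+(x)$ and $-a(-x)=x^\rho\ell_-(x)$ with $\ell_\pm$ slowly varying at $0$, and invoke Karamata's theorem at $0$. Since $\rho>0>-1$, the integrals converge and
\[
B_+(x)\sim \frac{x\,a(x)}{\rho+1}, \qquad B_-(-x)\sim \frac{x\,(-a(-x))}{\rho+1}\qquad\text{as }x\downarrow0.
\]
Both right-hand sides are of the form $x^{\rho+1}$ times a slowly varying factor, so $x\mapsto B_\pm(\pm x)$ are regularly varying at $0$ of index $\rho+1$. Thus the hypotheses of Proposition \ref{prop:B_regvar} are met, with $\rho+1$ playing the role of the index there, which in particular yields convergence of $\{p_\ve\}_\ve$.

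Next I would match the constants by dividing the two asymptotics:
\[
\frac{B_-(-x)}{B_+(x)}\sim \frac{-a(-x)}{a(x)}\longrightarrow c \qquad\text{as }x\downarrow0,
\]
so the constant appearing in Proposition \ref{prop:B_regvar} is exactly the same $c$ as in the present statement. Substituting index $\rho+1$ and constant $c$ into the conclusion of Proposition \ref{prop:B_regvar} gives $p=\dfrac{c^{-1/(\rho+1)}}{1+c^{-1/(\rho+1)}}$, which is the claimed formula. For the case of distinct indices $\rho_\pm$, the same Karamata estimate shows $x\mapsto B_\pm(\pm x)$ are regularly varying of indices $\rho_\pm+1$; since $\rho\mapsto\rho+1$ is strictly increasing, $\rho_+<\rho_-$ holds iff $\rho_++1<\rho_-+1$, and the second part of Proposition \ref{prop:B_regvar} delivers the dichotomy $p=1$ (resp.\ $p=0$) unchanged.

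The argument is essentially bookkeeping once Karamata's theorem is applied. The only point requiring care, and the place I expect a careless proof to slip, is the sign handling on the negative axis: one must consistently replace $a(-x)<0$ by its absolute value $-a(-x)$, both in the regular-variation hypothesis and in the integral defining $B_-$, and one must check that the integrability needed for Karamata at $0$ is guaranteed precisely by $\rho>0$. Beyond this I anticipate no real obstacle.
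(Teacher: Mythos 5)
Your proposal is correct and follows essentially the same route as the paper: both apply Karamata's theorem to get $B_\pm(\pm x)\sim \frac{x\,(\pm a(\pm x))}{1+\rho}$, conclude that $x\mapsto B_\pm(\pm x)$ are regularly varying of index $1+\rho$ with the same ratio-constant $c$, and then invoke Proposition \ref{prop:B_regvar} with $1+\rho$ in place of $\rho$ (including the dichotomy for distinct indices). Your explicit attention to the sign convention $-a(-x)$ on the negative axis is a welcome clarification of a point the paper passes over with ``and likewise for $x<0$.''
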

\begin{proof}
It follows from the Karamata theorem, see \cite[Theorem 1.6.1]{BTG}, that for $x>0$,
\begin{align*}
B(x)&= 2\int_0^x a(y) \, dy= 2\int_{1/x}^\infty  a(1/z)z^{-2} \, dz
= 2\int_{1/x}^\infty \ell(z)z^{-2-\rho} \, dz& \\
&\sim \frac{2 \ell(1/x) x^{1+\rho}}{1+\rho} \sim \frac{x a(x)}{1+\rho}
\end{align*}
as $x\to0$, and likewise for $x<0$. Thus, $x\mapsto B(\pm x)$ are regularly varying of index $1+\rho$. Letting
\[
c\coloneqq\lim_{x\downarrow0}\frac{-a(-x)}{a(x)},
\]
we can now apply Proposition \ref{prop:B_regvar} with $1+\rho$ in place of $\rho$ and get the desired result.

The case when $a(\pm x)$ are regularly varying with different indices can be considered similarly,
 cf. Proposition \ref{prop:B_regvar}.
\end{proof}

Finally, we provide a result which simplifies the computation of the limit distribution for severely oscillating drifts.
\begin{proposition}\label{prop:oscillatingdrift}
Let $a\from\mbR\to\mbR$ satisfy $xa(x)\geq0$ for all $x\in\mbR$, and assume that it is of the form
\[
a(x) = b(x) + |x|^\gamma g(\tfrac1x),
\]
where $\gamma>0$, $b$ is regularly varying at $0$ of order  $\rho<\gamma+1$, and $g\in L^\infty(\mbR)$ is such that its antiderivative $G(x)=\int_0^x g(y)\,dy$ also lies in $L^\infty(\mbR)$. Assume also that the limit $c\coloneqq \lim_{x\downarrow0} \frac{-b(-x)}{b(x)}$ exists. Then $\{p_\ve\}_{\ve>0}$ converges, and
\[
p\coloneqq \lim_{\ve\to0}p_\ve = \frac{{c^{1/(1+\rho)}}}{1+c^{1/(1+\rho)}}.
\]
\end{proposition}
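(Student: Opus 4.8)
The plan is to bypass the fact that $a$ is \emph{not} regularly varying (its oscillating part $|x|^\gamma g(1/x)$ need not be) and instead verify the hypotheses of Proposition \ref{prop:B_regvar} for the primitive $B(x)=\int_0^x a(y)\,dy$. Concretely, I will show that $x\mapsto B(\pm x)$ is regularly varying of index $1+\rho$ and that $\lim_{x\downarrow0}B(-x)/B(x)=c$, after which the proposition delivers both the convergence of $\{p_\ve\}_\ve$ and the claimed value of $p$. To this end I split $B=B_b+B_{\mathrm{osc}}$ with
\[
B_b(x)=\int_0^x b(y)\,dy, \qquad B_{\mathrm{osc}}(x)=\int_0^x |y|^\gamma g(\tfrac1y)\,dy .
\]

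For the regular part $B_b$ I argue exactly as in the proof of Proposition \ref{prop:a_regvar}: Karamata's theorem gives $B_b(x)\sim \frac{x\,b(x)}{1+\rho}$ as $x\to0$, and the analogous asymptotics for $x<0$ (using that $-b(-x)$ is regularly varying of index $\rho$), whence $x\mapsto B_b(\pm x)$ is regularly varying of index $1+\rho$ and
\[
\lim_{x\downarrow0}\frac{B_b(-x)}{B_b(x)}=\lim_{x\downarrow0}\frac{-b(-x)}{b(x)}=c .
\]
For the oscillating part, substituting $u=1/y$ turns it into $B_{\mathrm{osc}}(x)=\int_{1/x}^\infty u^{-\gamma-2}g(u)\,du$ for $x>0$; integrating by parts against the bounded antiderivative $G$ gives
\[
B_{\mathrm{osc}}(x)=-x^{\gamma+2}G(\tfrac1x)+(\gamma+2)\int_{1/x}^\infty u^{-\gamma-3}G(u)\,du,
\]
where the boundary term at $+\infty$ vanishes because $\gamma+2>0$ and $G\in L^\infty$. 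Since $G$ is bounded, both terms are $O(x^{\gamma+2})$, so $B_{\mathrm{osc}}(x)=O(|x|^{\gamma+2})$ as $x\to0$ (the case $x<0$ being identical).

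Combining the two estimates, the hypothesis $\rho<\gamma+1$ yields $\gamma+2>1+\rho$, and since $B_b(x)\asymp |x|^{1+\rho}\ell(|x|)$ for a slowly varying $\ell$ with $|x|^{\gamma+1-\rho}/\ell(|x|)\to0$, we get $B_{\mathrm{osc}}=o(B_b)$. Hence $B=B_b\,(1+o(1))$, so $x\mapsto B(\pm x)$ inherits regular variation of index $1+\rho$ from $B_b$, and $\lim_{x\downarrow0}B(-x)/B(x)=\lim_{x\downarrow0}B_b(-x)/B_b(x)=c$. Proposition \ref{prop:B_regvar}, applied with $1+\rho$ in place of $\rho$ and with this value of $c$, then finishes the proof.

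The heart of the matter is the control of $B_{\mathrm{osc}}$: pointwise the integrand $|y|^\gamma g(1/y)$ may be of the same size as (or larger than) $b(y)$, so no termwise comparison of $a$ with $b$ is possible, which is precisely why the statement must be phrased through $B$ rather than $a$. The gain is purely oscillatory --- writing $g=G'$ and integrating by parts exchanges the oscillation for an extra power of $y$, improving the effective exponent from $\gamma$ to $\gamma+1$ in the primitive. The only delicate points are the legitimacy of this integration by parts and the vanishing of the boundary term at infinity, both of which rest on the standing assumptions $g,G\in L^\infty$ and $\gamma>0$; everything else is the regularly varying bookkeeping already carried out in Propositions \ref{prop:B_regvar} and \ref{prop:a_regvar}.
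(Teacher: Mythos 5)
Your proof is correct and follows essentially the same route as the paper's: split off the primitive of the oscillating term, substitute $u=1/y$ and integrate by parts against the bounded antiderivative $G$ to obtain the $O(|x|^{\gamma+2})=o(B_b(x))$ bound, and then reduce to Proposition \ref{prop:B_regvar} (equivalently, to the argument of Proposition \ref{prop:a_regvar}); your treatment of the slowly varying factor when comparing $|x|^{\gamma+2}$ with $B_b(x)\asymp|x|^{1+\rho}\ell(|x|)$ is in fact slightly more careful than the paper's one-line conclusion. Note only that invoking Proposition \ref{prop:B_regvar} as you do yields $p=c^{-1/(1+\rho)}/(1+c^{-1/(1+\rho)})$, consistent with Proposition \ref{prop:a_regvar} but with the opposite exponent sign from the one printed in the statement of Proposition \ref{prop:oscillatingdrift}; this is an inconsistency in the paper's own statement (its proof leads to the same formula you obtain), not a gap in your argument.
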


\begin{proof}
We claim first that
\[
\int_0^x y^\gamma g(\tfrac1y)\,dy = O(x^{1+\gamma}) = o(x^\rho) \qquad \text{as } x\downarrow0.
\]
Indeed,
\begin{align*}
\biggl|\int_0^x y^\gamma g(\tfrac1y)\, dy\biggr| &= \biggl|\int_{1/x}^\infty z^{-2-\gamma}  g(z)\, dz\biggr| \\
&= \biggl|-x^{2+\gamma}G(\tfrac1x) - (\rho+2)\int_{1/x}^\infty z^{-3-\gamma}G(z)\,dz\biggr| \\
&\leq x^{2+\gamma}\|G\|_{L^\infty} + (\rho+2)\|G\|_{L^\infty}\int_{1/x}^\infty z^{-3-\gamma}\,dz \\
&= \Bigl(1 + \frac{\rho+2}{\gamma+2}\Bigr)x^{2+\gamma}\|G\|_{L^\infty}.
\end{align*}
It follows that the antiderivative $B(x)=\int_0^x a(y)\,dy$ equals a regularly
 varying function of order $1+\rho$, plus a term of order $o(x^{1+\rho})$. Following the
same procedure as in the proof of Proposition \ref{prop:a_regvar} yields the desired result.
\end{proof}

\section{Proof of Theorem \ref{thm:ZeroNoisePositiveDrift111}}\label{section:finalOfTheorem1.1}
We have already proven the Theorem in Section \ref{sec:positive_drift} if $a>0$ a.e.~in a small neighborhood of 0.
We will only prove the result for $a$ such that
$a\geq 0$ for negative $x$ and $\int_0^{R}\frac{dy}{a(y)\vee 0}<\infty$ for all $R>0$. The general
case, i.e., $a(x)\geq 0$ for a.e.~$x\in(-\delta_0,0)$ and $\int_0^{\delta_0}\frac{dy}{a(y)}<0$, is considered similarly to the reasoning in Section \ref{sec:positive_drift}.

{It follows from the comparison theorem that for any $x>0$ we have the inequality
$X_\ve(t)\leq X_\ve^x(t)$ for $t\geq 0$ with probability 1,
where $X_\ve^x$ is a solution of \eqref{eq:ode_pert} that started from $x$, $X_\ve^x(0)=x.$
Since $a$ is a.e.~positive on $(0,x)$, we have already seen that   $\{X_\ve^x(t)\}_{\ve}$ converges to
$\psi_+(\psi_+^{-1}(x)+t)$ as $t\to\infty.$ Thus, any limit point of $\{X_\ve(t)\}_\ve$ must be less than or equal to $\psi_+(\psi_+^{-1}(x)+t)$ for any $x>0$, almost surely.
Therefore, any limit point of $\{X_\ve(t)\}_\ve$ does not exceed $\psi_+(t).$}

Define the function
\[
a_n(x):=\begin{cases}
a(x) & \text{if } x\geq 0 \\
-\tfrac1n a(-\tfrac{x}{n}) & \text{if } x<0,
\end{cases}
\]
and denote the corresponding solutions to stochastic differential equations by $X_{n,\ve}(t).$
Let us apply Theorem \ref{thm:limitprobs} to the sequence $\{X_{n,\ve}\}_\ve.$
Calculate the limit \eqref{eq:limit_prob2}:
\[
B_{n,+}(x)=\int_0^x a(y) dy,\qquad B_{n,-}(x)=\int_0^x a(y/n) dy/n=B_+(x/n).
\]
Thus,
\[
(B_{n,-})^{-1}(u)=n (B_{n,+})^{-1}(u), \qquad \lim_{u\downarrow0}\frac{(B_{n,-})^{-1}(u)}{(B_{n,+})^{-1}(u)}=n,
\]
and we get convergence
\[
P_{X_{n,\ve}}\Rightarrow \frac{1}{n+1}\delta_{-n\psi_+(n^{-2}t)}+\frac{n}{n+1}\delta_{\psi_+(t)} \qquad \text{as }\ve\to0.
\]
By the comparison theorem we have the inequality $X_{n,\ve}(t)\leq X_\ve(t), t\geq 0$ with probability 1. Therefore, any limit point of $\{X_\ve\}_\ve$ equals $\psi_+$ with probability at least $\frac{n}{n+1}.$
We conclude that the limit of $\{X_\ve\}_\ve$ exists and equals $\psi_+$ almost surely.
The limit is non-random, so we have  convergence in probability, as in \eqref{eq:C2}. This finishes the proof of Theorem \ref{thm:ZeroNoisePositiveDrift111}.

\section{Examples}\label{sec:examples}

\begin{example}\label{ex:1}
For some fixed $\rho\in (0,1)$ we consider the function
\[
a(x)\coloneqq \sign(x)|x|^\rho \bigl(1+ \tfrac{1}{2}\phi\bigl(\tfrac{1}{x}\bigr)\bigr) \qquad \text{where } \phi(y)\coloneqq\sum_{n\in\mbZ}\ind_{[2n-1,2n)} - \ind_{[2n,2n+1)},
\]
defined for all $x\neq0$. Using Proposition \ref{prop:oscillatingdrift} with $b(x)=\sign(x)|x|^\rho$, $\gamma=\rho$ and $g(y)=\tfrac12\sign(y)\phi(y)$, we get $c=1$, and that $p_\ve \to \frac{1}{2}$. We also see that $a$ satisfies the repulsive condition \eqref{eq:osgoodrepulsive} of Theorem \ref{thm:ZeroNoiseRepulsive}, so we conclude that
\[
P_\ve \Rightarrow \tfrac12\delta_{\psi_-} + \tfrac12\delta_{\psi_+} \qquad\text{as } \ve\to0
\]
where $\psi_\pm$ are the maximal classical solutions.

Figure \ref{fig:Example51} shows an ensemble of approximate solutions for the above drift. We used noise sizes \(\varepsilon = \frac{3^{-i}}{e} ,\ i=-2, \dots, -9\), and computed 150 samples of the solution with the Euler--Maruyama scheme with a step size $\Delta t=2.5\times 10^{-3}$ up to time $t=0.5$. The left-hand figure shows all sample paths (vertical axis) as a function of time (horizontal axis), where bigger \(\varepsilon\) were given a lighter shades of grey. The sample paths with the smallest \( \varepsilon \) are depicted in red. The right-hand figure shows the cumulative distribution function of the samples at the final time \( t = 0.5 \) using the smallest value for \( \varepsilon \). We can clearly see that the solution is concentrated on the extreme sample paths $\psi_-,\psi_+$, each with probability $\tfrac12$.

\begin{figure}
	\includegraphics[width=0.49\linewidth]{./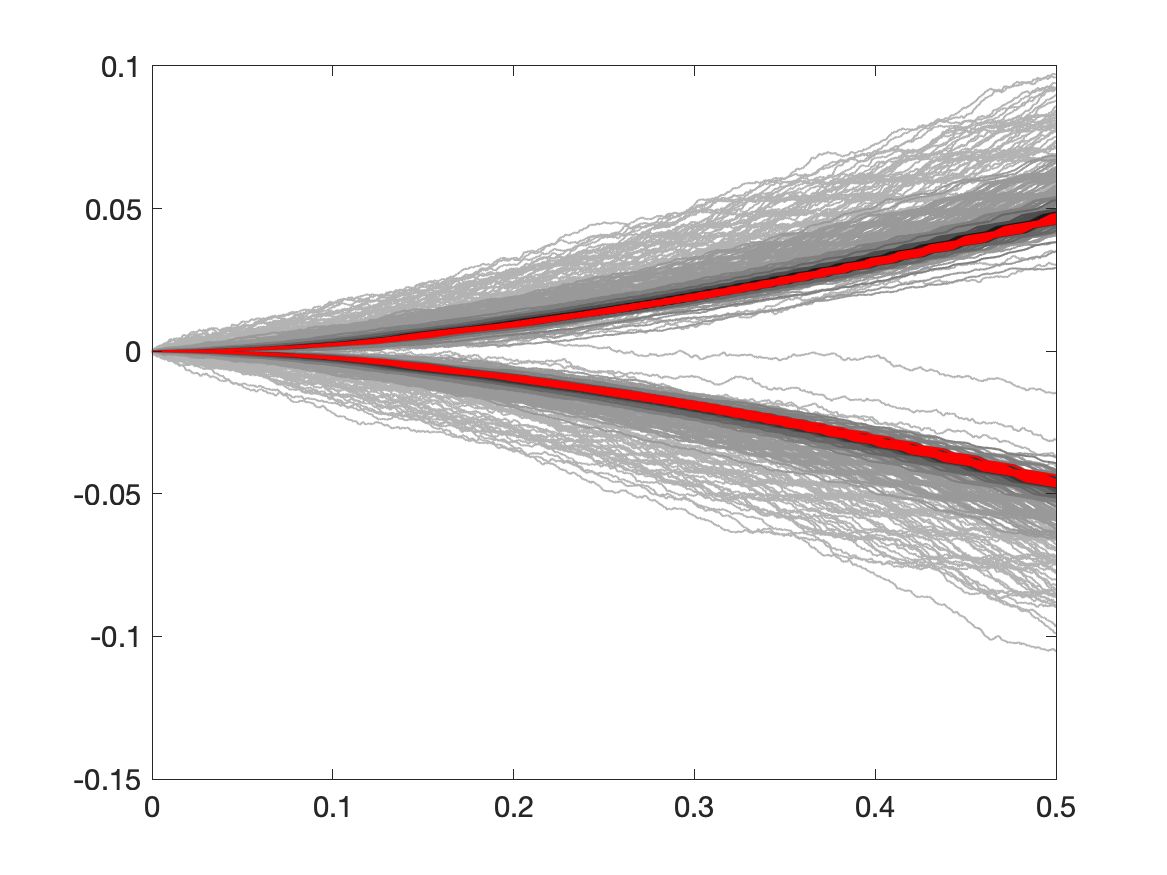}
	\includegraphics[width=0.49\linewidth]{./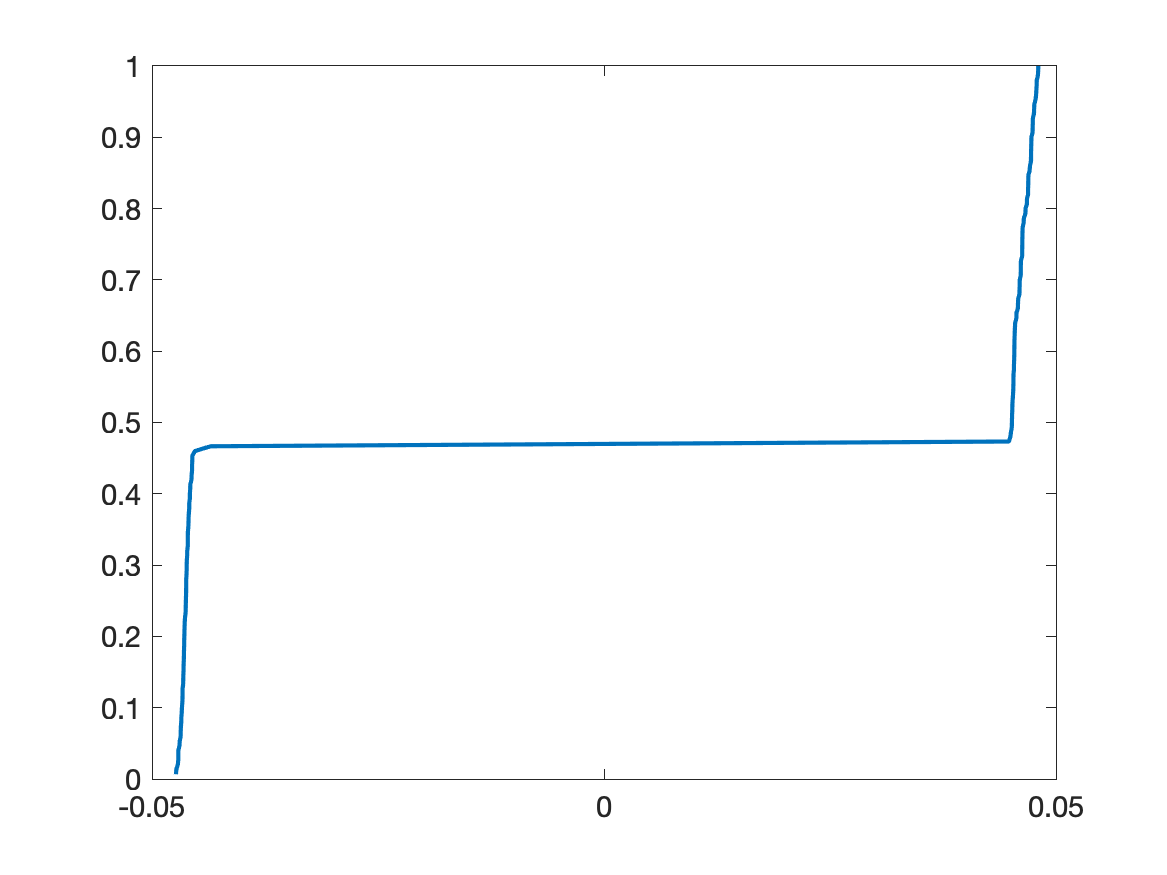}
	\caption{Sample paths (left) and cumulative distribution function (right) for Example \ref{ex:1}.}\label{fig:Example51}
\end{figure}

\end{example}

\begin{example}\label{ex:2}
Let $a(x)=x^\beta$, $x>0,$ where $\beta\in (0,1).$ We claim that we can continuously extend $a$ to the set $(-\infty,0]$ such that

\begin{enumerate}[label=(\alph*)]
\item $-a(-x)\leq a(x)<0$ for all $x<0$;

\item $\int_{-1}^0 \frac{1}{a(x)}dx=-\infty,$ i.e., the Osgood condition is not satisfied to the left of zero;

\item $P_{X_\ve}\Rightarrow \tfrac12 \delta_{\psi_+}+\tfrac12 \delta_{0}$ as $\ve\to0,$ i.e., the limit process with probability $\tfrac12$ moves like the maximal positive solution $\psi_+(t)=((1-\beta)t)^{\frac{1}{1-\beta}}, t\geq 0$
and stays at 0 forever with probability $\tfrac12$ too.
\end{enumerate}
This example is not covered by the theory in the previous sections, and should therefore be read as a demonstration of the complex behaviours that can occur in the zero noise limit. Note also that the zero-noise limit is \textit{not} only concentrated on the maximal solution $\psi_+$, but also on the trivial solution $\psi_-\equiv0$.

Before we construct the extension, let us provide some simple preliminary analysis.
If a function $a\from\mbR\to \mbR$ satisfies the linear growth condition, then the family $\{X_\ve\}_\ve$
is weakly relatively compact. If additionally the function $a$ is continuous, then
any limit point of  $\{X_\ve\}_\ve$ satisfies \eqref{eq:ode}. Both conditions
(a) and (b) yield that any solution to \eqref{eq:ode}, and hence any limit point of
$\{X_\ve\}_\ve$ has a form
\begin{equation}\label{eq:limit_sol}
X_0(t)=
\begin{cases}
0, & t\leq \tau\\
((1-\beta)(t-\tau))^{\frac{1}{1-\beta}},& t> \tau,
\end{cases}
\end{equation}
where $\tau\in[0,\infty].$ Our aim is to find an extension of $a$ such that
\begin{equation}\label{eq:tau_probab}
\Pr(\tau=0)= \Pr(\tau=\infty)=\tfrac12
\end{equation}
 for any limit point
$X_0$ having representation
\eqref{eq:limit_sol}.

Let $A=\cup_{k\geq 1}[-\frac{1}{2^k}, -\frac{1}{2^k}+\frac{1}{4^k}].$ Set
\begin{align*}
\tilde a(x) &\coloneqq \sign(x) a(|x|)\ind_{x\notin A}=
\begin{cases}
x^\beta, & x> 0\\
 -|x|^\beta,& x\leq 0,\ x\notin A \\
0,& x\leq 0,\ x\in A,
\end{cases}  \\
\bar a(x) &\coloneqq \sign(x) a(|x|)=  \sign(x) |x|^\beta=
\begin{cases}
x^\beta, & x> 0\\
 -|x|^\beta ,& x\leq 0.
\end{cases}
\end{align*}
Define $a$ on $(-\infty,0)$ to be any negative, continuous function such that $\int_{-\delta}^0 \frac{1}{a(x)}dx=-\infty$ for any $\delta>0$, and
\[
\bar  a(x)\leq a(x) \leq \tilde  a(x) \text{ for all } x\in(-\infty,0).
\]
It is clear that there exists a function $a$ satisfying these properties.
Introduce the transformed process
\[
Y_\ve(t)\coloneqq \ve^{\frac{-2}{1+\beta}}  X_\ve\bigl(\ve^{\frac{2(1-\beta)}{1+\beta}}t\bigr).
\]
It can be seen (see \cite{PilipenkoProske2018} for a more general case) that
\begin{equation}\label{eq:1525}
d   Y_\ve(t)=  a_\ve( Y_\ve(t))dt + d  w_\ve(t),
\end{equation}
where $w_\ve(t)= \ve^{\frac{-(1-\beta)}{1+\beta}} w\bigl(\ve^{\frac{2(1-\beta)}{1+\beta}}t\bigr)$ is a Wiener process, and
\begin{equation}
\label{eq:1526}
  a_\ve(y)= \ve^{\frac{-2\beta}{1+\beta}}a\bigl(\ve^{\frac{ 2}{1+\beta}}y\bigr).
\end{equation}
Notice that $a_\ve(y)=a(y)$ for all $y\in (0,\infty)$ and for all $y<0$ such that $\ve^{\frac{ 2}{1+\beta}}y\notin A$. For all other $y<0$ we have the inequality $-|y|^\beta\leq a(y)<0$, by the choice of the function $a$.
We have convergence  $ a_\ve(y)\to \bar a(y)=\sign(y) |y|^\beta$ in Lebesgue measure on any interval $y\in[-R,R]$.
Observe also that
\[
\int_0^x a_\ve(y) dy \geq \int_0^x \hat a(y) dy \qquad \forall \ve>0,\ \forall x<0
\]
where
\[
\hat a(x)=
\begin{cases}
0,\ & x\in [-\tfrac32\cdot 2^n, -2^n ] \text{ for some }  n\in\mbZ \\
-|x|^\beta & \text{otherwise.}
\end{cases}
\]
In particular, the last estimate yields
\[
\sup_{\ve\in(0,1]}\lim_{R\to+\infty}
  \int_{-\infty}^{-R} \exp\biggl(-2\int_0^x a_\ve(y)\,dy\biggr) dx =0.
\]

Set
\[
\sigma^X_{\ve}(p)\coloneqq\inf\{t\geq 0 : X_\ve(t)=p\},
\]
\[
\sigma^Y_{\ve}(p)\coloneqq\inf\{t\geq 0 : Y_\ve(t)=p\} .
\]
The observations above and formulas of Theorem \ref{thm:exit_time} yield that for any $R>0$, and for any sequences $\{R^\pm_\ve\}$ such that $\lim_{\ve\to0}R^\pm_\ve=\pm\infty$ we have
\begin{align*}
&\nqquad\lim_{\ve\to0}\Pr\Bigl(\sigma^Y_{\ve}(R) < \sigma^Y_{\ve}(-R) \mid  Y_\ve(0)=0\Bigr) =
\lim_{\ve\to0}\Pr\Big( \sigma^Y_{\ve}(-R)< \sigma^Y_{\ve}(R) \mid  Y_\ve(0)=0\Big) \\
={}&\lim_{\ve\to0}\Pr\Big( \sigma^Y_{\ve}(R^+_\ve)<  \sigma^Y_{\ve}(R^-_\ve) \mid  Y_\ve(0)=0\Big)
= \lim_{\ve\to0}\Pr\Big( \sigma^Y_{\ve}(R^-_\ve)< \sigma^Y_{\ve}(R^+_\ve) \mid  Y_\ve(0)=0\Big)\\
={}&\tfrac12.
\end{align*}
Hence, for any $\delta^\pm>0$ we have
\begin{equation}\begin{aligned}\label{eq:1566}
&\nqquad\lim_{\ve\to0}\Pr\Big(  \sigma^X_{\ve}\big(R \ve^{\frac{2}{1+\beta}}\big)
< \sigma^X_{\ve}\bigl(-R \ve^{\frac{2}{1+\beta}}\big) \mid  X_\ve(0)=0\Big)\\
={}&
\lim_{\ve\to0}\Pr\Big(  \sigma^X_{\ve}\bigl(-R \ve^{\frac{2}{1+\beta}}\big)<
 \sigma^X_{\ve}\big(R \ve^{\frac{2}{1+\beta}}\big) \mid  X_\ve(0)=0\Big) \\
={}&\lim_{\ve\to0}\Pr\Big(\sigma^X_{\ve}(\delta^+)< \sigma^X_{\ve}(\delta^-) \mid  X_\ve(0)=0\Big)
= \lim_{\ve\to0}\Pr\Big(\sigma^X_{\ve}(\delta^-)< \sigma^X_{\ve}(\delta^+) \mid X_\ve(0)=0\Big)\\
={}&\tfrac12.
\end{aligned}
\end{equation}
  Hence, if $X_0$ is a limit point of $\{X_\ve\}$ having representation
  \eqref{eq:limit_sol}, then $\Pr(\tau=\infty)\geq \tfrac12.$
It also follows from  Theorem \ref{thm:exit_time} that for any $R>0$
\[
\sup_{\ve>0}\Exp \Big(\sigma^Y_{\ve}(R)\wedge \sigma^Y_{\ve}(-R) \mid Y_\ve(0)=0\Big)<\infty.
\]
Thus,
\begin{equation}\label{eq:1575}
\sigma^X_{\ve}\big(R \ve^{\frac{2}{1+\beta}}\big)\wedge \sigma^X_{\ve}\bigl(-R \ve^{\frac{2}{1+\beta}}\big)
\overset{\Pr}\to 0 \qquad \text{as } \ve\to0
\end{equation}
if $X_{\ve}(0)=0.$

Let $\bar X_\ve $ be a solution to
\[
d \bar X_\ve(t) =\bar a\big(\bar X_\ve(t)\big)dt +\ve d w(t)
\]
and define
\[
\bar Y_\ve(t)\coloneqq \ve^{\frac{-2}{1+\beta}}\bar X_\ve\bigl(\ve^{\frac{2(1-\beta)}{1+\beta}}t\bigr).
\]
Then (cf.~\eqref{eq:1525}, \eqref{eq:1526})
\[
 d \bar Y_\ve(t)= \bar a(\bar Y_\ve(t))dt + d w_\ve(t).
\]
In particular, if $\bar X_\ve(0)= R \ve^{\frac{2}{1+\beta}}$ for all $\ve>0,$ where $R$ is a constant, then all processes $\bar Y_\ve$ have the same distribution independent of $\ve.$

Notice that for any $R>0$,
\begin{equation}\label{eq:1597}
 \Pr\Bigl(X_\ve(t) = \bar X_\ve(t), \ t\in [0, \sigma^X_{\ve}(0) ]  \mid X_\ve(0)= \bar X_\ve(0)= R \ve^{\frac{2}{1+\beta}}\Bigr) =1.
\end{equation}
 and
\begin{equation}\begin{aligned}\label{eq:1600}
p_R &\coloneqq  \Pr\Bigl(\sigma^X_{\ve}(0)=\infty \mid X_\ve(0)= R \ve^{\frac{2}{1+\beta}}\Bigr) \\
&= \Pr\Bigl(X_\ve(t)>0, t\geq 0 \mid X_\ve(0)= R \ve^{\frac{2}{1+\beta}}\Bigr)
\\
&=\Pr\Bigl(\bar X_\ve(t)>0, t\geq 0 \mid \bar X_\ve(0)= R \ve^{\frac{2}{1+\beta}}\Bigr) \\
&= \Pr\Bigl(\bar Y_\ve(t)>0 \mid \bar Y_\ve(0)= R\Bigr)\to 1 \qquad \text{as } R\to\infty.
\end{aligned}\end{equation}
It follows from \cite{PilipenkoProske2018}  that if $\bar X_\ve(0)= R \ve^{\frac{2}{1+\beta}}, \ve>0$,
then
\begin{equation}
\label{eq:1611}
\bar X_\ve \Rightarrow p_R\delta_{\psi_+}+(1-p_R)\delta_{\psi_-} \qquad \text{as } \ve\to0.
\end{equation}
Hence, \eqref{eq:1566}, \eqref{eq:1575}, \eqref{eq:1597}, \eqref{eq:1600},
   and \eqref{eq:1611} yield that for any limit point $X_0$
of $\{X_\ve\}$ we have $\Pr(\tau=0)\geq \tfrac12.$ This concludes the proof of the convergence $P_{X_\ve}\Rightarrow \tfrac12 \delta_{\psi_+}+\tfrac12 \delta_{0}$ as $\ve\to0.$

Figure \ref{fig:Example52} shows the same type of simulation as in Example \ref{ex:1}. From the figure it is clear that for small $\ve$, the samples split in two groups of equal size, one moving along $\psi_+$ and the other remaining around the origin. As the noise decreases, the left-going samples concentrate around the trivial solution $X\equiv0$.
\begin{figure}
	\includegraphics[width=0.49\linewidth]{./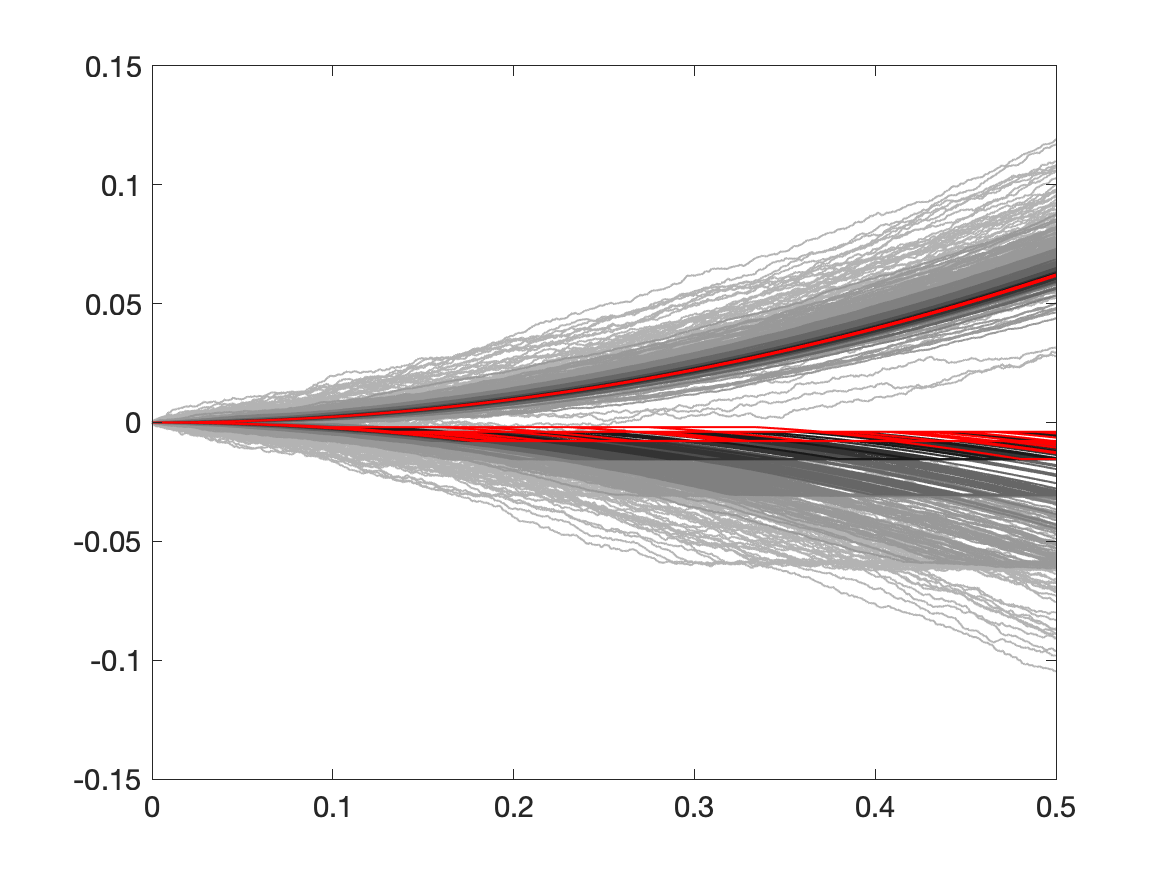}
	\includegraphics[width=0.49\linewidth]{./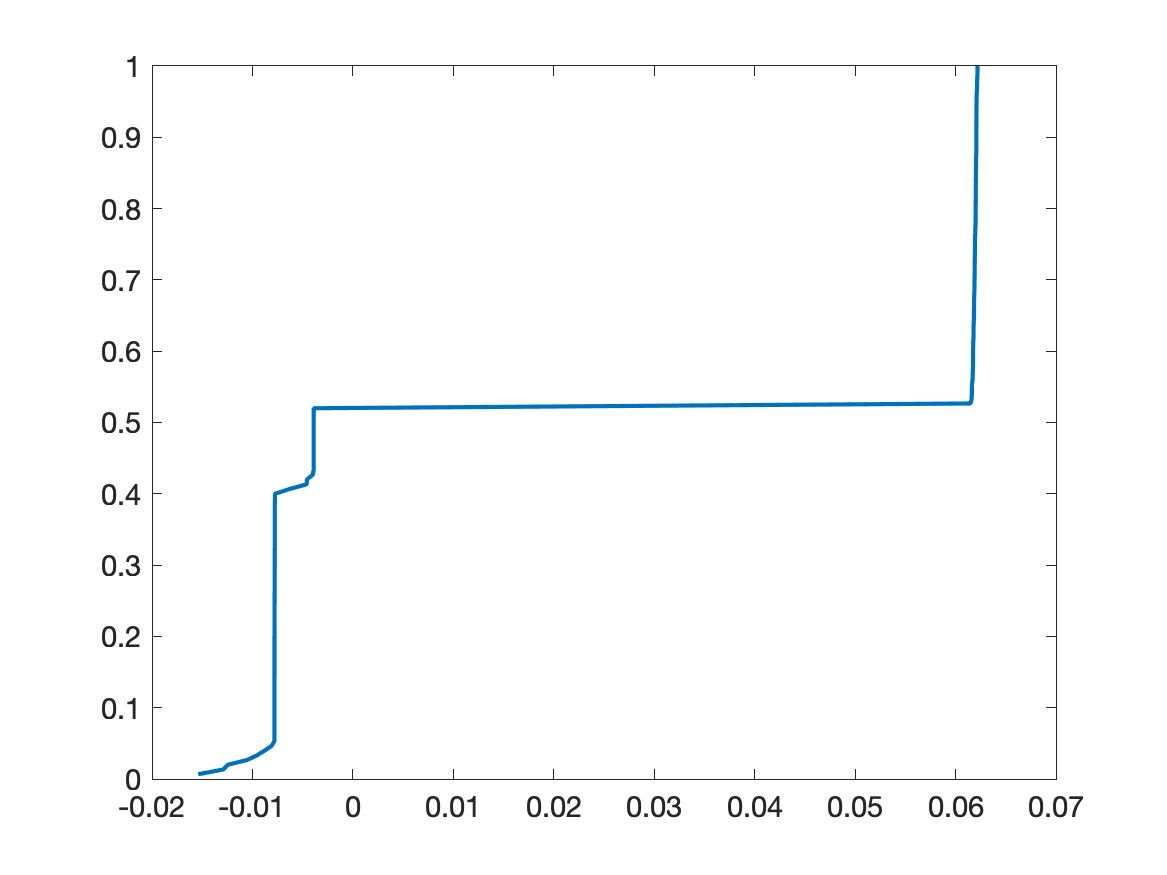}
	\caption{Sample paths (left) and cumulative distribution function (right) for Example \ref{ex:2}.}\label{fig:Example52}
\end{figure}
\end{example}

\appendix
\section{Appendix}\label{app:comparisonprinciple}
\begin{proof}[Proof of Theorem \ref{thm:comparisonThm}]
For a sequence of numbers $0<\ve_n\to0$, let $a_{i,n} = a_i*\omega_{\ve_n}$, where $\omega_\ve(z) = \ve^{-1}\omega\big(z\ve^{-1}\big)$ and $\omega\in C_c^\infty(\mbR)$ is a nonnegative mollifier. Let $X_{i,n}$ be the unique solution of
\begin{equation}
dX_{i,n} = \drift_{i,n}( X_{i,n}) dt + dW, \qquad i = 1,2,\ n\in\mbN. \label{eq:comparison}
\end{equation}
For the smoothened drift functions it still holds \( \drift_{1,n} \leq \drift_{2,n}\). Therefore, it follows from the classic comparison theorem that $X_{1,n} \leq X_{2,n}$ (see e.g.~the comparison theorem in \cite{IkedaWatanabe1981}).

The application  of Theorem \ref{thm:convergenceSDE_Thm} completes the proof in the case when $a_1, a_2\in L^\infty(\mbR)$. If $a_1, a_2$ are only locally bounded, then we
approximate $X_1,X_2$  by solutions to SDEs with drifts $a_{i,M}\coloneqq a_i\ind_{[-M,M]}$. It follows from \cite[Remark 3b, p.~145]{Zvonkin1974} that
\[
\Pr\bigl(X_i(t)=X_{i,M}(t) \;\forall\ t\leq \tau_{i,M}\bigr)=1, \qquad i=1,2,
\]
where $\tau_{i,M}= \inf\{t\geq 0 : |X_i(t)|\geq M\}.$ We have already proved that $X_{1,M}(t)\leq X_{2,M}(t)$ almost surely. This completes the proof of the theorem.
\end{proof}

\begin{proof}[Proof of Lemma \ref{lem:approxidentity}]
We assume that $f$ is positive; the negative case follows similarly.
Denote $B(z)\coloneqq\int_{\alpha}^z f(u)\,du$. Then $B$ is absolutely continuous and invertible, and since $B'(z)=f(z)>0$ for a.e.~$z$, the inverse $B^{-1}$ is also absolutely continuous (see e.g.~\cite[Exercise 5.8.52]{Bogachev2007}). Hence, we can write
\begin{align*}
g_\ve(y) &= \int_{y}^\beta e^{-(B(z)-B(y))/\ve^2}\frac{B'(z)}{\ve^2} g(z)\,dz \\
&= \int_{B(y)}^{B(\beta)} \frac{e^{-(v-B(y))/\ve^2}}{\ve^2}g\big(B^{-1}(v)\big)\,dv
\end{align*}
(where we made the change of variables $v=B(z)$). The function $[0,\infty)\ni v \mapsto \frac{e^{-v/\ve^2}}{\ve^2}$ is an approximate identity and therefore
\[
g_\ve(y) \to g(B^{-1}(B(y))) = g(y) \qquad \text{as } \ve \to 0
\]
in $L^1((\alpha,\beta))$, and pointwise whenever $v=B(y)$ is a Lebesgue point for $v \mapsto g\big(B^{-1}(v)\big)$; see e.g.~\cite[Theorems 8.14, 8.15]{Folland1999}. But $B$ and $B^{-1}$ are absolutely continuous, so these points coincide with the Lebesgue points for $g$.
\end{proof}

\section*{Acknowledgements}
U.~S.~Fjordholm was partially supported by the Research Council of Norway project \textit{INICE}, project no.~301538. A.~Pilipenko acknowledges the support by the National Research Foundation of Ukraine (project 2020.02/0014 ``Asymptotic regimes of perturbed random walks: on the edge of modern and classical probability'') and the Senter for internasjonalisering av utdanning (SIU), within the project Norway--Ukrainian Cooperation in Mathematical Education, project number CPEA-LT-2016/10139.


\begin{thebibliography}{99}

\bibitem{AttanasioFlandoli2009}
S.~Attanasio and F.~Flandoli.
\newblock Zero-noise solutions of linear transport equations without
  uniqueness: an example.
\newblock {\em Comptes Rendus Mathematique}, 347(13-14):753--756, 2009.

\bibitem{BaficoBaldi1982}
R.~Bafico and P.~Baldi.
\newblock {Small random perturbations of Peano phenomena}.
\newblock {\em Stochastics}, 6(3-4):272--292, 1982.

\bibitem{Billingsley1999}
P.~Billingsley.
\newblock {\em Convergence of Probability Meaures}.
\newblock Wiley Series in Probability and Statistics. John Wiley \& Sons, INC.,
  2nd edition edition, 1999.

\bibitem{Binding1979}
P.~Binding.
\newblock The differential equation $\dot{x} = f \circ x$.
\newblock \textit{J. Differ. Equations} 31(2):183--199, 1979.

\bibitem{BTG}  Bingham, N. H., Goldie, C. M., Teugels, J. L.  (1989).
Regular variation (No. 27). Cambridge university press.

\bibitem{Bogachev2007}
V.~I. Bogachev.
\newblock {\em Measure Theory Volume I}.
\newblock Springer-Verlag, 2007.

\bibitem{BuckdahnOuknineQuincampoix2008}
R.~Buckdahn, Y.~Ouknine, and M.~Quincampoix.
\newblock On limiting values of stochastic differential equations with small
  noise intensity tending to zero.
\newblock {\em Bulletin des Sciences Mathematiques}, 133(3):229--237, 2009.

\bibitem{DelarueFlandoli2014}
F.~Delarue and F.~Flandoli.
\newblock {The transition point in the zero noise limit for a 1D Peano
  example}.
\newblock {\em Discrete \& Continuous Dynammical Systems - A},
  34(10):4071--4083, 2014.

\bibitem{DelarueMaurelli2020}
M.~Maurelli F.~Delarue.
\newblock Zero noise limit for multidimensional sdes driven by a pointy
  gradient.
\newblock {\em arXiv preprint}, arXiv:1909.08702, 2019.

\bibitem{Fjordholm2018}
U.~S. Fjordholm.
\newblock Sharp uniqueness conditions for one-dimensional, autonomous ordinary
  differential equations.
\newblock {\em Comptes Rendus Mathematique}, 356(9):916--921, 2018.

\bibitem{Folland1999}
G.~B. Folland.
\newblock {\em Real Analysis}.
\newblock Pure and Applied Mathematics. John Wiley \& Sons, INC., 2nd edition
  edition, 1999.


\bibitem{GradinaruHerrmannRoynette2001}
M.~Gradinaru, S.~Herrmann, and B.~Roynette.
\newblock A singular large deviations phenomenon.
\newblock {\em Annales de l'Institute Henri Poincar\'e Probabilit\'es et
  Statistiques}, 37(5):555--580, 2001.

\bibitem{HerrmannTugaut2010}
S.~Herrmann and J.~Tugaut.
\newblock Stationary measures for self-stabilizing processes: asymptotic
  analysis in the small noise limit.
\newblock {\em Electronic Journal of Probability}, 15(69):2087--2116, 2010.

\bibitem{HerrmannTugaut2012}
S.~Herrmann and J.~Tugaut.
\newblock Self-stabilizing processes: Uniqueness problem for stationary
  measures and convergence rate in the small noise limit.
\newblock {\em ESAIM: Probability and Statistics}, 16:277--305, 2012.

\bibitem{HerrmannTugaut2014}
S.~Herrmann and J.~Tugaut.
\newblock Mean-field limit versus small-noise limit for some interacting
  particle systems.
\newblock {\em arXiv e-prints}, page arXiv:1409.1159, Sep 2014.

\bibitem{IkedaWatanabe1981}
N.~Ikeda and S.~Watanabe.
\newblock {\em Stochastic Differential Equations and Diffusion Processes}.
\newblock Number~24 in North-Holland Mathematical Library. North-Holland
  Publishing Company, 1981.

\bibitem{KulikPilipenko2020}
Kulik, A., Pilipenko, A. On Regularization by a Small
 Noise of Multidimensional Odes with Non-Lipschitz Coefficients.
Ukr. Math. J. 72, 1445-1481 (2021). https://doi.org/10.1007/s11253-021-01865-7

\bibitem{Mathieu1994}
Pierre Mathieu.
\newblock Zero white noise limit through Dirichlet forms, with application to diffusions in a random medium.
\newblock {\em Probability Theory and Related Fields}, 99:549--580, 1994.

\bibitem{PavlyukevichPilipenko2020} Pavlyukevich, I.,  Pilipenko, A. (2020).
Generalized Peano problem with Levy noise.
Electronic Communications in Probability, 25.

\bibitem{Pilipenko2013}
A.~Pilipenko.
\newblock On strong existence and continuous dependence for solutions of  one-dimensional stochastic equations with additive L\'evy noise.
\newblock {\em Theory of Stochastic Processes}, 18 (34)(2):77--82, 2012.

\bibitem{PilipenkoProske2015}
A.~Pilipenko and F.~N. Proske.
\newblock On a selection problem for small noise perturbation in
  multidimensional case.
\newblock {\em Stochastics and Dynamics}, 18(6), 2018.

\bibitem{PilipenkoProske2018}
A.~Pilipenko and F.~N. Proske.
\newblock On perturbations of an ode with non-Lipschitz coefficients by a small
  self-similar noise.
\newblock {\em Statistics \& Probability Letters}, 132:62--73, 2018.

\bibitem{PilipenkoProske2021} Pilipenko, A.,   Proske, F. N. (2021). Small Noise Perturbations in Multidimensional Case. arXiv preprint arXiv:2106.09935.

\bibitem{RevuzYor1999}
D.~Revuz and M.~Yor.
\newblock {\em Continuous Martingales and Brownian Motion}.
\newblock Number 293 in A Series of Comprehensive Studies in Mathematics.
  Springer-Verlag, 3rd edition 3rd corrected printing edition, 2005.

\bibitem{Rud83}
W. Rudin.
\newblock {Well-Distributed Measurable Sets}.
\newblock {\em The American Mathematical Monthly}, 90(1):41–42, 1983.

\bibitem{Trevisian13}
D.~Trevisian.
\newblock Zero noise limits using local times.
\newblock \textit{Electron. Commun. Probab.}, 18(31):1--7, 2013.

\bibitem{Veretennikov1981b}
A.~Yu.~Veretennikov.
Approximation of ordinary differential equations by stochastic differential equations.
\textit{Matematicheskie Zametki} 33(6):929--932, 1981.
Translation in \textit{Mathematical notes of the Academy of Sciences of the USSR} 33:476--477, 1983.

\bibitem{Veretennikov1981}
A.~Yu.~Veretennikov.
On strong solutions and explicit formulas for solutions of stochastic integral equations.
\textit{Mathematics of the USSR-Sbornik},
39(3):387--403, 1981.

\bibitem{Zvonkin1974}
A.~K. Zvonkin.
\newblock A transformation of the phase space of a diffusion process that
  removes the drift.
\newblock {\em Mathematics of the USSR-Sbornik}, 22(1):129--149, 1974.

\end{thebibliography}
%

\end{document}